\numberwithin{equation}{section}
\theoremstyle{plain}
\newtheorem{theorem}{Theorem}[section]
\newtheorem{lemma}[theorem]{Lemma}
\newtheorem{proposition}[theorem]{Proposition}
\newtheorem{corollary}[theorem]{Corollary}
\theoremstyle{remark}
\newtheorem{remark}{Remark}
\newtheorem{definition}[theorem]{Definition}
  \renewcommand{\leq}{\leqslant}
\renewcommand{\geq}{\geqslant}
\newcommand{\f}[1]{\mathfrak{#1}}
\newcommand{\ip}[2]{\left\langle #1, #2\right\rangle }
\newcommand{\no}[1]{\left\Vert #1\right\Vert  }
\newcommand{\ed}{\overset{{\scriptscriptstyle \mathrm{d}}}{=}}
\begin{document}

\begin{frontmatter}
\title{Symmetric Stable Processes on Amenable Groups}
\runtitle{S$\alpha$S Processes on Amenable Groups}

\begin{aug}

\author{Nachi Avraham-Re'em}

\address{Einstein Institute of Mathematics,
The Hebrew University of Jerusalem,\\ \href{mailto:nachi.avraham@gmail.com}{nachi.avraham@gmail.com}}

\end{aug}

\begin{abstract}
We show that if $G$ is a countable amenable group, then every stationary non-Gaussian symmetric $\alpha$-stable ($S\alpha S$) process indexed by $G$ is ergodic if and only if it is weakly-mixing, and it is ergodic if and only if its Rosiński minimal spectral representation is null. This extends previous results for $\mathbb{Z}^d$, and answers a question of P. Roy on discrete nilpotent groups to the extent of all countable amenable groups. As a result, we construct on the Heisenberg group and on many Abelian groups, for all $\alpha\in\left(0,2\right)$, stationary $S\alpha S$ processes that are weakly-mixing but not strongly-mixing.
\end{abstract}

\begin{keyword}[class=MSC]
\kwd{60G52}
\kwd{60G10}
\kwd{37A40}
\kwd{37A50}
\kwd{43A07}
\end{keyword}

\begin{keyword}
\kwd{stable process}
\kwd{stationary process}
\kwd{spectral representation}
\kwd{non-singular group action}
\kwd{amenable group}
\end{keyword}

\end{frontmatter}

\setcounter{tocdepth}{1}
\tableofcontents

\section{Introduction and main results}

Let $G$ be a countable group. A real-valued stochastic process $\mathbf{X}=\left(X_{g}\right)_{g\in G}$ indexed by $G$ (henceforth, a {\it $G$-process}) is called {\it symmetric $\alpha$-stable} (henceforth, $S\alpha S$) for some $\alpha\in\left(0,2\right]$, if the distribution of every linear combination of the coordinates of $\mathbf{X}$ has an $S\alpha S$ distribution. Recall that a random variable $X$ has the $S\alpha S$ distribution for $\alpha\in\left(0,2\right]$ if, for some $\sigma>0$, 
$$\mathbb{E}\left(\exp\left(i\theta X\right)\right)=\exp\left(-\sigma^{\alpha}\left|\theta\right|^{\alpha}\right),\quad \theta\in\mathbb{R}.$$
A $G$-process is (left) {\it stationary} if $\left(X_g\right)_{g\in G}\ed\left(X_{h^{-1} g}\right)_{g\in G}$ for every $h\in G$, where $\ed$ means equality in distribution.

It was shown by Podgórski \& Weron \cite{podgorski, podwer} that, for stationary $S\alpha S$ processes indexed by $\mathbb{Z}$ and $\mathbb{R}$, ergodicity and weak-mixing are equivalent. See Definitions \ref{Definition: Ergodicity} and \ref{Definition: Weak-mixing}. This was generalized to $\mathbb{Z}^d$ and $\mathbb{R}^d$ by Wang, Roy \& Stoev \cite{wangroystoev}. For non-Abelian groups it was unknown and this brings us to the first result.

\begin{theorem}
\label{Theorem: Ergodicity iff Weak-Mixing}
Let $G$ be a countable amenable group. A stationary real-valued $S\alpha S$ $G$-process is ergodic if and only if it is weakly-mixing.
\end{theorem}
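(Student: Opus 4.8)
The plan is to pass to the non-singular dynamics behind the process and reduce both properties to a single intrinsic condition on a group action. By Rosiński's structure theorem, a stationary non-Gaussian $S\alpha S$ process ($\alpha\in(0,2)$) admits a minimal spectral representation $X_g\ed\int_S f_g\,dM$, where $M$ is an $S\alpha S$ random measure on a standard $\sigma$-finite space $(S,\mu)$ and $f_g=U_gf$ for a group $\{U_g\}_{g\in G}$ of $L^\alpha$-isometries $U_g h=c_g\cdot\big(d(\mu\circ\phi_g)/d\mu\big)^{1/\alpha}\cdot(h\circ\phi_g)$ attached to a non-singular $G$-action $\{\phi_g\}$ and a $\{\pm1\}$-valued cocycle $\{c_g\}$. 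Since weak-mixing always implies ergodicity, only the converse needs proof. First I would record the spectral form of the two hypotheses: testing both against the total family of complex exponentials $e^{i\sum_j\theta_jx_{g_j}}$ and using stationarity, each reduces to a Følner--Cesàro statement about the interaction functionals
\[
I_g(A,B)=\int_S\Big(|A|^\alpha+|B|^\alpha-|A-B|^\alpha\Big)\,d\mu,
\]
where $A$ is a fixed finite combination of the $f_h$ and $B$ a $g$-translate of such a combination. Ergodicity becomes $\frac1{|F_n|}\sum_{g\in F_n}I_g\to0$ (signed), while weak-mixing becomes $\frac1{|F_n|}\sum_{g\in F_n}|I_g|\to0$.

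For $\alpha\in(0,1]$ this equivalence is essentially formal: subadditivity of $t\mapsto|t|^\alpha$ forces $I_g\ge0$, so the signed average dominates the absolute one, and a short boundedness argument (using $|e^{x}-1|\le C|x|$ on the bounded range of the exponents) closes the gap. The entire difficulty lies in $\alpha\in(1,2)$, where $I_g$ genuinely changes sign: for nearly disjoint $A,B$ its leading part is a \emph{covariation} term $\int|A|^{\alpha-1}\mathrm{sgn}(A)\,B\,d\mu$, which is first order and signed. Differentiating the ergodicity average at $B=0$ shows only that the Cesàro mean of these covariations vanishes \emph{with its sign}, which does not bound the mean of their absolute values; one checks that no pointwise inequality between $|I_g|$ and a nonnegative overlap functional can repair this. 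This signed-versus-absolute gap is the main obstacle, and it is precisely why the theorem is non-trivial.

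To overcome it I would prove the companion dynamical criterion and then exploit a doubling trick. The first step is the characterization announced in the abstract: the process is ergodic if and only if its minimal representation is \emph{null}, where nullness is an intrinsic property of $\{\phi_g\}$ expressed as the Følner--Cesàro vanishing of a manifestly \emph{nonnegative} overlap functional built from the Radon--Nikodym cocycle, obtained from the Hopf conservative--dissipative decomposition together with the positive--null dichotomy of the conservative part. This is exactly where amenability enters: one needs mean and ratio ergodic theorems for non-singular $G$-actions along a Følner sequence, replacing the lattice averaging used for $\mathbb{Z}^d$ by Wang, Roy and Stoev. The gain is that nullness is a positive condition, insensitive to the covariation signs that obstruct the elementary argument.

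The equivalence then follows by doubling. Weak-mixing of $\mathbf{X}$ is equivalent to ergodicity of $\mathbf{X}\times\mathbf{X}'$ with $\mathbf{X}'$ an independent copy, and this product is again a stationary non-Gaussian $S\alpha S$ process, carried by the disjoint union $(S\sqcup S,\mu\sqcup\mu)$ under the diagonal action $\phi_g\sqcup\phi_g$ and isometries $U_g\oplus U_g$. As $\mathbf{X}$ is a factor of $\mathbf{X}\times\mathbf{X}'$, ergodicity of the product gives ergodicity of $\mathbf{X}$ for free. For the converse I would show that the direct-sum action is null exactly when $\{\phi_g\}$ is null---nullness is stable under taking the disjoint union of an action with itself, and this stability descends to the minimal sub-representation generated by the product---so that the chain $\mathbf{X}$ ergodic $\iff\{\phi_g\}$ null $\iff\phi_g\sqcup\phi_g$ null $\iff\mathbf{X}\times\mathbf{X}'$ ergodic $\iff\mathbf{X}$ weakly-mixing closes. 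The points I expect to be most delicate are the amenable ergodic theorems underpinning the nullness criterion and the verification that nullness is inherited by the minimal representation of the doubled process; the non-Gaussian hypothesis is essential throughout, since for $\alpha=2$ the covariation is a genuine inner product and the dichotomy degenerates.
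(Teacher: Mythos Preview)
Your proposal has a circularity gap. The chain hinges on ``$\mathbf{X}$ ergodic $\iff$ minimal representation null'' as its opening link, but you give no argument for the direction ``ergodic $\Rightarrow$ null'' that is independent of Theorem~\ref{Theorem: Ergodicity iff Weak-Mixing}. In the paper that implication (Proposition~\ref{Proposition: ErgodicThenNull}) is obtained \emph{by first invoking} Theorem~\ref{Theorem: Ergodicity iff Weak-Mixing} to upgrade ergodicity to weak-mixing, and only then showing that a positive representation violates the weak-mixing criterion of Theorem~\ref{Theorem: Weak-mixing Characterization}. Your remark that nullness is a ``positive condition'' explains why the \emph{target} statement is unsigned, but the \emph{hypothesis} (ergodicity) is still the signed Ces\`aro condition you yourself identified as the obstruction for $\alpha\in(1,2)$; passing from one to the other is precisely the difficulty, and nothing in your outline bridges it. Note also that once ``ergodic $\Rightarrow$ null'' and ``null $\Rightarrow$ weakly-mixing'' are both available, the doubling construction is superfluous: ergodic $\Rightarrow$ null $\Rightarrow$ weakly-mixing already finishes without ever forming $\mathbf{X}\times\mathbf{X}'$, so the product argument adds no logical content.

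The paper's route is entirely different and bypasses the Rosi\'nski representation altogether. Its engine is harmonic-analytic: for each $\phi\in L^{\alpha}\left(\mathbf{X}\right)$ the function $g\mapsto\exp\big(-\no{\lambda_g\phi-\phi}_{\alpha}^{\alpha}\big)$ is \emph{positive definite} on $G$ (Proposition~\ref{Proposition: Is Positive Definite}), and the paper shows (Theorem~\ref{Theorem: P.D. is F.C.}, via the Ryll--Nardzewski fixed-point theorem together with a Dye--Douglass criterion) that positive definite functions on countable amenable groups are F\o lner-convergent \emph{uniformly over all translates}. This yields the ergodicity criterion of Corollary~\ref{Corollary: Ergodicity}. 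The signed-to-absolute passage is then handled by the scaling $\phi\mapsto c^{1/\alpha}\phi$, which multiplies the exponent $2\no{\phi}_{\alpha}^{\alpha}-\no{\lambda_g\phi-\phi}_{\alpha}^{\alpha}$ by $c$, combined with the elementary Podg\'orski--Weron Lemma~\ref{Lemma: Podgórski-Weron}: if $E_{F_N}\big(\exp(c\psi)\big)\to1$ for \emph{every} $c>0$, then $E_{F_N}\big|\exp(\psi)-1\big|\to0$. This converts the ergodicity limit into the weak-mixing limit on the test functions $e^{i\phi}$, and Stone--Weierstrass extends it to all of $L^{2}\left(\mathbf{X}\right)$. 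No non-singular ergodic theory enters this proof, and in the paper's logical order Theorems~\ref{Theorem: weak-mixing null} and~\ref{Theorem: Main Theorem} are derived \emph{afterwards}, using Theorem~\ref{Theorem: Ergodicity iff Weak-Mixing} as input.
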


In his seminal work \cite{rosinski1995}, Rosiński, following Hardin \cite{hardin}, established a bridge between the theory of stationary non-Gaussian $S\alpha S$ $G$-processes and non-singular ergodic theory. By {\it non-Gaussian} we refer to the case $\alpha\neq 2$. The interplay between the theories has been found to be rich and fruitful, and it opened the door to the study of probabilistic properties of stationary $S\alpha S$ processes using non-singular ergodic theory. See \cite{rossam, rosinski2000, roy2007, roy2012, bhattroy, roy2020} and the references in \cite{roy2020}. A notable instance of such an interplay is Samorodnitsky's characterization of ergodicity and weak-mixing in terms of the {\it Rosiński minimal spectral representation} of the process, which we introduce now.

It was shown in \cite{kuelbs, schilder} that for every $S\alpha S$ $G$-process $\mathbf{X}$ there exists a standard σ-finite measure space $\left(\Omega,\mu\right)$ and a collection $\left(f_{g}\right)_{g\in G}\subset L^{\alpha}\left(\mu\right)$ such that
$$\mathbf{X}\ed\left(\int_{\Omega}f_{g}dM_{\alpha}\right)_{g\in G}.$$
Here $M_{\alpha}$ is an $S\alpha S$ random measure controlled by $\mu$. Such a representation will be referred to as a {\it spectral representation} of $\mathbf{X}$. See \cite[\textsection 3]{samtakku} for details on $S\alpha S$ random measures and stochastic integrals. Following Hardin \cite{hardin}, such a spectral representation is typically chosen to be {\it minimal} as in Definition \ref{Definition: Spectral Representation}.

Recall that a ({\it non-singular}) {\it $G$-system} is a group action $G\overset{\f a}{\curvearrowright}\left(\Omega,\mu\right)$ of $G$ on a standard σ-finite measure space $\left(\Omega,\mu\right)$ by {\it non-singular automorphisms}. Namely, each mapping $\f a_{g}:\Omega\to\Omega$ is a bi-measurable bijection which is non-singular in the sense that the measures $\mu\circ\f a_{g}$ and $\mu$ are mutually absolutely continuous. We will consider $G$-systems $G\overset{\f a^{\left(1\right)}}{\curvearrowright}\left(\Omega_{1},\mu_{1}\right)$ and $G\overset{\f a^{\left(2\right)}}{\curvearrowright}\left(\Omega_{2},\mu_{2}\right)$ as isomorphic within the category of (non-singular) $G$-systems, if there exists a bi-measurable bijection $F:\Omega_{1}\to\Omega_{2}$, defined up to zero measure sets, such that (1) $F\circ\f a_{g}^{\left(1\right)}=\f a_{g}^{\left(2\right)}\circ F$ almost surely for every $g\in G$ and (2) the measures $\mu_{2}$ and $\mu_{1}\circ F^{-1}$ on $\Omega_{2}$ are mutually absolutely continuous.

Rosiński showed that the minimal spectral representation of every real-valued stationary non-Gaussian $S\alpha S$ $G$-process $\mathbf{X}$ is of the form
\begin{equation}
    \label{eq: Rosinski Representation}
    \mathbf{X}\ed \left(\intop_{\Omega}c_{g}\sqrt[\alpha]{\frac{d\mu\circ\f a_{g}}{d\mu}}f_0\circ\f a_{g} dM_{\alpha}\right)_{g\in G},
\end{equation}
for some $G$-system $G\overset{\f a}{\curvearrowright}\left(\Omega,\mu\right)$, a cocycle $c:G\times\Omega\to\left\{+1,-1\right\}$ and a special function $f_0\in L^{\alpha}\left(\mu\right)$. Furthermore, this representation is unique up to an isomorphism of $G$-systems. See Theorem \ref{Theorem: Rosiński} below. We will refer to the (minimal) spectral representation as in \eqref{eq: Rosinski Representation} by {\it Rosiński (minimal) spectral representation}.

Given a $G$-system $G\overset{\f a}{\curvearrowright}\left(\Omega,\mu\right)$ and a fixed function $f_0\in L^{\alpha}\left(\mu\right)$, one can define a stationary $S\alpha S$ $G$-process via
\begin{equation}
    \label{eq: represeted process}
    \mathbf{X}:=\left(\intop_{\Omega}\sqrt[\alpha]{\frac{d\mu\circ\f a_{g}}{d\mu}}f_0\circ\f a_{g} dM_{\alpha}\right)_{g\in G},
\end{equation}
where $M_{\alpha}$ is an $S\alpha S$ random measure controlled by $\mu$. However, while here we are given a spectral representation of $\mathbf{X}$, checking its minimality is difficult (cf. \cite{rosinski1995,rosinski2006}) and for that purpose the following theorem is convenient. A $G$-system $G\overset{\f a}{\curvearrowright}\left(\Omega,\mu\right)$ is said to be {\it null} if it has no absolutely continuous invariant probability measure (a.c.i.p.), that is a Borel probability measure $\nu$ on $\Omega$ which is both: absolutely continuous with respect to $\mu$ and $\f a$-invariant.

\begin{theorem}
\label{Theorem: weak-mixing null}
Let $G$ be a countable amenable group and $G\overset{\f a}{\curvearrowright}\left(\Omega,\mu\right)$ a null $G$-system. Then every stationary $S\alpha S$ $G$-process given by \eqref{eq: represeted process} is weakly-mixing.
\end{theorem}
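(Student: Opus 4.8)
The plan is to read off weak-mixing directly from the explicit finite-dimensional characteristic functions of $\mathbf X$ and to reduce the whole statement to one ergodic-theoretic fact about the transfer operator of the null system $\f a$. Write $f_g=c_g(d\mu\circ\f a_g/d\mu)^{1/\alpha}f_0\circ\f a_g$ for the spectral kernel in \eqref{eq: represeted process} and recall that, by the definition of the $S\alpha S$ stochastic integral, $\mathbb E\exp(i\sum_j\theta_jX_{g_j})=\exp(-\int_\Omega|\sum_j\theta_jf_{g_j}|^\alpha\,d\mu)$ for all $\theta_j\in\mathbb R$, $g_j\in G$. Since these exponentials span a dense subspace of the $L^2$ of the path space, weak-mixing of $\mathbf X$, viewed as a measure-preserving $G$-system, is equivalent to the vanishing along a Følner sequence $(F_n)$ of the Cesàro averages of the decorrelations between two exponentials built from blocks $\{a_j\}$ and $\{gb_k\}$. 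By the displayed formula this decorrelation is controlled by $E(g):=\int_\Omega(|F_1|^\alpha+|F_2^{(g)}|^\alpha-|F_1+F_2^{(g)}|^\alpha)\,d\mu$, with $F_1=\sum_j\theta_jf_{a_j}$, $F_2^{(g)}=\sum_k\eta_kf_{gb_k}$; as $|x+y|^\alpha-|x|^\alpha-|y|^\alpha$ is supported on $\{x\neq0\}\cap\{y\neq0\}$ and dominated there by an appropriate power of $|x|\wedge|y|$, $|E(g)|$ is bounded by a finite sum of pairwise overlap integrals $\int_\Omega(|f_a|\wedge|f_{gb}|)^\alpha\,d\mu$ (with the standard $1<\alpha<2$ modification).

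Next I would normalise $p:=|f_0|^\alpha/\no{f_0}_\alpha^\alpha$ and let $L_g$ be the Perron--Frobenius (transfer) operator of $\f a_g$, characterised by $(\f a_g^{-1})_*(\phi\,d\mu)=(L_g\phi)\,d\mu$, so that $|f_g|^\alpha\,d\mu=\no{f_0}_\alpha^\alpha\,(L_gp)\,d\mu$. Because the total mass of a minimum of two measures is preserved under a common non-singular bijection, each pairwise overlap equals $\int_\Omega p\wedge(L_hp)\,d\mu$ for $h$ in a fixed two-sided translate of $g$; since translations by fixed group elements preserve Følner averages of bounded sequences, every block contributes the same limit, and the entire theorem reduces to the single claim that $\frac1{|F_n|}\sum_{g\in F_n}\int_\Omega p\wedge(L_gp)\,d\mu\to0$. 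Here a short computation does the rest of the reduction: fixing $\varepsilon>0$ and choosing $\eta>0$ with $\int_{\{0<p<\eta\}}p\,d\mu<\varepsilon$, and setting $A:=\{p\geq\eta\}$ (a set of finite $\mu$-measure, as $p\in L^1$), one has $p\wedge L_gp\leq L_gp$ on $A$ and $p\wedge L_gp\leq p$ off $A$, whence
\[\frac1{|F_n|}\sum_{g\in F_n}\int_\Omega p\wedge(L_gp)\,d\mu\ \leq\ \int_A\bar p_n\,d\mu+\varepsilon,\qquad \bar p_n:=\frac1{|F_n|}\sum_{g\in F_n}L_gp.\]
Thus it suffices to prove that for every set $A$ of finite measure the local averaged mass $\int_A\bar p_n\,d\mu$ tends to $0$, i.e. the pushed-forward mass escapes every fixed finite-measure set on Følner average.

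The heart of the matter is the ergodic-theoretic dichotomy that I expect to be the main obstacle: a null amenable non-singular system makes $\int_A\bar p_n\,d\mu\to0$ for every $\phi\in L^1_+(\mu)$ (here $\phi=p$) and every finite-measure $A$. The easy direction is immediate --- an a.c.i.p. density $q$ satisfies $L_gq=q$, so $\bar q_n\equiv q$ and $\int_Aq\,d\mu\not\to0$ for suitable $A$ --- so the content is the contrapositive: if the local mass does not escape, then an a.c.i.p. exists. I would prove this by splitting $\Omega$ along the Hopf decomposition into its conservative and dissipative $G$-invariant parts. On the dissipative part the averaged mass is carried off to infinity by wandering, so it vanishes locally; on the conservative part, nullity forbids a finite invariant measure, and a mean/ratio ergodic theorem for amenable actions (the amenable Hopf--Chacon--Ornstein theorem) forces the normalised averages $\bar p_n$ to spread out and decay, whence $\int_A\bar p_n\,d\mu\to0$ for finite-measure $A$.

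The genuinely delicate point, and where both amenability and nullity are indispensable, is upgrading this decay to the local statement we need while controlling escape of mass in the non-reflexive space $L^1(\mu)$: one must rule out that the asymptotically $G$-invariant averages $\bar p_n$ concentrate on shrinking subsets of a fixed finite-measure set $A$. I would secure this by a uniform-integrability (Dunford--Pettis) argument, using the asymptotic invariance $\no{L_h\bar p_n-\bar p_n}_1\to0$ to extract, from any non-escaping subsequence, a non-zero absolutely continuous invariant density whose existence contradicts nullity; the conservative type-III case, where no $\sigma$-finite invariant measure is available and one must argue the escape directly (or pass to the Maharam extension), is the subtlest. Granting the dichotomy, the chain of reductions above yields $\frac1{|F_n|}\sum_{g\in F_n}|E(g)|\to0$, hence the averaged decorrelations vanish and $\mathbf X$ is weakly-mixing.
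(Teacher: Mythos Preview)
Your overall architecture is close to the paper's: both routes funnel weak-mixing through the Gross-type overlap condition and then into a single ergodic-theoretic statement about the Følner-averaged transfer operator $\bar p_n=\frac{1}{|F_n|}\sum_{g\in F_n}\widehat{\f a}_g\,|f_0|^\alpha$. Your reduction via the pointwise bound on $|x+y|^\alpha-|x|^\alpha-|y|^\alpha$ and the truncation $A=\{p\geq\eta\}$ is essentially the same splitting the paper performs in Proposition~\ref{Proposition: NullThenErgodic}; the paper just phrases it through Condition~(4) of Theorem~\ref{Theorem: Weak-mixing Characterization} and writes the splitting as $\{W_N\leq\eta\}\cup\{W_N>\eta\}$. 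So far the two arguments are interchangeable.

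The gap is in the last step, the ``dichotomy'' you call the heart of the matter. You want to conclude that if $\int_A\bar p_n\not\to0$ then a weak limit of $(\bar p_n)$ furnishes an a.c.i.p., invoking Dunford--Pettis. But Dunford--Pettis requires uniform integrability of $(\bar p_n)$, and nothing in the hypotheses supplies it: the sequence is merely bounded in $L^1$, and precisely the concentration-on-shrinking-sets scenario you flag is the obstruction. Taking weak-$*$ limits in the space of measures instead does not help, since such limits of absolutely continuous measures need not be absolutely continuous. Your fallback, an ``amenable Hopf--Chacon--Ornstein theorem'', is not a safe black box: ratio ergodic theorems for general amenable non-singular actions are a delicate and partly open topic, and even the classical $\mathbb Z$ version gives ratio limits, not the absolute decay $\int_A\bar p_n\to0$ you need.

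The paper sidesteps both problems with a different compactness principle: Koml\'os' theorem, which from any $L^1$-bounded sequence extracts a subsequence that is \emph{Ces\`aro} convergent almost surely. One shows, using only the Følner property and the $L^1$-isometry of $\widehat{\f a}$, that this a.s.\ Ces\`aro limit is $\widehat{\f a}$-invariant, hence zero by nullity; a short real-variable lemma then upgrades Ces\`aro-a.s.\ convergence of non-negative functions to $0$ into a.s.\ convergence along a further subsequence, giving $\bar p_n\to0$ in measure (Theorem~\ref{Theorem: Krengel Amenable}). Note, incidentally, that convergence \emph{in measure} of $\bar p_n$ to $0$ is all the paper uses (and all you need after your $\eta$-truncation): it never establishes, and does not require, the stronger $\int_A\bar p_n\to0$. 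Replacing your Dunford--Pettis paragraph by the Koml\'os argument closes the gap and makes the proof go through.
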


In order to formulate the characterization of ergodicity and weak-mixing in terms of the Rosiński minimal spectral representation in the general case, Samorodnitsky used the positive-null decomposition (see Section \ref{Section: Positive-Null Decomposition}), showing that for stationary non-Gaussian $S\alpha S$ processes indexed by $\mathbb{Z}$ or $\mathbb{R}$ ergodicity is equivalent to the process having Rosiński minimal spectral representation that is null. That is to say, the non-singular $G$-system underlying the Rosiński minimal spectral representation is null \cite{samorodnitsky2005}. This result was also generalized to $\mathbb{Z}^d$ and $\mathbb{R}^d$ by Wang, Roy \& Stoev \cite{wangroystoev}, using a different approach to the positive-null decomposition following Takahashi \cite{takahashi}.

In a recent work \cite{roy2020}, Roy studied {\it non-commutative} aspects of stationary non-Gaussian $S\alpha S$ $G$-processes, establishing a connection between this theory and operator algebras via the group measure space construction of Murray-von Neumann. It was conjectured by Roy that the above characterization holds true for the discrete Heisenberg group and he also posed the problem of generalizing this to a larger class of amenable groups such as nilpotent groups. The following theorem confirms this to the extent of all countable amenable groups.

\begin{theorem}
\label{Theorem: Main Theorem}
Let $G$ be a countable amenable group. A stationary real-valued non-Gaussian $S\alpha S$ $G$-process is weak-mixing if and only if its Rosiński minimal spectral representation is null.
\end{theorem}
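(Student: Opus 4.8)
The plan is to prove the two implications separately: the easy one (null $\Rightarrow$ weak-mixing) from Theorem~\ref{Theorem: weak-mixing null}, and the hard one (weak-mixing $\Rightarrow$ null) by contraposition through the positive--null decomposition of Section~\ref{Section: Positive-Null Decomposition}. I begin with the easy implication. The process in \eqref{eq: Rosinski Representation} differs from \eqref{eq: represeted process} only by the $\{+1,-1\}$-valued cocycle $c$, whereas weak-mixing --- equivalently, by Theorem~\ref{Theorem: Ergodicity iff Weak-Mixing}, ergodicity --- is detected by the averaged overlap functional
\[ O_n(f_0):=\frac{1}{|F_n|}\sum_{g\in F_n}\int_\Omega\min\!\Big(|f_0|^\alpha,\ \tfrac{d\mu\circ\f a_g}{d\mu}\,|f_0\circ\f a_g|^\alpha\Big)\,d\mu \]
along a Følner sequence $(F_n)_n$, a quantity involving only $|f_0|$ and the action. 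Hence $c$ is immaterial, and the Rosiński process is weak-mixing if and only if the cocycle-free process \eqref{eq: represeted process} over the same $G$-system is; when that system is null the latter is weak-mixing by Theorem~\ref{Theorem: weak-mixing null}. I would record this cocycle reduction as a short lemma so that $c$ may be discarded throughout.

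For the hard implication I argue the contrapositive: a non-null system yields a non-weak-mixing process. Using the positive--null decomposition I write $\Omega=\mathcal P\sqcup\mathcal N$ into $G$-invariant sets, with $\mathcal N$ null and $\mathcal P$ carrying an absolutely continuous $\f a$-invariant probability measure $\nu\sim\mu|_{\mathcal P}$; by hypothesis $\mu(\mathcal P)>0$, and minimality of the representation forces $f_0\mathbf 1_{\mathcal P}\neq 0$, since otherwise every $f_g$ would vanish on the invariant set $\mathcal P$. As $\mathcal P$ and $\mathcal N$ are invariant, the integrand of $O_n(f_0)$ at a point involves only values lying in the same piece, so $O_n(f_0)=O_n(f_0\mathbf 1_{\mathcal P})+O_n(f_0\mathbf 1_{\mathcal N})\ge O_n(f_0\mathbf 1_{\mathcal P})$, and it suffices to show the $\mathcal P$-part stays bounded away from $0$.

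Passing to $\nu$, with $\tilde f_0=(d\mu/d\nu)^{1/\alpha}f_0$ on $\mathcal P$ the Radon--Nikodym factor cancels by invariance of $\nu$, and $O_n(f_0\mathbf 1_{\mathcal P})=\frac1{|F_n|}\sum_{g\in F_n}I_g$ with $I_g=\int_{\mathcal P}\min(h,h\circ\f a_g)\,d\nu$, where $h=|\tilde f_0|^\alpha\in L^1(\nu)$ and $c:=\int_{\mathcal P}h\,d\nu=\|f_0\mathbf 1_{\mathcal P}\|_\alpha^\alpha>0$; invariance gives $\int_{\mathcal P}h\circ\f a_g\,d\nu=c$. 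From the pointwise bound $\min(h,h\circ\f a_g)\ge K^{-1}(h\wedge K)\big((h\wedge K)\circ\f a_g\big)$ and the mean ergodic theorem for the measure-preserving $G$-action on $(\mathcal P,\nu)$ along $(F_n)$, the Cesàro averages of $\langle h\wedge K,(h\wedge K)\circ\f a_g\rangle_{L^2(\nu)}$ converge to $\|P(h\wedge K)\|_2^2$, where $P$ projects onto the invariant vectors. Since $\mathbf 1$ is invariant and $\nu(\mathcal P)=1$, this limit is at least $\big(\int_{\mathcal P}(h\wedge K)\,d\nu\big)^2=c_K^2$, so $\liminf_n\frac1{|F_n|}\sum_{g\in F_n}I_g\ge c_K^2/K>0$ for $K$ large. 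Therefore $O_n(f_0)\not\to0$ and $\mathbf X$ is not weak-mixing, which is the contrapositive.

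The main obstacle is the positive--null decomposition itself for an arbitrary countable amenable $G$, together with the accompanying fact that the overlap functional controlling weak-mixing respects the invariant splitting $\Omega=\mathcal P\sqcup\mathcal N$; this is where amenability enters decisively, both through Følner averaging in $O_n$ and through the mean ergodic theorem on the positive part, and it is the step that generalizes the $\mathbb Z^d$ analysis of Wang--Roy--Stoev and the one-parameter flow case of Samorodnitsky. By contrast, once the decomposition is in place the conclusion that the positive part destroys weak-mixing is soft: the mass $c=\int_{\mathcal P}h\,d\nu$ is conserved along the orbit because $\mathbf 1$ is $\f a$-invariant, so $h$ and $h\circ\f a_g$ cannot be asymptotically disjointly supported on a density-one set of $g\in G$, which is exactly the persistent overlap that obstructs mixing.
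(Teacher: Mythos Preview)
Your overall strategy matches the paper's: the null $\Rightarrow$ weak-mixing implication is Theorem~\ref{Theorem: weak-mixing null} (the cocycle is harmless since $|c_g|\equiv 1$, so $|\f u_g f_0|^\alpha=\widehat{\f a}_g|f_0|^\alpha$ regardless), and the converse runs by contraposition through the positive--null decomposition, passing to the $\f a$-invariant probability $\nu\sim\mu|_{\mathcal P}$ and using the mean ergodic theorem to exhibit persistent overlap. The paper carries this out via Gross's condition (Theorem~\ref{Theorem: Weak-mixing Characterization}, Conditions~(3)--(4)), applying von~Neumann's ergodic theorem to the indicators $1_{\{|f_0|^\alpha\in K\}}$; you apply it instead to the truncations $h\wedge K$. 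One pleasant feature of your formulation is that the additivity $O_n(f_0)=O_n(f_0 1_{\mathcal P})+O_n(f_0 1_{\mathcal N})$ lets you conclude directly that $\mathbf X$ itself is not weak-mixing, whereas the paper has to invoke an extra argument of Samorodnitsky to pass from ``$\mathbf X^{\mathcal P}$ not weak-mixing'' to ``$\mathbf X$ not ergodic''.

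The gap is your bare assertion that weak-mixing is ``detected by'' $O_n(f_0)\to 0$. You rely on this both to discard the cocycle and, crucially, to infer non-weak-mixing from $O_n\not\to 0$, but you never justify it and it appears nowhere in the paper. It is true and can be recovered from Theorem~\ref{Theorem: Weak-mixing Characterization}: a routine splitting of $\int\min(|f_0|^\alpha,|f_g|^\alpha)\,d\mu$ (the tail over $\{|f_0|^\alpha\notin[\delta,M]\}$ is small by integrability of $|f_0|^\alpha$, while the main part is bounded by $M\cdot\mu(|f_0|^\alpha\in[\delta,M],\,|f_g|^\alpha>\epsilon)+\epsilon\cdot\mu(|f_0|^\alpha\geq\delta)$) shows that $O_n\to 0$ is equivalent to Condition~(4) there. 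Either supply that derivation explicitly, or reformulate the hard direction directly in terms of Gross's condition as the paper does --- which also makes the separate cocycle-reduction lemma unnecessary.
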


Note that in the formulation of Theorem \ref{Theorem: Main Theorem} we used the ergodic-theoretic fact that a positive-null decomposition indeed exists for $G$-systems with $G$ a countable amenable group. This will be proved in Section \ref{Section: Positive-Null Decomposition}.

\begin{remark}
After this work was first published, I was informed by M. Mj, P. Roy and S. Sarkar that they have also been working on a proof of Theorem \ref{Theorem: Main Theorem}. Their results can be found in \cite{mj2022}.
\end{remark}

\subsection*{About the proofs}

The large collection of works of the aforementioned authors on the theory of $S\alpha S$ processes provides us many probabilistic tools that we may apply also in the amenable world. Thus, our main challenges here are functional analytic and ergodic-theoretic in nature.

Here is the first instance of such a challenge. In his proof for the equivalence of ergodicity and weak-mixing on $\mathbb{Z}$ and $\mathbb{R}$ \cite{podgorski}, Podgórski made the following key observation: ergodicity and weak-mixing can be tested using a certain class of positive definite functions on the group, and such functions have a certain convergence property that, in our language, is called {\it Følner convergence} (see Definition \ref{Definition: Folner Convergence}). This convergence phenomenon was used also by Rosiński \& Żak \cite{rosinskizak} in showing, more generally, that ergodicity and weak-mixing are equivalent in infinitely divisible processes on $\mathbb{Z}$ or $\mathbb{R}$. A discussion of this phenomenon for Abelian groups can be found in \cite[\textsection 8.5]{samorodnitsky2016}. The technique they used to establish the Følner convergence of positive definite functions was Bochner Theorem from classical harmonic analysis. While this technique works for Abelian groups, it is no longer applicable for non-Abelian amenable groups.

We take a new approach, considering another notion called {\it almost convergence} that was first studied in a classical work of Lorentz \cite{lorentz} for $\mathbb{Z}$ and its extension to amenable groups was studied by Day \cite{day}, Dye \cite{dye}, Douglass \cite{douglass} and many others. See Definition \ref{Definition: Almost Convergence}. Using the Ryll-Nardzewski Fixed Point Theorem \cite{ryll} it can be shown that positive definite functions on a locally compact Hausdorff amenable group are almost convergent. We then apply a criterion due to Dye and Douglass to obtain Følner convergence for positive definite functions on countable amenable groups. See Section \ref{Section: Positive Definite}.

The next challenge is purely ergodic-theoretical and it is to formulate the positive-null decomposition for systems of countable amenable groups. This was done for $\mathbb{Z}$ and $\mathbb{R}$ by Krengel \cite{krengel1967} and it was generalized to $\mathbb{Z}^{d}$ and $\mathbb{R}^{d}$ by Wang, Roy \& Stoev \cite{wangroystoev}. Here we generalize another theorem by Krengel in order to introduce a complete treatment of the positive-null decomposition of systems of countable amenable groups. See Theorems \ref{Theorem: Krengel Amenable} and \ref{Theorem: Positive-Null Decomposition}.

Having the positive-null decomposition in hands, we introduce Theorem \ref{Theorem: Weak-mixing Characterization} in which we refine Gross' condition of weak-mixing \cite{gross} in a way that is necessary to our proofs. An important feature of our formulation of the positive-null decomposition is that it is formulated concretely using Følner sequences. Thus, we are able to apply the Lindenstrauss Pointwise Ergodic Theorem \cite{lindenstrauss} to obtain an amenable group analogue to Samorodnitsky's test \cite[Theorem~2.1]{samorodnitsky2005} and its generalization by Wang, Roy \& Stoev \cite[Theorem~3.1]{wangroystoev}, for processes having either positive or null Rosiński minimal spectral representation. See Theorem \ref{Theorem: Positive Null}.

\subsection*{Notations and general settings}

Set- and group-theoretic: By writing $F\Subset G$ we mean that $F$ is a finite subset of $G$. For a finite set $F$ we write $\left|F\right|$ for its cardinality. The symmetric difference of sets $A$ and $B$ will be denoted $A\triangle B$. If $G$ is a group then for $g\in G$ and $A\subset G$ we write $gA=\left\{ gh:h\in A\right\}$, and for $A,B\subset G$ we write $AB=\left\{gh:g\in A,h\in B\right\}$. Unless stated otherwise, $G$ stands everywhere for a countably infinite amenable group with the discrete topology.

Probability- and ergodic-theoretic: For a $G$-process $\mathbf{X}$ we denote its distribution, as a probability measure on $\mathbb{R}^{G}$, by $\mathbb{P}_{\mathbf{X}}$. The relation $\ed$ stands for equality in distribution. By $\mathbb{E}\left(\cdot\right)$ we refer to the operator of expectation with respect to a probability measure that should be understood in the context. The measure spaces we consider will be denoted by $\left(\Omega,\mu\right)$, and we omit the notation for the σ-algebra. All the measurable spaces we consider are standard Borel spaces, namely the σ-algebra is the Borel σ-algebra induced from a structure of complete separable metric space. All the measures we consider are Borel and σ-finite.

To ease the orientation and as a rule of thumb, we usually use the letter $\psi$ to denote functions on the group $G$; the letter $\phi$ to denote functions of the process $\mathbf{X}$; and, the letter $f$ to denote functions on a measure space, which will be usually denoted by $\Omega$. The notation $f_0$ is always used to denote the special function in the Rosiński spectral representation of the process as in Theorem \ref{Theorem: Rosiński}.

We refer to a $G$-system as {\it null} if it has no a.c.i.p. and as {\it positive} if it has an a.c.i.p. that is mutually absolutely continuous with the given measure. We refer to a Rosiński spectral representation of a stationary $S\alpha S$ process also as {\it null} (resp. {\it positive}), if its underlying system is null (resp. positive).

For a countable set $I$ and a real-valued non-negative function $\psi$ on $I$, we write
\begin{equation}
\label{eq: countable limits}
\lim_{i\in I}\psi\left(i\right)=c
\end{equation}
for some $c\in\left[0,\infty\right)$ if, for every $\epsilon>0$, the set $\left\{ i\in I:\left|\psi\left(i\right)-c\right|>\epsilon\right\}$ is finite. Similarly, we write $\lim_{i\in I}\psi\left(i\right)=\infty$ if, for every $M>0$, the set $\left\{ i\in I:\psi\left(i\right)<M\right\}$ is finite.
 
\section{Positive definite functions on amenable groups}
\label{Section: Positive Definite}

Let $G$ be a locally compact Hausdorff (LCH) group. Let $\lambda$ and $\rho$ to be the actions of $G$ on the linear space of functions from $G$ to $\mathbb{C}$ by translation from the left and from the right, respectively. Namely, for $g\in G$ and $\psi:G\to\mathbb{C}$,
$$\lambda_{g}\psi\left(h\right)=\psi\left(gh\right)\,\,\text{and}\,\,\,\rho_{g}\psi\left(h\right)=\psi\left(hg^{-1}\right).$$

Consider the Banach algebra $L^{\infty}\left(G\right)$ (or $l^{\infty}\left(G\right)$, when $G$ is countable) of essentially bounded measurable functions from $G$ to $\mathbb{C},$ where the reference measure is the Haar measure of $G$. Consider also the Banach sub-algebra $\mathcal{C}_{b}\left(G\right)$ of continuous bounded functions on $G$. In the following we consider two important sub-algebras of $\mathcal{C}_{b}\left(G\right)$: the algebra of positive definite functions on $G$ and the algebra of weakly almost periodic functions on $G$. Let us briefly recall the fundamentals of these algebras.

A function $\psi\in\mathcal{C}_b\left(G\right)$ is said to be {\it positive definite} (sometimes also {\it of positive type}) if for every positive integer $n$ and every choice of $c_{1},\dotsc,c_{n}\in\mathbb{C}$ and $g_{1},\dotsc,g_{n}\in G$,
$$\sum_{i,j=1}^{n}c_{i}\overline{c_{j}}\psi\left(g_{j}^{-1}g_{i}\right)\geq0.$$
By the well-known GNS construction (see for instance \cite[\textsection 1.B]{bekka2020}, \cite[\textsection C]{bekka2008}), every positive definite function $\psi$ on $G$ is of the form
$$\psi\left(g\right)=\ip{\pi\left(g\right)\xi_{0}}{\xi_{0}},\quad g\in G,$$
where $\pi:G\to\mathrm{U}\left(\mathcal{H}\right)$ is a unitary representation of $G$ in a Hilbert space $\mathcal{H}$ and $\xi_{0}\in\mathcal{H}$ is a cyclic vector for $\pi$. For the fundamental properties of positive definite functions we refer to \cite[\textsection 32]{hewitt} and \cite[\textsection C.4]{bekka2008}.

A function $\psi\in\mathcal{C}_{b}\left(G\right)$ is said to be {\it weakly almost periodic} if the set $\left\{ \lambda_{g}\psi:g\in G\right\}$  forms a relatively weakly compact subset in $l^{\infty}\left(G\right)$. It turns out that, in the definition of weakly almost periodicity, there is no difference if we consider $\rho$ instead of $\lambda$. One way to produce a weakly almost periodic function on $G$ is by letting
$$\psi\left(g\right)=\ip{\pi\left(g\right)\xi_{0}}{\eta_{0}},\quad g\in G,$$
where $\pi:G\to\mathrm{U}\left(\mathcal{H}\right)$ is a unitary representation of $G$ in a Hilbert space $\mathcal{H}$ and $\xi_{0},\eta_{0}\in\mathcal{H}$ are arbitrary vectors. See for instance \cite[Corollary~4.23]{berglund}. Thus, every positive definite function is weakly almost periodic. For comprehensive accounts of weakly almost periodic functions we refer to \cite{berglund,burckel}. 

Recall that if $V$ is some closed subspace of $L^{\infty}\left(G\right)$, a {\it left-invariant mean} on $V$ is a continuous linear functional $\mathbf{m}$ defined on $V$ that satisfies $\mathbf{m}\left(1\right)=1$; $\mathbf{m}\left(\psi\right)\geq0$ for all non-negative real-valued $\psi\in V$; and $\mathbf{m}\circ\lambda_{g}=\mathbf{m}$ for all $g\in G$. The definition for right-invariant mean is analogous. By definition, an LCH group $G$ is amenable if $L^{\infty}\left(G\right)$ itself admits an invariant mean. In this case we say that $\mathbf{m}$ is an invariant mean on $G$.

The following notion was formulated for the integers in a classical work of Lorentz \cite{lorentz} and it has a natural generalization for amenable groups as follows.

\begin{definition}[Almost Convergence]
\label{Definition: Almost Convergence}
A function $\psi\in L^{\infty}\left(G\right)$ is said to be {\it almost convergent} to a constant $E_{G}\left(\psi\right)$ if for every invariant mean $\mathbf{m}$ on $G$, from the left or from the right, it holds that $\mathbf{m}\left(\psi\right)=E_{G}\left(\psi\right)$. In case that $\psi$ is almost convergent we call $E_{G}\left(\psi\right)$ the {\it universal mean} of $\psi$.
\end{definition}

It is an immediate observation that the class of almost convergent functions is a closed subspace of $L^{\infty}\left(G\right)$ containing the constant functions. Thus, the universal mean assignment $\psi\mapsto E_{G}\left(\psi\right)$ is a linear functional defined on this subspace.

\begin{proposition}
\label{Proposotion: P.D. is A.C.}
Let $G$ be an amenable LCH group. Then every weakly almost periodic function on $G$ is almost convergent.
\end{proposition}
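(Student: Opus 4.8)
The plan is to produce, for a fixed weakly almost periodic $\psi$, a single constant lying in the weak closure of the convex hull of its left translates, to show that every left-invariant mean is forced to take this value on $\psi$, to run the symmetric argument on the right, and finally to reconcile the two constants by means of a two-sided invariant mean. The mechanism that manufactures the constant is the Ryll-Nardzewski Fixed Point Theorem \cite{ryll}.

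First I would set $K_\lambda:=\overline{\mathrm{co}}\,\{\lambda_g\psi:g\in G\}$, the weakly closed convex hull of the left orbit of $\psi$ in $l^{\infty}(G)$. Since $\psi$ is weakly almost periodic, the orbit $\{\lambda_g\psi:g\in G\}$ is relatively weakly compact, and by Krein's theorem (the closed convex hull of a weakly compact set is weakly compact) the set $K_\lambda$ is a nonempty, convex, weakly compact subset of $l^{\infty}(G)$. The operators $\lambda_g$ are surjective linear isometries of $l^{\infty}(G)$, hence weakly continuous affine maps carrying the orbit, and therefore $K_\lambda$, into itself; being isometries, the semigroup $\{\lambda_g:g\in G\}$ is noncontracting, because $\|\lambda_g\xi-\lambda_g\eta\|=\|\xi-\eta\|$ stays bounded away from $0$ whenever $\xi\neq\eta$. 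Thus the hypotheses of the Ryll-Nardzewski theorem are met and the semigroup admits a common fixed point $\psi_*\in K_\lambda$. A function fixed by every $\lambda_g$ satisfies $\psi_*(gh)=\psi_*(h)$ for all $g,h\in G$, so it is the constant $c_\lambda:=\psi_*(e)$; in particular $c_\lambda\cdot 1\in K_\lambda$.

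Next I would pin down the value of an arbitrary left-invariant mean $\mathbf{m}$ on $\psi$. Left-invariance gives $\mathbf{m}(\lambda_g\psi)=\mathbf{m}(\psi)$ for every $g$, so by linearity $\mathbf{m}$ is constantly equal to $\mathbf{m}(\psi)$ on $\mathrm{co}\{\lambda_g\psi:g\in G\}$; since $\mathbf{m}\in\left(l^{\infty}(G)\right)^{*}$ is weakly continuous, it remains constantly equal to $\mathbf{m}(\psi)$ on the weak closure $K_\lambda$. Evaluating at the constant $c_\lambda\cdot 1\in K_\lambda$ and using $\mathbf{m}(1)=1$ yields $\mathbf{m}(\psi)=c_\lambda$. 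Hence every left-invariant mean assigns the same value $c_\lambda$ to $\psi$. Running the identical argument with the right translations $\rho_g$ in place of $\lambda_g$ produces a constant $c_\rho$ such that every right-invariant mean assigns $c_\rho$ to $\psi$.

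It remains to show $c_\lambda=c_\rho$, and this is the one step that calls on an external input rather than the fixed-point machinery: since $G$ is amenable it carries a two-sided invariant mean $\mathbf{m}_0$ on $l^{\infty}(G)$, obtained from a one-sided invariant mean by the standard double-averaging construction. Being left-invariant, $\mathbf{m}_0(\psi)=c_\lambda$; being right-invariant, $\mathbf{m}_0(\psi)=c_\rho$; therefore $c_\lambda=c_\rho$. Declaring $E_G(\psi)$ to be this common value, we conclude that every invariant mean on $G$, from the left or from the right, takes the value $E_G(\psi)$ on $\psi$, which is exactly almost convergence. I expect the main obstacle to be the careful verification that $K_\lambda$ fits the Ryll-Nardzewski framework, namely weak compactness of the closed convex hull through Krein's theorem together with the noncontracting property; by contrast the reconciliation $c_\lambda=c_\rho$ is the only point at which amenability (beyond the bare weakly almost periodic structure) is genuinely used.
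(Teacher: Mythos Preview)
Your proof is correct and follows essentially the same route as the paper's: form the weakly compact closed convex hull of the left orbit via Krein--Smulian, apply Ryll-Nardzewski to obtain a constant fixed point, deduce that every left-invariant mean is forced to this constant, repeat on the right, and reconcile via a two-sided invariant mean. The only differences are cosmetic---you spell out the noncontracting hypothesis and the weak continuity of $\mathbf{m}$ a bit more explicitly than the paper does.
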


\begin{proof}
Suppose that $\psi\in l^{\infty}\left(G\right)$ is a weakly almost periodic function. Let $K^{\lambda}_{\psi}$ be the closed convex hull of the set $\left\{\lambda_g\psi:g\in G\right\}$ in $L^{\infty}\left(G\right)$. Since $\psi$ is weakly almost periodic and by the virtue of Krein–Smulian theorem, $K^{\lambda}_{\psi}$ is weakly compact. Since $G$ acts on $K_{\psi}$ by affine isometries, $\lambda_g \psi'=\psi'\circ g$, it follows from the Ryll-Nardzewski Fixed Point Theorem \cite{ryll} that there exists an element $\psi'\in K^{\lambda}_{\psi}$ such that $\lambda_g\psi'=\psi'$ for all $g\in G$, namely $\psi'$ is a constant function, so we write $\psi'=E^{\lambda}\left(\psi\right)$ where $E^{\lambda}\left(\psi\right)$ is a constant. Observe that every left-invariant mean $\mathbf{m}$ on $G$ takes the value $\mathbf{m}\left(\psi\right)$ on all of $K^{\lambda}_{\psi}$. Then, since $K^{\lambda}_{\psi}$ contains the constant function $E^{\lambda}\left(\psi\right)$, necessarily $\mathbf{m}\left(\psi\right)=E^{\lambda}\left(\psi\right)$ for every left-invariant mean $\mathbf{m}$ on $G$.

Using the same reasoning with $K^{\rho}_{\psi}$, the closed convex hull of the set $\left\{\rho_g\psi:g\in G\right\}$ in $L^{\infty}\left(G\right)$, we obtain a constant $E^{\rho}\left(\psi\right)$ such that $\mathbf{m}\left(\psi\right)=E^{\rho}\left(\psi\right)$ for every right-invariant mean $\mathbf{m}$ on $G$.

Finally, as $G$ is amenable there is always a two-sided invariant mean on $G$ (see for instance \cite[Theorem~4.10]{kerr2016ergodic}). Alternatively, there is always a (unique) two-sided invariant mean on the algebra of weakly almost periodic functions (cf. \cite[Corollary~1.26]{burckel}, \cite[\textsection 3]{berglund}, \cite[\textsection 3.1]{greenleaf}). In any case, such a two-sided invariant mean assigns to $\psi$ both $E^{\lambda}\left(\psi\right)$ and $E^{\rho}\left(\psi\right)$ so that necessarily $E^{\lambda}\left(\psi\right)=E^{\rho}\left(\psi\right)$. Thus, $\psi$ is almost convergent.
\end{proof}

So far we discussed general amenable LCH group $G$. When $G$ is countable, it is well-known that it is amenable if and only if it admits a (left) {\it Følner sequence}, that is a sequence $\left(F_{N}\right)_{N=1}^{\infty}$ of finite subsets of $G$ satisfying the (left) {\it Følner property} by which, for every $g\in G$,
$$\lim_{N\to\infty}\frac{\left|gF_{N}\triangle F_{N}\right|}{\left|F_{N}\right|}=0.$$
Moreover, this Følner sequence can be chosen to be increasing, namely $F_{1}\subset F_{2}\subset\dotsm,$ and exhausting $G,$ namely $G=F_{1}\cup F_{2}\cup\dotsm$. Thus, whenever we refer to a Følner sequence we shall assume that it is increasing and exhaustive in addition to the Følner property. For comprehensive accounts of amenability see \cite{namioka,paterson}. Every amenable group admits a two-sided invariant mean and, in the countable case, a two-sided Følner sequence (see for instance \cite[Theorem~4.10]{kerr2016ergodic}).

In Lorentz's work on almost convergence in $\mathbb{Z}$, he showed that the universal mean of a real-valued almost convergent sequence can be computed using Cesàro summation along the sequence in a certain uniform way. When it comes to countable amenable groups, Douglass in \cite[Theorem~4.1]{douglass} used an idea of Dye \cite{dye} to generalize Lorentz's result, showing that the universal mean of an almost convergent function can be computed using two-sided Følner sequences. Before we formulate this criterion let us introduce the convenient notation, for $F\Subset G$ and a function $\psi:G\to\mathbb{C}$,
$$E_{F}\left(\psi\right)=\frac{1}{\left|F\right|}\sum_{g\in F}\psi\left(g\right).$$

\begin{definition}[Følner convergence]
\label{Definition: Folner Convergence}
Let $G$ be a countable amenable group. A function $\psi\in l^{\infty}\left(G\right)$ is said to be {\it Følner convergent} to a constant $E_{G}^{\text{Følner}}\left(\psi\right)$ if, for every two-sided Følner sequence $\left(F_{N}\right)_{N=1}^{\infty}$ for $G$,
$$\lim_{N\to\infty}\sup_{g\in G}\left|E_{F_{N}}\left(\lambda_{g}\psi\right)-E_{G}^{\text{Følner}}\left(\psi\right)\right|=\lim_{N\to\infty}\sup_{g\in G}\left|E_{F_{N}}\left(\rho_{g}\psi\right)-E_{G}^{\text{Følner}}\left(\psi\right)\right|=0.$$
In case that $\psi$ is Følner convergent we call $E_{G}^{\text{Følner}}\left(\psi\right)$ the {\it Følner mean} of $\psi$.
\end{definition}

The Dye-Douglass Criterion \cite{dye, douglass} reads as follows (see Appendix \ref{Appendix: Dye-Douglass Criterion}).

\begin{theorem}[Dye-Douglass Criterion]
\label{Theorem: Dye-Douglass Criterion}
A bounded real-valued function on a countable amenable group is almost convergent if and only if it is Følner convergent. In this case, its universal mean and its Følner mean coincide.
\end{theorem}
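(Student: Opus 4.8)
The plan is to prove the equivalence by passing back and forth between one-sided invariant means and Cesàro averages of one-sided translates of $\psi$, the decisive structural point being that a \emph{two-sided} Følner sequence lets one control invariance on the side \emph{opposite} to the translation. Throughout I would use the conventions $\lambda_g\psi\left(h\right)=\psi\left(gh\right)$ and $\rho_g\psi\left(h\right)=\psi\left(hg^{-1}\right)$, and I would record at the outset the two elementary identities
$$E_{F}\left(\lambda_g\psi\right)=\frac{1}{\left|F\right|}\sum_{h\in F}\rho_{h^{-1}}\psi\left(g\right),\qquad E_{F}\left(\rho_g\psi\right)=\frac{1}{\left|F\right|}\sum_{h\in F}\lambda_{h}\psi\left(g^{-1}\right),\qquad F\Subset G.$$
In words: viewed as a function of $g$, the map $g\mapsto E_{F}\left(\lambda_g\psi\right)$ is a convex combination of \emph{right}-translates of $\psi$, while $g\mapsto E_{F}\left(\rho_g\psi\right)$ is the reflection $g\mapsto C\left(g^{-1}\right)$ of a convex combination $C$ of \emph{left}-translates of $\psi$. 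This crossover is what makes the two halves of the Følner-convergence definition match up with the two kinds of one-sided means.

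For the implication that Følner convergence implies almost convergence I would fix one two-sided Følner sequence $\left(F_N\right)$ (which exists since $G$ is amenable and countable). The $\lambda$-half of the definition says precisely that $A_N:=\frac{1}{\left|F_N\right|}\sum_{h\in F_N}\rho_{h^{-1}}\psi$ tends to $c$ uniformly on $G$. If $\mathbf{n}$ is any right-invariant mean then $\mathbf{n}\left(\rho_{h^{-1}}\psi\right)=\mathbf{n}\left(\psi\right)$, hence $\mathbf{n}\left(A_N\right)=\mathbf{n}\left(\psi\right)$ for every $N$, and $\left|\mathbf{n}\left(A_N\right)-c\right|\le\no{A_N-c}_{\infty}\to0$ forces $\mathbf{n}\left(\psi\right)=c$. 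The $\rho$-half says that $C_N:=\frac{1}{\left|F_N\right|}\sum_{h\in F_N}\lambda_{h}\psi$ tends to $c$ uniformly (the reflection $g\mapsto g^{-1}$ is a bijection and leaves the supremum unchanged), and left-invariance of any left-invariant mean $\mathbf{m}$ gives $\mathbf{m}\left(\psi\right)=\mathbf{m}\left(C_N\right)\to c$. Since every one-sided invariant mean then assigns $c$ to $\psi$, the function is almost convergent with universal mean $c$; note that no Følner property of $\left(F_N\right)$ is actually used here, only the two uniform limits.

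For the converse, the hard direction, I would argue by contradiction. Assume $\psi$ is almost convergent with universal mean $c$ but is \emph{not} Følner convergent to $c$ along some two-sided Følner sequence $\left(F_N\right)$, so one of the two suprema does not vanish; say, after passing to a subsequence, there are $\epsilon>0$ and $g_N\in G$ with $\left|E_{F_N}\left(\lambda_{g_N}\psi\right)-c\right|\ge\epsilon$, that is $\left|E_{g_NF_N}\left(\psi\right)-c\right|\ge\epsilon$. I would then look at the means $\phi_N\left(y\right):=E_{g_NF_N}\left(y\right)$ on $l^{\infty}\left(G\right)$; by weak-$*$ compactness of the set of means they cluster at a mean $\phi$ with $\left|\phi\left(\psi\right)-c\right|\ge\epsilon$. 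The key verification is that $\phi$ is right-invariant, which follows from
$$\left|\phi_N\left(\rho_h y\right)-\phi_N\left(y\right)\right|\le\no{y}_{\infty}\frac{\left|g_NF_Nh^{-1}\triangle g_NF_N\right|}{\left|F_N\right|}=\no{y}_{\infty}\frac{\left|F_Nh^{-1}\triangle F_N\right|}{\left|F_N\right|}\xrightarrow[N\to\infty]{}0,$$
using that $\left(F_N\right)$ is a right-Følner sequence and that left-translating by $g_N$ preserves the cardinality of the symmetric difference. Thus $\phi$ is a right-invariant mean with $\phi\left(\psi\right)\ne c$, contradicting almost convergence; the case in which the $\rho$-supremum fails is symmetric and produces a left-invariant mean via the left-Følner property. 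This yields Følner convergence to $c$, so the universal and Følner means coincide.

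The hard part, and the only genuinely non-trivial point, is the estimate in the last display. In a non-Abelian group the translated sets $g_NF_N$ with $g_N$ varying are in general \emph{not} left-Følner, since $\left|hg_NF_N\triangle g_NF_N\right|=\left|\left(g_N^{-1}hg_N\right)F_N\triangle F_N\right|$ involves the drifting conjugates $g_N^{-1}hg_N$, which need not remain in any fixed finite set; this is exactly what defeats the naive Abelian (Lorentz-type) argument. The resolution is to read off invariance on the opposite side, where the conjugation cancels, and it is precisely for this that the two-sidedness of the Følner sequence is built into the statement in an essential way.
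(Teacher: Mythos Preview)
Your argument is correct and shares the paper's key structural insight: averaging left-translates over a two-sided F\o lner set produces approximate \emph{right}-invariance (and conversely), so that a failure of F\o lner convergence on the $\lambda$-side manufactures a right-invariant mean that separates $\psi$ from $c$. The paper's appendix proves the same theorem via an intermediate layer---Dye's functionals $\lambda^{\pm}(\psi),\rho^{\pm}(\psi)$ and Douglass's Lemma~4.1---and then, for the implication $(4)\Rightarrow(1)$, builds the bad right-invariant means by applying a Banach limit to the sequence $N\mapsto E_{F_N}(\lambda_{h_N^{\pm}}\psi)$. Your weak-$*$ cluster-point construction is the same device stripped of the Banach-limit packaging, and by going straight from almost convergence to F\o lner convergence you bypass the Dye functionals entirely. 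This is a genuine simplification in presentation (you never need Douglass's lemmas), at the cost of not isolating the quantities $\lambda^{\pm},\rho^{\pm}$, which the paper records for their own interest. One small point worth making explicit in your write-up: when you take a weak-$*$ cluster point $\phi$ of the $\phi_N$, the conclusion $\left|\phi(\psi)-c\right|\ge\epsilon$ follows because $\phi(\psi)$ is a subnet limit of values lying in the closed set $\{x:\left|x-c\right|\ge\epsilon\}$; this is routine but deserves a half-sentence.
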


Combining the Dye-Douglass Criterion with Proposition \ref{Proposotion: P.D. is A.C.}, we immediately obtain the following result that motivated the current discussion.

\begin{theorem}
\label{Theorem: P.D. is F.C.}
Every real-valued weakly almost periodic function $\psi$ on a countable amenable group $G$ is Følner convergent to its universal mean $E_G\left(\psi\right)$ as in Definition \ref{Definition: Almost Convergence}.
\end{theorem}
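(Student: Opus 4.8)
The plan is to obtain this statement as an immediate corollary of the two results just assembled, namely Proposition \ref{Proposotion: P.D. is A.C.} (weak almost periodicity implies almost convergence) and the Dye-Douglass Criterion (Theorem \ref{Theorem: Dye-Douglass Criterion}). All of the genuine work has already been carried out in proving those two statements, so at this stage I only need to verify that their hypotheses line up with the situation at hand.

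First I would observe that, being weakly almost periodic, $\psi$ lies in $\mathcal{C}_{b}\left(G\right)\subset l^{\infty}\left(G\right)$, and in particular $\psi$ is bounded. A countable amenable group is in particular an amenable LCH group when equipped with the discrete topology, so Proposition \ref{Proposotion: P.D. is A.C.} applies and shows that $\psi$ is almost convergent. By Definition \ref{Definition: Almost Convergence} this means precisely that every invariant mean on $G$, from the left or from the right, assigns to $\psi$ one and the same constant, its universal mean $E_{G}\left(\psi\right)$.

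Next, since $\psi$ is a bounded real-valued function on the countable amenable group $G$ and we have just established that it is almost convergent, the Dye-Douglass Criterion applies verbatim: it yields that $\psi$ is Følner convergent and that its Følner mean $E_{G}^{\text{Følner}}\left(\psi\right)$ coincides with its universal mean $E_{G}\left(\psi\right)$. This is exactly the assertion of the theorem, so the argument closes here.

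As for the main obstacle, there is genuinely none remaining at this point; the difficulty has been front-loaded into the two inputs. The substantive content is split between Proposition \ref{Proposotion: P.D. is A.C.}, where the Ryll-Nardzewski Fixed Point Theorem produces the constant inside the closed convex hull, and the Dye-Douglass Criterion, whose proof (deferred to the appendix) carries the delicate \emph{uniform} Cesàro estimate over two-sided Følner sequences. The only detail requiring a moment's attention is the real-valuedness assumption in the Dye-Douglass Criterion, which is precisely why the theorem is stated for real-valued $\psi$; the remaining matching of hypotheses (boundedness, discreteness, amenability) is routine.
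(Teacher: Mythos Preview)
Your proposal is correct and matches the paper's approach exactly: the paper states the theorem as an immediate consequence of Proposition \ref{Proposotion: P.D. is A.C.} together with the Dye-Douglass Criterion (Theorem \ref{Theorem: Dye-Douglass Criterion}), without further argument. Your additional remarks on hypothesis verification (boundedness, real-valuedness, discreteness) are sound and simply make explicit what the paper leaves implicit.
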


\section{Ergodic properties of $S\alpha S$ processes}
\label{Section: Ergodic Properties}

Here we discuss ergodicity and weak-mixing of stationary $S\alpha S$ $G$-processes. We start with the preliminaries of the theory.

\subsection{Preliminaries}
\label{Section: Preliminaries}

Let $G$ be a group. For a $G$-process $\mathbf{X}=\left(X_{g}\right)_{g\in G}$ there is the associated $G$-system $G\overset{\lambda}{\curvearrowright}\left(\mathbb{R}^{G},\mathbb{P}_{\mathbf{X}}\right)$, where $\mathbb{P}_{\mathbf{X}}$ is the distribution of $\mathbf{X}$ on $\mathbb{R}^{G}$ and $\lambda$ is the action that is given, for $h\in G$ and $x=\left(x_{g}\right)_{g\in G}\in\mathbb{R}^{G}$, by
$$\lambda_{h}x=\left(x_{hg}\right)_{g\in G}.$$
A $G$-process $\mathbf{X}$ is called (left) {\it stationary} if its associated $G$-system $G\overset{\lambda}{\curvearrowright}\left(\mathbb{R}^{G},\mathbb{P}_{\mathbf{X}}\right)$ is measure preserving. That is to say,  $\mathbb{P}_{\mathbf{X}}\circ\lambda_{h}=\mathbb{P}_{\mathbf{X}}$ for every $h\in G$. For a $G$-process $\mathbf{X}=\left(X_{g}\right)_{g\in G}$ we consider the linear space
$$\mathrm{Lin}\left(\mathbf{X}\right)=\left\{ {\textstyle \sum_{g\in F}}c_{g}X_{g}:F\Subset G\text{ and }c:F\to\mathbb{C}\right\}.$$
Thus, $\mathbf{X}$ is an $S\alpha S$ $G$-process if every element of $\mathrm{Lin}\left(\mathbf{X}\right)$ follows the $S\alpha S$ distribution. In \cite[Theorem~2.1]{schilder} (see also \cite[\textsection 0]{hardin}), Schilder showed that there exists a quasi-norm $\no{\cdot}_{\alpha}$ on $\mathrm{Lin}\left(\mathbf{X}\right)$ whose metric structure induces convergence in probability, such that for every $\phi\in\mathrm{Lin}\left(\mathbf{X}\right)$,
\begin{equation}
\label{eq: alpha norm}
    \mathbb{E}\left(\exp\left(i\phi\right)\right)=\exp\left(-\no{\phi}_{\alpha}^{\alpha}\right).
\end{equation}
Thus, we denote the completion of $\mathrm{Lin}\left(\mathbf{X}\right)$ with respect to $\no{\cdot}_{\alpha}$ by $L^{\alpha}\left(\mathbf{X}\right)$.\footnote{The notation $L^{\alpha}\left(\mathbf{X}\right)$ should not be confused with the standard notations for $\alpha$-integrable functions. However, $L^{\alpha}\left(\mathbf{X}\right)$ is embedded isometrically in some space of $\alpha$-integrable functions on an abstract measure space. See \cite{schilder,hardin}.} Then $\lambda$ gives rise to an action on $L^{\alpha}\left(\mathbf{X}\right)$ by $\lambda_{g}f=f\circ\lambda_{g}$ and, when $\mathbf{X}$ is stationary, it is an action by isometries. The following is a fundamental observation made by Podgórski \cite{podgorski}.

\begin{proposition}
\label{Proposition: Is Positive Definite}
Let $\mathbf{X}$ be a stationary $S\alpha S$ $G$-process. Then for every $\phi\in L^{\alpha}\left(\mathbf{X}\right),$ the function $\psi_{\phi}:G\to\mathbb{R}$ given by
$$\psi_{\phi}\left(g\right)=\exp\left(-\no{\lambda_{g}\phi-\phi}_{\alpha}^{\alpha}\right),\quad g\in G,$$
is positive definite.
\end{proposition}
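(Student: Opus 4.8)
The plan is to exhibit $\psi_\phi$ as a Gram function of a suitable family of vectors in a Hilbert space, whence positive definiteness is automatic. The natural Hilbert space is $L^2\left(\mathbb{R}^G,\mathbb{P}_{\mathbf{X}}\right)$, on which $G$ acts by the transformations $\lambda_h$, these being $\mathbb{P}_{\mathbf{X}}$-preserving precisely because $\mathbf{X}$ is stationary. Since the completion $L^{\alpha}\left(\mathbf{X}\right)$ consists of genuine random variables on $\left(\mathbb{R}^{G},\mathbb{P}_{\mathbf{X}}\right)$ (convergence in $\no{\cdot}_{\alpha}$ being convergence in probability) and $\mathbf{X}$ is real-valued, the map $W:L^{\alpha}\left(\mathbf{X}\right)\to L^{2}\left(\mathbb{P}_{\mathbf{X}}\right)$ given by $W\left(\eta\right)=\exp\left(i\eta\right)$ is well-defined and takes values among the unimodular functions, in particular in $L^2$.

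First I would rewrite $\psi_\phi$ analytically. Because $\lambda_{g}\phi-\phi\in L^{\alpha}\left(\mathbf{X}\right)$, identity \eqref{eq: alpha norm} applied to this element gives
$$\psi_{\phi}\left(g\right)=\exp\left(-\no{\lambda_{g}\phi-\phi}_{\alpha}^{\alpha}\right)=\mathbb{E}\left(\exp\left(i\left(\lambda_{g}\phi-\phi\right)\right)\right),$$
the expectation being taken with respect to $\mathbb{P}_{\mathbf{X}}$; in particular $\psi_\phi$ is real, consistent with $\lambda_g\phi-\phi$ being $S\alpha S$ and hence symmetric.

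The key step is to compute, for $g_{i},g_{j}\in G$, the inner product $\ip{W\left(\lambda_{g_{i}}\phi\right)}{W\left(\lambda_{g_{j}}\phi\right)}$ in $L^{2}\left(\mathbb{P}_{\mathbf{X}}\right)$ and to identify it with $\psi_{\phi}\left(g_{j}^{-1}g_{i}\right)$. Since $\lambda$ is a left action on $L^{\alpha}\left(\mathbf{X}\right)$ one has the identity $\lambda_{g_{i}}\phi-\lambda_{g_{j}}\phi=\lambda_{g_{j}}\left(\lambda_{g_{j}^{-1}g_{i}}\phi-\phi\right)$, and because $\lambda_{g_{j}}$ acts on $L^{\alpha}\left(\mathbf{X}\right)$ by precomposition with the $\mathbb{P}_{\mathbf{X}}$-preserving map $\lambda_{g_{j}}$ on $\mathbb{R}^{G}$, stationarity lets me push $\lambda_{g_j}$ off the exponent:
$$\ip{W\left(\lambda_{g_{i}}\phi\right)}{W\left(\lambda_{g_{j}}\phi\right)}=\mathbb{E}\left(\exp\left(i\left(\lambda_{g_{i}}\phi-\lambda_{g_{j}}\phi\right)\right)\right)=\mathbb{E}\left(\exp\left(i\left(\lambda_{g_{j}^{-1}g_{i}}\phi-\phi\right)\right)\right)=\psi_{\phi}\left(g_{j}^{-1}g_{i}\right).$$

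With this identity in hand the conclusion is immediate: for any $n$, any $c_{1},\dotsc,c_{n}\in\mathbb{C}$ and any $g_{1},\dotsc,g_{n}\in G$,
$$\sum_{i,j=1}^{n}c_{i}\overline{c_{j}}\,\psi_{\phi}\left(g_{j}^{-1}g_{i}\right)=\sum_{i,j=1}^{n}c_{i}\overline{c_{j}}\ip{W\left(\lambda_{g_{i}}\phi\right)}{W\left(\lambda_{g_{j}}\phi\right)}=\no{{\textstyle\sum_{i=1}^{n}}c_{i}\,W\left(\lambda_{g_{i}}\phi\right)}_{L^{2}}^{2}\geq0,$$
so $\psi_\phi$ is positive definite (boundedness, hence membership in $\mathcal{C}_b\left(G\right)$, being automatic on discrete $G$ since $\left|\psi_\phi\right|\leq1$). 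I expect the only point requiring genuine care to be the middle equality above, where $\lambda_{g_j}$ is removed from the exponent by measure-preservation; this is exactly where the stationarity of $\mathbf{X}$ is used, together with the already-recorded facts that elements of the completion $L^{\alpha}\left(\mathbf{X}\right)$ are honest random variables and that $\lambda$ acts on them by $\lambda_{g}\eta=\eta\circ\lambda_{g}$. Everything else is the standard Gram-matrix observation.
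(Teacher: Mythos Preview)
Your proof is correct and follows essentially the same approach as the paper: both identify $\psi_{\phi}\left(g_{j}^{-1}g_{i}\right)$ with $\mathbb{E}\left(\exp\left(i\lambda_{g_{i}}\phi\right)\overline{\exp\left(i\lambda_{g_{j}}\phi\right)}\right)$ via stationarity and identity \eqref{eq: alpha norm}, and then conclude by the nonnegativity of $\mathbb{E}\left(\left|\sum_{i}c_{i}\exp\left(i\lambda_{g_{i}}\phi\right)\right|^{2}\right)$. Your framing in terms of the Gram map $W\left(\eta\right)=\exp\left(i\eta\right)$ into $L^{2}\left(\mathbb{P}_{\mathbf{X}}\right)$ is a cosmetic reformulation of the same computation.
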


We remark that in the Abelian case, a similar phenomenon is true in a more general setting. See \cite[\textsection 3.6]{samorodnitsky2016}).

\begin{proof}
Let $\phi\in L^{\alpha}\left(\mathbf{X}\right)$. Using the stationarity of $\mathbf{X}$ and Identity \eqref{eq: alpha norm}, for every $g,h\in G$,
$$\psi_{\phi}\left(g^{-1}h\right) = \exp\left(-\no{\lambda_{h}\phi-\lambda_{g}\phi}_{\alpha}^{\alpha}\right) = \mathbb{E}\left(\exp\left(i\lambda_{h}\phi\right)\overline{\exp\left(i\lambda_{g}\phi\right)}\right).$$
Thus, for every positive integer $n$ and every choice of $c_{1},\dotsc,c_{n}\in\mathbb{C}$ and $g_{1},\dotsc,g_{n}\in G$,
\begin{equation}
\sum_{i,j=1}^{n}c_{i}\overline{c_{j}}\psi_{\phi}\left(g_{j}^{-1}g_{i}\right)=\mathbb{E}\left(\left|\sum_{i=1}^{n}c_{i}\exp\left(i\lambda_{g_{i}}\phi\right)\right|^{2}\right)\geq0.\qedhere\nonumber
\end{equation}
\end{proof}

Next we record the fundamental Rosiński minimal spectral representation of stationary non-Gaussian $S\alpha S$ $G$-processes. We follow some of the definitions and notations that already established in the introduction.

\begin{definition}[Minimal Spectral Representation \cite{hardin}]
\label{Definition: Spectral Representation}
Let $G$ be a countable group and $\mathbf{X}=\left(X_{g}\right)_{g\in G}$ a stationary non-Gaussian $S\alpha S$ $G$-process. A {\it spectral representation} for $\mathbf{X}$ is a $G$-system $G\overset{\f a}{\curvearrowright}\left(\Omega,\mu\right)$, a special function $f_{0}\in L^{\alpha}\left(\mu\right)$ and an action $G\overset{\f u}{\curvearrowright}L^{\alpha}\left(\mu\right)$ by isometries such that
$$\mathbf{X}\ed\left(\int_{\Omega}\f u_{g}f_0dM_{\alpha}\right)_{g\in G}.$$
From the properties of random measures, for all $\sum_{g\in F}c_{g}X_{g}\in\mathrm{Lin}\left(\mathbf{X}\right)$,
$$\mathbb{E}\left(\exp\left(i{\textstyle \sum_{g\in F}}c_{g}X_{g}\right)\right)=\exp\left(-\no{{\textstyle \sum_{g\in F}}c_{g}\f u_{g}f_{0}}_{\alpha}^{\alpha}\right).$$
Such a spectral representation is said to be {\it minimal} if, up to a $\mu$-null set,
\begin{enumerate}
\item $\Omega$ is equal to the common support of the functions $\left\{ \f u_{g}f_{0}:g\in G\right\}$, and
\item the σ-algebra on $\Omega$ is generated by the (possibly infinite-valued) functions $\left\{ \f u_{g}f_{0}/\f u_{h}f_{0}:g,h\in G\right\}$.
\end{enumerate}
\end{definition}

Recall that a {\it cocycle} for a $G$-system $G\overset{\f a}{\curvearrowright}\left(\Omega,\mu\right)$ with values in some Abelian group $A$, is a function $c:G\times\Omega\to A$ such that, for all $g,h\in G$,
$$c_{gh}=c_{g}\circ\f a_{h}\cdot c_{h}\quad\text{almost surely}.$$

\begin{theorem}[Rosiński Minimal Spectral Representation \cite{rosinski1995}]
\label{Theorem: Rosiński}
Let $G$ be a countable group and $\mathbf{X}=\left(X_{g}\right)_{g\in G}$ a stationary non-Gaussian $S\alpha S$ $G$-process. There exists a $G$-system $G\overset{a}{\curvearrowright}\left(\Omega,\mu\right)$, a cocycle $c:G\times\Omega\to\left\{ +1,-1\right\}$ and a special function $f_0\in L^{\alpha}\left(\mu\right)$, such that the minimal spectral representation of $\mathbf{X}$ takes the form
$$\f u_{g}f_0=c_{g}\sqrt[\alpha]{\frac{d\mu\circ\f a_{g}}{d\mu}}f_0\circ\f a_{g},\quad g\in G.$$
Moreover, the minimal spectral representation is unique up to isomorphism of G-systems.
\end{theorem}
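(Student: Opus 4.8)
The plan is to derive the concrete shape of the isometries $\f u_g$ from the rigidity of linear isometries of $L^{\alpha}$-spaces in the non-Gaussian regime $\alpha\neq2$. First I would invoke the existence of a minimal spectral representation of $\mathbf{X}$ as in Definition \ref{Definition: Spectral Representation}, following Hardin \cite{hardin}: this furnishes a standard $\sigma$-finite space $\left(\Omega,\mu\right)$, a special function $f_0\in L^{\alpha}\left(\mu\right)$, and a homomorphism $g\mapsto\f u_g$ into the surjective linear isometries of $L^{\alpha}\left(\mu\right)$, with $\mathbf{X}\ed\left(\int_{\Omega}\f u_g f_0\,dM_{\alpha}\right)_{g\in G}$. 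The entire content of the theorem is then to show that each $\f u_g$ is a weighted composition operator of the asserted form.

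The central tool is the Banach--Lamperti theorem: for $\alpha\in\left(0,2\right)\cup\left(2,\infty\right)$, every surjective linear isometry $U$ of $L^{\alpha}\left(\mu\right)$ has the form $Uf=h\cdot\left(f\circ T\right)$, where $T$ is a non-singular bimeasurable bijection of $\left(\Omega,\mu\right)$ (defined up to $\mu$-null sets) and $h$ is measurable with $\left|h\right|^{\alpha}=\frac{d\mu\circ T}{d\mu}$ almost surely. Applying this to each $\f u_g$ produces a non-singular transformation $\f a_g$ and a weight $h_g$ with $\left|h_g\right|=\sqrt[\alpha]{\frac{d\mu\circ\f a_g}{d\mu}}$. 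Writing $h_g=c_g\left|h_g\right|$, the unimodular factor $c_g$ is forced to be real-valued, hence $\left\{+1,-1\right\}$-valued, since $\mathbf{X}$, the space $\Omega$, and the special function $f_0$ are all real.

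Next I would promote this pointwise-in-$g$ data to a genuine $G$-structure. Since $g\mapsto\f u_g$ is a homomorphism, composing the Lamperti forms of $\f u_g$ and $\f u_h$ and matching them against the Lamperti form of $\f u_{gh}$ forces the family $\left\{\f a_g\right\}$ to assemble (after fixing the order convention) into a non-singular $G$-action $G\overset{\f a}{\curvearrowright}\left(\Omega,\mu\right)$, yields the chain rule for the Radon--Nikodym derivatives $\frac{d\mu\circ\f a_{gh}}{d\mu}$, and produces precisely the cocycle identity for $c$ displayed before the theorem statement. Because $G$ is countable, all of these almost-sure identities hold simultaneously off a single $\mu$-null set, so honest representatives can be chosen. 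The minimality conditions of Definition \ref{Definition: Spectral Representation} then let me identify $\Omega$ with the common support of the $\f u_g f_0$ and recognize $f_0$ as the special function, giving exactly the stated form $\f u_g f_0=c_g\sqrt[\alpha]{\frac{d\mu\circ\f a_g}{d\mu}}f_0\circ\f a_g$.

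For uniqueness I would rely on Hardin's rigidity of minimal representations \cite{hardin}: any two minimal spectral representations of the same process are related by a measure-space isomorphism intertwining the special functions and the isometry groups up to a unimodular multiplier. Transporting the Lamperti data through such an intertwiner, I would verify that it is precisely an isomorphism of $G$-systems in the sense fixed in the introduction, yielding uniqueness up to that equivalence. The hard part will be the bookkeeping of the third paragraph: reconciling the per-$g$ Lamperti decompositions --- each specified only almost everywhere, and only up to the ambiguity in splitting $h_g$ into modulus and sign --- into a single coherent non-singular action carrying an honest $\left\{+1,-1\right\}$-valued cocycle, and checking that the null-set exceptional behavior is absorbed using the countability of $G$ without disturbing either minimality condition.
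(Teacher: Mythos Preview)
The paper does not supply its own proof of this theorem: immediately after the statement it simply records that Rosiński proved the result for $\mathbb{Z}$ in \cite[Theorem~3.1]{rosinski1995} (with \cite[Remark~2.3]{rosinski1995}), that the argument adapts to general countable groups as in \cite[\textsection 2]{sarkaroy} and \cite[\textsection 4.3]{roy2020}, and that uniqueness is discussed in \cite[Sections~4,~5]{roy2020}. So there is no in-paper proof to compare your proposal against.

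That said, your sketch is the standard route and matches what those references do: start from Hardin's minimal representation, apply the Lamperti/Hardin rigidity of $L^{\alpha}$-isometries for $\alpha\neq 2$ to write each $\f u_g$ as a weighted composition, then use the homomorphism $g\mapsto\f u_g$ to assemble the transformations into a non-singular $G$-action and the sign factors into a $\{+1,-1\}$-cocycle, with countability of $G$ handling the null-set bookkeeping. Your identification of the delicate point---coherently gluing the per-$g$ Lamperti data---is exactly the place where the cited proofs spend their effort. Nothing is missing or wrong in your outline; it is just that the present paper treats the theorem as background and defers entirely to the literature.
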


We mention that Rosiński in \cite[Theorem~3.1]{rosinski1995} (see also \cite[Remark~2.3]{rosinski1995}) showed this for $\mathbb{Z}$, and its proof can be adapted to general countable groups. See \cite[\textsection 2]{sarkaroy} and \cite[\textsection 4.3]{roy2020}. As for the uniqueness and a discussion of the spectral representation on countable groups we refer to \cite[Sections~4, 5]{roy2020}.

\subsection{Ergodicity}

Suppose that $G$ is a countable amenable group and $\mathbf{X}$ is a stationary $G$-process. Let $L^{2}\left(\mathbf{X}\right)$ be the Hilbert space of square-integrable functions on $\mathbf{X}$ with the standard inner product $\ip{\phi_{0}}{\phi_{1}}=\mathbb{E}\left(\phi_{0}\overline{\phi}_{1}\right)$. For $F\Subset G$ let $S_{F}:L^{2}\left(\mathbf{X}\right)\to L^{2}\left(\mathbf{X}\right)$ be the operator
$$S_{F}\phi=\frac{1}{\left|F\right|}\sum_{g\in F}\lambda_{g}\phi.$$
By the von Neumann Ergodic Theorem for amenable groups (see for instance \cite[Theorem~2.1]{weiss}), for every Følner sequence $\left(F_{N}\right)_{N=1}^{\infty}$ for $G$ and $\phi\in L^{2}\left(\mathbf{X}\right)$,
$$\lim_{N\to\infty}S_{F_{N}}\phi=\mathbb{E}\left(\phi\mid\mathrm{Inv}\left(\mathbf{X}\right)\right)\text{ in }L^{2}\left(\mathbf{X}\right).$$
Here $\mathrm{Inv}\left(\mathbf{X}\right)$ is the σ-algebra of events in $\mathbf{X}$ that are invariant to $\lambda$, and $\mathbb{E}\left(\cdot\mid\mathrm{Inv}\left(\mathbf{X}\right)\right)$ is the conditional expectation from $L^{2}\left(\mathbf{X}\right)$ to its subspace of $\mathrm{Inv}\left(\mathbf{X}\right)$-measurable functions. In view of the von Neumann Ergodic Theorem, ergodicity possesses the following different equivalent formulations (cf. \cite[\textsection 2.4]{petersen}).

\begin{definition}[Ergodicity]
\label{Definition: Ergodicity}
A stationary $G$-process $\mathbf{X}$ is called {\it ergodic} if the following equivalent properties hold.
\begin{enumerate}
\item $\mathrm{Inv}\left(\mathbf{X}\right)$ is the trivial σ-algebra modulo events of zero probability;
\item the limit in the von Neumann Ergodic Theorem is constant and equals $\mathbb{E}\left(\phi\right)$ for all $\phi\in L^{2}\left(\mathbf{X}\right)$;
\item for every (equivalently, there exists a) Følner sequence $\left(F_{N}\right)_{N=1}^{\infty}$ for $G$ and every pair of events $A,B$ in $\mathbf{X}$,
$$\lim_{N\to\infty}{\textstyle \frac{1}{\left|F_{N}\right|}\sum_{g\in F_{N}}}\mathbb{P}_{\mathbf{X}}\left(A\cap\lambda_{g}\left(B\right)\right)=\mathbb{P}_{\mathbf{X}}\left(A\right)\mathbb{P}_{\mathbf{X}}\left(B\right).$$
\end{enumerate}
\end{definition}

The spectral structure of $S\alpha S$ processes allows us to provide a more explicit characterization of ergodicity. The following proposition was proved in the course of the proof of \cite[Theorem~2]{podgorski}. It uses the standard fact, due to the Stone--Weierstrass Theorem, that the linear span of $\left\{ \exp\left(i\phi\right):\phi\in\mathrm{Lin}\left(\mathbf{X}\right)\right\}$ is dense in $L^2\left(\mathbf{X}\right)$ and, based on Identity \eqref{eq: alpha norm}, it holds for all countable amenable groups with some obvious modifications.

\begin{proposition}
\label{Proposition: Ergodicity}
A stationary $S\alpha S$ $G$-process $\mathbf{X}$ is ergodic if and only if for every Følner sequence $\left(F_{N}\right)_{N=1}^{\infty}$ for $G$ and every $\phi\in L^{\alpha}\left(\mathbf{X}\right)$,
$$\lim_{N\to\infty}\mathbb{E}\left(\left|S_{F_{N}}\exp\left(i\phi\right)\right|^{2}\right)=\exp\left(-2\no{\phi}_{\alpha}^{\alpha}\right).$$
\end{proposition}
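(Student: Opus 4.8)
The plan is to pass through the mean ergodic theorem and to read both sides of the claimed identity as squared norms of orthogonal projections in $L^{2}\left(\mathbf{X}\right)$. First I would record that, since $\left|\exp\left(i\phi\right)\right|\equiv 1$, the variable $\exp\left(i\phi\right)$ lies in $L^{2}\left(\mathbf{X}\right)$ and $\mathbb{E}\left(\left|S_{F_{N}}\exp\left(i\phi\right)\right|^{2}\right)=\no{S_{F_{N}}\exp\left(i\phi\right)}_{2}^{2}$. By the von Neumann Ergodic Theorem quoted above, for every Følner sequence $\left(F_{N}\right)$ and every $\phi\in L^{\alpha}\left(\mathbf{X}\right)$ one has $S_{F_{N}}\exp\left(i\phi\right)\to P\exp\left(i\phi\right)$ in $L^{2}\left(\mathbf{X}\right)$, where $P=\mathbb{E}\left(\,\cdot\mid\mathrm{Inv}\left(\mathbf{X}\right)\right)$ is the orthogonal projection onto the $\lambda$-invariant vectors. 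In particular $\lim_{N}\mathbb{E}\left(\left|S_{F_{N}}\exp\left(i\phi\right)\right|^{2}\right)=\no{P\exp\left(i\phi\right)}_{2}^{2}$ exists and is \emph{independent of the Følner sequence}, so the quantifier ``for every Følner sequence'' reduces to a single condition. On the other side, Identity \eqref{eq: alpha norm} gives $\mathbb{E}\left(\exp\left(i\phi\right)\right)=\exp\left(-\no{\phi}_{\alpha}^{\alpha}\right)$, whence
$$\exp\left(-2\no{\phi}_{\alpha}^{\alpha}\right)=\left|\mathbb{E}\left(\exp\left(i\phi\right)\right)\right|^{2}=\no{Q\exp\left(i\phi\right)}_{2}^{2},$$
where $Q\phi=\mathbb{E}\left(\phi\right)$ is the orthogonal projection onto the constants.

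With these identifications the assertion becomes $\no{P\exp\left(i\phi\right)}_{2}^{2}=\no{Q\exp\left(i\phi\right)}_{2}^{2}$ for all $\phi$. For the forward implication, if $\mathbf{X}$ is ergodic then by item (2) of Definition \ref{Definition: Ergodicity} the mean ergodic limit equals $\mathbb{E}\left(\phi\right)$ for every $\phi\in L^{2}\left(\mathbf{X}\right)$, i.e.\ $P=Q$; the desired identity then holds verbatim for every $\phi\in L^{\alpha}\left(\mathbf{X}\right)$, with no density argument required.

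For the converse I would exploit that the constant functions are themselves $\lambda$-invariant, so $\mathrm{Range}\left(Q\right)\subseteq\mathrm{Range}\left(P\right)$ and $P-Q$ is again an orthogonal projection; consequently
$$\no{P\exp\left(i\phi\right)}_{2}^{2}=\no{Q\exp\left(i\phi\right)}_{2}^{2}+\no{\left(P-Q\right)\exp\left(i\phi\right)}_{2}^{2}.$$
The hypothesis forces the last term to vanish, so $\left(P-Q\right)\exp\left(i\phi\right)=0$ for every $\phi\in\mathrm{Lin}\left(\mathbf{X}\right)$. Since the linear span of $\left\{\exp\left(i\phi\right):\phi\in\mathrm{Lin}\left(\mathbf{X}\right)\right\}$ is dense in $L^{2}\left(\mathbf{X}\right)$ by Stone--Weierstrass and $P-Q$ is bounded, this yields $P=Q$, which is ergodicity.

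I would note in passing that the computation $\mathbb{E}\left(\exp\left(i\lambda_{g}\phi\right)\overline{\exp\left(i\lambda_{h}\phi\right)}\right)=\psi_{\phi}\left(h^{-1}g\right)$ from Proposition \ref{Proposition: Is Positive Definite} identifies $\mathbb{E}\left(\left|S_{F_{N}}\exp\left(i\phi\right)\right|^{2}\right)$ with the double Følner average $\frac{1}{\left|F_{N}\right|^{2}}\sum_{g,h\in F_{N}}\psi_{\phi}\left(h^{-1}g\right)$ of a positive definite function, which is the bridge to the Følner-convergence machinery of Section \ref{Section: Positive Definite}; but the projection argument above is self-contained. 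The only genuine subtlety is the converse step, namely upgrading equality of \emph{norms} to equality of \emph{projections}. This is precisely where the order relation $Q\le P$ (constants are invariant) is indispensable — without it, equal norms would not force $P\exp\left(i\phi\right)=Q\exp\left(i\phi\right)$ — and where the density of the exponentials is needed to propagate the pointwise identity on $\mathrm{Lin}\left(\mathbf{X}\right)$ to an operator identity on all of $L^{2}\left(\mathbf{X}\right)$.
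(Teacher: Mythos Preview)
Your proof is correct and follows essentially the same approach the paper points to (the paper does not give a self-contained proof but refers to Podg\'orski, noting that the argument rests on the Stone--Weierstrass density of $\{\exp(i\phi):\phi\in\mathrm{Lin}(\mathbf{X})\}$ in $L^2(\mathbf{X})$ together with Identity~\eqref{eq: alpha norm}, which are precisely your ingredients). Your use of the order relation $Q\le P$ to deduce that $P-Q$ is itself an orthogonal projection is a clean way to handle the converse and makes explicit the only nontrivial step.
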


The following theorem is the main result of this section and is the reason for us to establish Theorem \ref{Theorem: P.D. is F.C.}.

\begin{theorem}
\label{Theorem: Ergodicity}
Let $\mathbf{X}$ be a stationary $S\alpha S$ $G$-process. For every two-sided Følner sequence $\left(F_{N}\right)_{N=1}^{\infty}$ for $G$ and every $\phi\in L^{\alpha}\left(\mathbf{X}\right)$,
$$\lim_{N\to\infty}{\textstyle \frac{1}{\left|F_{N}\right|}\sum_{g\in F_{N}}}\exp\left(-\no{\lambda_{g}\phi-\phi}_{\alpha}^{\alpha}\right)=\lim_{N\to\infty}\mathbb{E}\left(\left|S_{F_{N}}\exp\left(i\phi\right)\right|^{2}\right).$$
\end{theorem}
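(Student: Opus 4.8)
The plan is to recognize that both sides of the claimed identity are two different Følner averages of the single positive definite function $\psi_{\phi}$ furnished by Proposition \ref{Proposition: Is Positive Definite}, and to show that both converge to its Følner mean via Theorem \ref{Theorem: P.D. is F.C.}. Recall $\psi_{\phi}\left(g\right)=\exp\left(-\no{\lambda_{g}\phi-\phi}_{\alpha}^{\alpha}\right)$, so the left-hand side is literally $E_{F_{N}}\left(\psi_{\phi}\right)$. Since $\psi_{\phi}$ is real-valued and positive definite, it is weakly almost periodic, and hence by Theorem \ref{Theorem: P.D. is F.C.} it is Følner convergent to its universal mean $E_{G}\left(\psi_{\phi}\right)$; in particular $E_{F_{N}}\left(\psi_{\phi}\right)\to E_{G}\left(\psi_{\phi}\right)$ as $N\to\infty$.

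For the right-hand side I would first expand it into a double average over $F_{N}\times F_{N}$. Writing $\lambda_{g}\exp\left(i\phi\right)=\exp\left(i\lambda_{g}\phi\right)$, one has
$$\mathbb{E}\left(\left|S_{F_{N}}\exp\left(i\phi\right)\right|^{2}\right)=\frac{1}{\left|F_{N}\right|^{2}}\sum_{g,h\in F_{N}}\mathbb{E}\left(\exp\left(i\lambda_{g}\phi\right)\overline{\exp\left(i\lambda_{h}\phi\right)}\right).$$
The identity recorded in the proof of Proposition \ref{Proposition: Is Positive Definite} gives $\mathbb{E}\left(\exp\left(i\lambda_{g}\phi\right)\overline{\exp\left(i\lambda_{h}\phi\right)}\right)=\psi_{\phi}\left(h^{-1}g\right)$, so the display becomes $\left|F_{N}\right|^{-2}\sum_{g,h\in F_{N}}\psi_{\phi}\left(h^{-1}g\right)$. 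Grouping by $h$ and using $\psi_{\phi}\left(h^{-1}g\right)=\lambda_{h^{-1}}\psi_{\phi}\left(g\right)$, this equals
$$\frac{1}{\left|F_{N}\right|}\sum_{h\in F_{N}}E_{F_{N}}\left(\lambda_{h^{-1}}\psi_{\phi}\right),$$
namely the average over $h\in F_{N}$ of the Følner averages of the left-translates $\lambda_{h^{-1}}\psi_{\phi}$.

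Finally I would invoke the uniformity built into Følner convergence. By Definition \ref{Definition: Folner Convergence} applied to $\psi_{\phi}$, we have $\sup_{h\in G}\left|E_{F_{N}}\left(\lambda_{h^{-1}}\psi_{\phi}\right)-E_{G}\left(\psi_{\phi}\right)\right|\to0$, the supremum over $h^{-1}$ being the same as over all of $G$. Hence each inner term is within $o\left(1\right)$ of $E_{G}\left(\psi_{\phi}\right)$ uniformly in $h$, so their average over $h\in F_{N}$ also tends to $E_{G}\left(\psi_{\phi}\right)$. Since the left-hand side already tends to the same value, both limits exist and coincide with $E_{G}\left(\psi_{\phi}\right)$, which is the asserted equality.

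The crux — and the reason Theorem \ref{Theorem: P.D. is F.C.} is indispensable here — is that the right-hand side is a \emph{double} average, so mere Cesàro convergence of $\psi_{\phi}$ along $\left(F_{N}\right)$ would not let one pass to the limit inside the outer average over $h$, since one must control the inner averages of \emph{all} the translates $\lambda_{h^{-1}}\psi_{\phi}$ simultaneously. It is precisely the supremum-over-translates in the definition of Følner convergence, secured for positive definite functions by the Dye–Douglass criterion, that supplies this uniformity. Beyond that application the argument is the routine bookkeeping above, so no further obstacle is expected.
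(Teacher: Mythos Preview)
Your proof is correct and follows essentially the same route as the paper's own argument: expand $\mathbb{E}\left(\left|S_{F_{N}}\exp\left(i\phi\right)\right|^{2}\right)$ as a double sum, rewrite it via Proposition \ref{Proposition: Is Positive Definite} as an average over $h\in F_{N}$ of the translated Følner averages $E_{F_{N}}\left(\lambda_{h^{-1}}\psi_{\phi}\right)$, and then use the uniformity in Definition \ref{Definition: Folner Convergence} (guaranteed by Theorem \ref{Theorem: P.D. is F.C.}) to pass to the limit. Your observation that the supremum over $h^{-1}$ equals the supremum over all of $G$ is exactly what makes the final step work, and your closing paragraph correctly identifies why ordinary Cesàro convergence would not suffice.
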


In view of Proposition \ref{Proposition: Ergodicity} we get the following immediate corollary:

\begin{corollary}
\label{Corollary: Ergodicity}
A stationary $S\alpha S$ $G$-process $\mathbf{X}$ is ergodic if and only if for every two-sided Følner sequence $\left(F_{N}\right)_{N=1}^{\infty}$ for $G$ and every $\phi\in L^{\alpha}\left(\mathbf{X}\right)$,
$$\lim_{N\to\infty}{\textstyle \frac{1}{\left|F_{N}\right|}\sum_{g\in F_{N}}}\exp\left(-\no{\lambda_{g}\phi-\phi}_{\alpha}^{\alpha}\right)=\exp\left(-2\no{\phi}_{\alpha}^{\alpha}\right).$$
\end{corollary}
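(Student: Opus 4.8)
The plan is to rewrite both sides explicitly in terms of the positive definite function $\psi_\phi(g)=\exp(-\no{\lambda_g\phi-\phi}_\alpha^\alpha)$ of Proposition \ref{Proposition: Is Positive Definite}, and then to feed $\psi_\phi$ into Theorem \ref{Theorem: P.D. is F.C.}. By definition of $\psi_\phi$, the left-hand average is exactly $E_{F_N}(\psi_\phi)$. For the right-hand side I would first use that $\lambda_g\exp(i\phi)=\exp(i\lambda_g\phi)$, so that $S_{F_N}\exp(i\phi)=\frac{1}{|F_N|}\sum_{g\in F_N}\exp(i\lambda_g\phi)$; expanding the squared modulus, taking expectation, and invoking the identity $\mathbb{E}(\exp(i\lambda_g\phi)\overline{\exp(i\lambda_h\phi)})=\psi_\phi(h^{-1}g)$ proved inside Proposition \ref{Proposition: Is Positive Definite} yields
$$\mathbb{E}\left(\left|S_{F_N}\exp(i\phi)\right|^2\right)=\frac{1}{|F_N|^2}\sum_{g,h\in F_N}\psi_\phi(h^{-1}g)=\frac{1}{|F_N|}\sum_{h\in F_N}E_{F_N}\!\left(\lambda_{h^{-1}}\psi_\phi\right),$$
where the last equality comes from the substitution $k=h^{-1}g$ in the inner sum, recognizing $\psi_\phi(h^{-1}g)=(\lambda_{h^{-1}}\psi_\phi)(g)$.

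Next I would apply Theorem \ref{Theorem: P.D. is F.C.}. Since $\mathbf{X}$ is stationary $S\alpha S$, the function $\psi_\phi$ is real-valued and positive definite, hence weakly almost periodic, and therefore Følner convergent to its universal mean $E_G(\psi_\phi)$. Writing $\epsilon_N:=\sup_{g\in G}\left|E_{F_N}(\lambda_g\psi_\phi)-E_G(\psi_\phi)\right|$, Definition \ref{Definition: Folner Convergence} gives $\epsilon_N\to 0$ along the prescribed two-sided Følner sequence. Specializing to $g=e$ shows immediately that the left-hand side $E_{F_N}(\psi_\phi)\to E_G(\psi_\phi)$, so that limit exists.

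For the right-hand side I would estimate, by the triangle inequality,
$$\left|\frac{1}{|F_N|}\sum_{h\in F_N}E_{F_N}\!\left(\lambda_{h^{-1}}\psi_\phi\right)-E_G(\psi_\phi)\right|\le\frac{1}{|F_N|}\sum_{h\in F_N}\left|E_{F_N}\!\left(\lambda_{h^{-1}}\psi_\phi\right)-E_G(\psi_\phi)\right|\le\epsilon_N,$$
because each $\lambda_{h^{-1}}\psi_\phi$ with $h\in F_N$ is one of the left translates $\{\lambda_g\psi_\phi:g\in G\}$ already controlled by $\epsilon_N$. Hence the right-hand side also converges to $E_G(\psi_\phi)$, so both limits exist and coincide.

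The main obstacle, and the reason the theorem has content, is precisely the passage from a single Følner average to the \emph{double} average on the right. A bare pointwise ergodic statement (convergence of $E_{F_N}(\lambda_g\psi_\phi)$ for each fixed $g$) would not close the argument, since in the right-hand expression the translation parameter $h$ itself ranges over the growing sets $F_N$. What rescues the estimate is the \emph{uniformity} over $g\in G$ built into Følner convergence; this is exactly why Theorem \ref{Theorem: P.D. is F.C.} had to be established for positive definite functions in the first place, rather than settling for an ordinary ergodic average. The remaining work is the routine re-indexing of the double sum, which I would keep short.
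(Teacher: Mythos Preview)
Your computation is correct, but it proves Theorem~\ref{Theorem: Ergodicity} rather than Corollary~\ref{Corollary: Ergodicity}. You show that the Følner averages $E_{F_N}(\psi_\phi)$ and the double averages $\mathbb{E}\big(|S_{F_N}\exp(i\phi)|^2\big)$ both converge to the common limit $E_G(\psi_\phi)$; this is precisely the content of Theorem~\ref{Theorem: Ergodicity}, and your argument there matches the paper's. However, the Corollary is an \emph{if and only if} statement about ergodicity, and the target value on the right-hand side is the constant $\exp(-2\|\phi\|_\alpha^\alpha)$, not $\mathbb{E}\big(|S_{F_N}\exp(i\phi)|^2\big)$. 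Nowhere in your write-up does the word ``ergodic'' appear, nor does the constant $\exp(-2\|\phi\|_\alpha^\alpha)$.

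The missing link is Proposition~\ref{Proposition: Ergodicity}: $\mathbf{X}$ is ergodic if and only if $\lim_N \mathbb{E}\big(|S_{F_N}\exp(i\phi)|^2\big)=\exp(-2\|\phi\|_\alpha^\alpha)$ for every Følner sequence and every $\phi$. Once you have established (as you did) that this limit coincides with $\lim_N E_{F_N}(\psi_\phi)$, one sentence invoking Proposition~\ref{Proposition: Ergodicity} finishes the Corollary. In the paper the Corollary is literally stated as ``immediate'' from the combination of Theorem~\ref{Theorem: Ergodicity} and Proposition~\ref{Proposition: Ergodicity}; you have supplied the harder half but omitted the easy bridge.
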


\begin{proof}[Proof of Theorem \ref{Theorem: Ergodicity}]
Let $\phi\in L^{\alpha}\left(\mathbf{X}\right)$ and define $\psi_{\phi}:G\to\mathbb{R}$ by
$$\psi_{\phi}\left(g\right)=\exp\left(-\no{\lambda_{g}\phi-\phi}_{\alpha}^{\alpha}\right),\quad g\in G.$$
Let $\left(F_{N}\right)_{N=1}^{\infty}$ be a two-sided Følner sequence for $G$ and we need to show that
$$\lim_{N\to\infty}{\textstyle \frac{1}{\left|F_{N}\right|}\sum_{g\in F_{N}}}\psi_{\phi}\left(g\right)=\lim_{N\to\infty}\mathbb{E}\left(\left|S_{F_{N}}\exp\left(i\phi\right)\right|^{2}\right).$$
By Proposition \ref{Proposition: Is Positive Definite} and Theorem \ref{Theorem: P.D. is F.C.}, $\psi_{\phi}$ is Følner convergent to $E_{G}\left(\psi_{\phi}\right)$. We then need to show that
$$\lim_{N\to\infty}\mathbb{E}\left(\left|S_{F_{N}}\exp\left(i\phi\right)\right|^{2}\right)=E_{G}\left(\psi_{\phi}\right).$$
From the stationarity of $\mathbf{X}$ and the identity \eqref{eq: alpha norm}, for every $N$ we may write
\begin{align*}
    \mathbb{E}\left(\left|S_{F_{N}}\exp\left(i\phi\right)\right|^{2}\right)
    & = {\textstyle \frac{1}{\left|F_{N}\right|^{2}}\sum_{g,h\in F_{N}}}\mathbb{E}\left(\exp\left(i\left(\lambda_{g}\phi-\lambda_{h}\phi\right)\right)\right)\\
    & = {\textstyle \frac{1}{\left|F_{N}\right|^{2}}\sum_{g,h\in F_{N}}}\psi_{\phi}\left(h^{-1}g\right)={\textstyle \frac{1}{\left|F_{N}\right|^{2}}\sum_{g,h\in F_{N}}}\lambda_{h}\psi_{\phi}\left(g\right)\\
    & = {\textstyle \frac{1}{\left|F_{N}\right|}\sum_{h\in F_{N}}}\left({\textstyle \frac{1}{\left|F_{N}\right|}\sum_{g\in F_{N}}}\lambda_{h}\psi_{\phi}\left(g\right)\right)={\textstyle \frac{1}{\left|F_{N}\right|}\sum_{h\in F_{N}}}E_{F_{N}}\left(\lambda_{h}\psi_{\phi}\right).
\end{align*}
By the definition of Følner convergence, for an arbitrary $\epsilon>0$, if $N$ is sufficiently large then
$$\sup_{h\in G}\left|E_{F_{N}}\left(\lambda_{h}\psi_{\phi}\right)-E_{G}\left(\psi_{\phi}\right)\right|<\epsilon.$$
It then follows that
$$\left|\mathbb{E}\left(\left|S_{F_{N}}\exp\left(i\phi\right)\right|^{2}\right)-E_{G}\left(\psi_{\phi}\right)\right|	\leq{\textstyle \frac{1}{\left|F_{N}\right|}\sum_{h\in F_{N}}}\left|E_{G}\left(\lambda_{h}\psi_{\phi}\right)-E_{G}\left(\psi_{\phi}\right)\right|<\epsilon.$$
As $\epsilon>0$ is arbitrary the proof is complete.
\end{proof}

\subsection{Weak-mixing}
\label{Subsection: Weak-Mixing}

The following different formulations of weak-mixing are well-known to be equivalent. It is also well-known that in order to obtain weak-mixing it is enough to verify each of the following formulations on one Følner sequence (cf. \cite[\textsection 2.6]{petersen}).

\begin{definition}[Weak-mixing]
\label{Definition: Weak-mixing}
A stationary $G$-process $\mathbf{X}$ is called {\it weakly-mixing} if the following equivalent properties hold for every Følner sequence $\left(F_{N}\right)_{N=1}^{\infty}$ for $G$.
\begin{enumerate}
\item For every pair of events $A,B$ in $\mathbf{X}$,
$$\lim_{N\to\infty}{\textstyle \frac{1}{\left|F_{N}\right|}\sum_{g\in F_{N}}}\left|\mathbb{P}_{\mathbf{X}}\left(A\cap\lambda_{g}\left(B\right)\right)-\mathbb{P}_{\mathbf{X}}\left(A\right)\mathbb{P}_{\mathbf{X}}\left(B\right)\right|=0;$$
\item for every $\phi_{0},\phi_{1}\in L^{2}\left(\mathbf{X}\right)$,
$$\lim_{N\to\infty}{\textstyle \frac{1}{\left|F_{N}\right|}\sum_{g\in F_{N}}}\left|\mathbb{E}\left(\phi_{0}\overline{\lambda_{g}\phi_{1}}\right)-\mathbb{E}\left(\phi_{0}\right)\overline{\mathbb{E}\left(\phi_{1}\right)}\right|=0;$$
\item for every $\phi\in L^{2}\left(\mathbf{X}\right)$,
$$\lim_{N\to\infty}{\textstyle \frac{1}{\left|F_{N}\right|}\sum_{g\in F_{N}}}\left|\mathbb{E}\left(\phi\overline{\lambda_{g}\phi}\right)-\left|\mathbb{E}\left(\phi\right)\right|^{2}\right|=0.$$
\end{enumerate}
\end{definition}

A key lemma that we are about to use is the following real analysis lemma due to Podgórski \& Weron. While the lemma was formulated for $\mathbb{N}$, the proof appears in \cite{podwer} remains valid for every countable set $G$ with an increasing sequence of finite sets $\left(F_{N}\right)_{N=1}^{\infty}$, and the Følner property is not needed.

\begin{lemma}[Podgórski \& Weron]
\label{Lemma: Podgórski-Weron}
Let $\left(F_{N}\right)_{N=1}^{\infty}$ be an increasing sequence of finite sets and let $G=F_{1}\cup F_{2}\cup\dotsm$. For a real-valued bounded function $\psi:G\to\mathbb{R}$ the following are equivalent.
\begin{enumerate}
\item The limit $\lim_{N\to\infty}\frac{1}{\left|F_{N}\right|}\sum_{g\in F_{N}}\left|\exp\left(c\psi\left(g\right)\right)-1\right|=0$ holds for all $c>0$.
\item The limit $\lim_{N\to\infty}\frac{1}{\left|F_{N}\right|}\sum_{g\in F_{N}}\exp\left(c\psi\left(g\right)\right)=1$ holds for all $c>0$.
\end{enumerate}
\end{lemma}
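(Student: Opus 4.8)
The plan is to reduce the whole equivalence to a single elementary identity that connects the averages at the parameters $c$ and $2c$. Throughout I treat $\exp(c\psi)$, $\left|\exp(c\psi)-1\right|$ and $(\exp(c\psi)-1)^2$ as functions $G\to\mathbb{R}$ and apply the averaging notation $E_{F}$ from above, so that (2) reads $\lim_{N\to\infty}E_{F_N}(\exp(c\psi))=1$ and (1) reads $\lim_{N\to\infty}E_{F_N}(\left|\exp(c\psi)-1\right|)=0$, for all $c>0$. The implication $(1)\Rightarrow(2)$ is immediate and requires no new idea: by the triangle inequality,
$$\left|E_{F_N}(\exp(c\psi))-1\right|=\left|E_{F_N}(\exp(c\psi)-1)\right|\le E_{F_N}\!\left(\left|\exp(c\psi)-1\right|\right),$$
so if the right-hand side tends to $0$ for every $c>0$, then so does the left-hand side.

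The substantive direction is $(2)\Rightarrow(1)$, and here I would first record the pointwise identity $(\exp(ct)-1)^2=\exp(2ct)-2\exp(ct)+1$, valid for every real $t$. Averaging it over $F_N$ against the normalized counting measure yields
$$E_{F_N}\!\left((\exp(c\psi)-1)^2\right)=E_{F_N}(\exp(2c\psi))-2\,E_{F_N}(\exp(c\psi))+1.$$
Now I apply hypothesis (2) twice, once at the parameter $2c$ and once at $c$, both of which are positive and hence covered: the right-hand side converges to $1-2\cdot 1+1=0$ as $N\to\infty$. Thus the averaged squares $E_{F_N}((\exp(c\psi)-1)^2)$ vanish in the limit for every $c>0$.

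To pass from the averaged squares to the averaged absolute values demanded by (1), I would finish with the Cauchy--Schwarz (equivalently, Jensen) inequality for the normalized counting measure on $F_N$,
$$\left(E_{F_N}\!\left(\left|\exp(c\psi)-1\right|\right)\right)^2\le E_{F_N}\!\left((\exp(c\psi)-1)^2\right),$$
whose right-hand side was just shown to tend to $0$. Hence $E_{F_N}(\left|\exp(c\psi)-1\right|)\to 0$ for every $c>0$, which is exactly (1). The only real content is the observation that squaring $\exp(c\psi)-1$ linearizes into the values at $c$ and $2c$, so I do not anticipate a genuine obstacle. I note moreover that the hypotheses that $\psi$ is bounded and that $(F_N)_{N=1}^{\infty}$ is increasing and exhausts $G$ play no role in this argument, since each expression is a finite average over $F_N$; the proof goes through verbatim for any sequence of nonempty finite sets, in accordance with the remark preceding the statement that the Følner property is not needed.
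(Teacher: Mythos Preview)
Your proof is correct. The paper does not supply its own argument for this lemma; it simply cites the original source \cite{podwer} and remarks that the proof there carries over verbatim to the present setting. Your argument is therefore a self-contained replacement rather than a reproduction: the trick of expanding $(\exp(c\psi)-1)^2=\exp(2c\psi)-2\exp(c\psi)+1$ so as to reduce the averaged absolute values at parameter $c$ to the averaged values at parameters $c$ and $2c$, followed by Cauchy--Schwarz on the normalized counting measure, is exactly the standard route and is cleanly executed. Your closing observation that neither the boundedness of $\psi$ nor the monotonicity and exhaustiveness of $(F_N)$ is actually used is also correct and in line with the paper's remark that the F{\o}lner property plays no role here.
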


We are now ready to prove Theorem \ref{Theorem: Ergodicity iff Weak-Mixing}.

\begin{proof}[Proof of Theorem \ref{Theorem: Ergodicity iff Weak-Mixing}]
It is obvious that weak-mixing implies ergodicity. Suppose that $\mathbf{X}$ is ergodic, so that by Corollary \ref{Corollary: Ergodicity} for every two-sided Følner sequence $\left(F_{N}\right)_{N=1}^{\infty}$ for $G$ and $\phi\in L^{\alpha}\left(\mathbf{X}\right)$,
\begin{equation}
\label{eq:ErgodicW-M}
\lim_{N\to\infty}{\textstyle \frac{1}{\left|F_{N}\right|}\sum_{g\in F_{N}}}\exp\left(2\no{\phi}_{\alpha}^{\alpha}-\no{\lambda_{g}\phi-\phi}_{\alpha}^{\alpha}\right)=1.
\end{equation}
For $\phi\in L^{\alpha}\left(\mathbf{X}\right)$ let $\psi_{\phi}:G\to\mathbb{R}$ be the function
$$\psi_{\phi}\left(g\right)=2\no{\phi}_{\alpha}^{\alpha}-\no{\lambda_{g}\phi-\phi}_{\alpha}^{\alpha}.$$
Since $\psi_{c^{1/\alpha}\phi}=c\psi_{\phi}$ for every $c>0$, applying Corollary \ref{Corollary: Ergodicity} and the limit \eqref{eq:ErgodicW-M} we obtain that
$$\lim_{N\to\infty}E_{F_{N}}\left(\exp\left(c\psi_{\phi}\right)\right)=1\text{ for all }c>0.$$
By Lemma \ref{Lemma: Podgórski-Weron} it follows that, in particular,
$$\lim_{N\to\infty}E_{F_{N}}\left|\exp\left(\psi_{\phi}\right)-1\right|=0.$$
But for every $g\in G$ we have
\begin{align*}
\left|\exp\left(\psi_{\phi}\left(g\right)\right)-1\right|
& = \exp\left(2\no{\phi}_{\alpha}^{\alpha}\right)\left|\exp\left(-\no{\lambda_{g}\phi-\phi}_{\alpha}^{\alpha}\right)-\exp\left(-2\no{\phi}_{\alpha}^{\alpha}\right)\right|\\
& = \exp\left(2\no{\phi}_{\alpha}^{\alpha}\right)\left|\mathbb{E}\left(\exp\left(i\left(\lambda_{g}\phi-\phi\right)\right)\right)-\left|\mathbb{E}\left(\exp\left(i\phi\right)\right)\right|^{2}\right|.
\end{align*}
It then follows that
$$\lim_{N\to\infty}{\textstyle \frac{1}{\left|F_{N}\right|}\sum_{g\in F_{N}}}\left|\mathbb{E}\left(\exp\left(i\left(\lambda_{g}\phi-\phi\right)\right)\right)-\left|\mathbb{E}\left(\exp\left(i\phi\right)\right)\right|^{2}\right|=0.$$
This establishes the weak-mixing property for the functions $\exp\left(i\phi\right)$ with $\phi\in \mathrm{Lin}\left(\mathbf{X}\right)$. By a standard approximation argument using the Stone--Weierstrass Theorem this holds for all $\phi\in L^{2}\left(\mathbf{X}\right)$ so that $\mathbf{X}$ is weakly-mixing.
\end{proof}

\section{Positive-null decomposition in amenable groups}
\label{Section: Positive-Null Decomposition}

Here we show that an analogue of Krengel's description of the positive-null decomposition for $\mathbb{Z}$ as in \cite[\textsection 3.4]{krengel1985} (cf. \cite[Theorem~1.4.4]{aaronson}), holds in countable amenable groups.

\begin{remark}
After the first version of this paper was published I found that the positive-null decomposition for amenable groups was proved by Grabarnik \& Hrushovski \cite{grabarnikhrushovski1995}. Their proof uses the machinery of weakly-wandering functions, similarly to Krengel \cite{krengel1985} and Hajian \& Kakutani \cite{hajiankakutani}. The following proof is different, and it provides a characterization of the positive-null decomposition using Følner sequences. This characterization will be important to our later proofs.
\end{remark}

Let $\left(\Omega,\mu\right)$ be a standard σ-finite measure space. Recall that a sequence of measurable functions $\left(f_{N}\right)_{N=1}^{\infty}$ is said to be {\it convergent in measure}, or {\it convergent stochastically}, to a measurable function $f$ if, for every Borel set $E\subset\Omega$ with $\mu\left(E\right)<\infty$ and every $\epsilon>0$,
$$\lim_{N\to\infty}\mu\left(\omega\in E:\left|f_{N}\left(\omega\right)-f\left(\omega\right)\right|>\epsilon\right)=0.$$
See \cite[Definition~4.8]{krengel1985}. Sometimes we denote convergence in measure by $f_{N}\xrightarrow[N\to\infty]{\mu}f$. It is an elementary fact that $f_{N}\xrightarrow[N\to\infty]{\mu}f$ if and only if every subsequence of $\left(f_{N}\right)_{N=1}^{\infty}$ has a further subsequence that converges to $f$ almost surely with respect to $\mu$. In particular, convergence in measure is preserved when passing to an absolutely continuous measure.

Let $G\overset{\f a}{\curvearrowright}\left(\Omega,\mu\right)$ be a $G$-system. As the measure $\mu$ is understood, we abbreviate the Radon-Nikodym cocycle associated to $\f a$ by
$$\f a_{g}'\left(\omega\right)=\frac{d\mu\circ\f a_{g}}{d\mu}\left(\omega\right)\text{ for }g\in G.$$
Denote by $G\overset{\widehat{\f a}}{\curvearrowright}L^{1}\left(\Omega,\mu\right)$ the {\it dual action} of $\f a$ defined by
\begin{equation}
\label{eq: dual action}
\widehat{\mathfrak{a}}_{g}f\left(\omega\right)=\f a_{g}'\left(\omega\right)f\circ\mathfrak{a}_{g}\left(\omega\right),\quad g\in G,\quad f\in L^{1}\left(\mu\right).
\end{equation}
Note that $\widehat{\f a}$ is an anti-action by isometries. It is dual to the Koopman action in the sense that, for every $f\in L^{1}\left(\mu\right)$ and $b\in L^{\infty}\left(\mu\right)$,
$$\int_{\Omega}\widehat{\f a}_{g}\left(f\right)bd\mu=\int_{\Omega}fb\circ\f a_{g}d\mu,\quad g\in G.$$

Recall that an {\it a.c.i.p} for the $G$-system $G\overset{\f a}{\curvearrowright}\left(\Omega,\mu\right)$ is a Borel probability measure on $\Omega$ that is both: absolutely continuous with respect to $\mu$ and is $\f a$-invariant. A $G$-system is said to be {\it null} if it has no a.c.i.p. There is a natural correspondence between a.c.i.p.'s for $G\overset{\f a}{\curvearrowright}\left(\Omega,\mu\right)$ and real-valued non-negative functions $f\in L^{1}\left(\mu\right)$ with $\no f_{L^{1}\left(\mu\right)}=1$ that are $\widehat{\f a}$-invariant, namely $\widehat{\f a}_{g}f=f$ for all $g\in G$. This correspondence is given by identifying an a.c.i.p. $\nu$ with the Radon-Nikodym derivative $d\nu/d\mu\in L^1\left(\Omega,\mu\right)$. We now establish a Krengel-type criterion for the existence of a.c.i.p.

\begin{theorem}
\label{Theorem: Krengel Amenable}
Let $G$ be a countable amenable group and $G\overset{\f a}{\curvearrowright}\left(\Omega,\mu\right)$ a non-singular $G$-system such that $\mu$ is a probability measure. Then a necessary and sufficient condition for the system {\rm not} to have an a.c.i.p. is that, for every Følner sequence $\left(F_{N}\right)_{N=1}^{\infty}$ for $G$ and every $f\in L^{1}\left(\mu\right)$,
$${\textstyle \frac{1}{\left|F_{N}\right|}\sum_{g\in F_{N}}}\widehat{\mathfrak{a}}_{g}f\xrightarrow[N\to\infty]{}0\text{ in measure}.$$
\end{theorem}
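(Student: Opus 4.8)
The plan is to prove the two implications separately, with the amenability of $G$ entering through a Følner-averaging/invariant-mean argument. The sufficiency (the averaging condition implies no a.c.i.p.) is immediate: were there an a.c.i.p., then by the correspondence recalled above there would be a nonnegative $\widehat{\f a}$-invariant density $f_0\in L^1(\mu)$ with $\no{f_0}_{L^1(\mu)}=1$, and then $\frac1{\left|F_N\right|}\sum_{g\in F_N}\widehat{\f a}_g f_0=f_0$ for every $N$ and every Følner sequence, which does not converge to $0$ in measure since $f_0\neq0$. So I concentrate on necessity: assuming the system has \emph{no} a.c.i.p., I must show $A_N f:=\frac1{\left|F_N\right|}\sum_{g\in F_N}\widehat{\f a}_g f\to 0$ in measure for every (two-sided) Følner sequence $(F_N)$ and every $f\in L^1(\mu)$.

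Since $\widehat{\f a}_g$ is a positive isometry of $L^1(\mu)$ one has $|A_N f|\le A_N|f|$, so it suffices to treat $0\le f$ with $\no f_{L^1(\mu)}=1$; note then $A_N f\ge0$ and $\int_\Omega A_N f\,d\mu=1$. Arguing by contradiction, suppose $A_N f\not\to0$ in measure; passing to a subsequence (still indexed by $N$) there are $\epsilon_0,\delta_0>0$ with $\mu\left(A_N f>\epsilon_0\right)\ge\delta_0$ for all $N$. The functionals $\Lambda_N(b)=\int_\Omega (A_N f)\,b\,d\mu$ on $L^\infty(\mu)$ are positive with $\Lambda_N(1)=1$, hence by Banach--Alaoglu they admit a weak-$*$ cluster point $\Lambda$, a mean on $L^\infty(\mu)$. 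I first check that $\Lambda$ is $\f a$-invariant. As $\widehat{\f a}$ is an anti-action, $\widehat{\f a}_g A_N f=\frac1{\left|F_N\right|}\sum_{k\in F_N g}\widehat{\f a}_k f$, so $\no{\widehat{\f a}_g A_N f-A_N f}_{L^1(\mu)}\le \frac{\left|F_N g\triangle F_N\right|}{\left|F_N\right|}\to0$ by the right-Følner property. Combined with the duality $\int_\Omega (A_N f)\,(b\circ\f a_g)\,d\mu=\int_\Omega \widehat{\f a}_g(A_N f)\,b\,d\mu$, letting $N$ run along the cluster subnet yields $\Lambda(b\circ\f a_g)=\Lambda(b)$ for all $g\in G$ and $b\in L^\infty(\mu)$.

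Now invoke the Yosida--Hewitt decomposition $\Lambda=\Lambda_n+\Lambda_s$ into its normal (countably additive, $\mu$-absolutely continuous) part $\Lambda_n(b)=\int_\Omega b\,h\,d\mu$ with $0\le h\in L^1(\mu)$, and its purely singular part $\Lambda_s$. The adjoint of $\widehat{\f a}_g$, being induced by the non-singular automorphism $\f a_g$, is a lattice automorphism of $(L^\infty(\mu))^*$ and therefore preserves the Yosida--Hewitt bands; invariance of $\Lambda$ thus descends to $\Lambda_n$, giving $\widehat{\f a}_g h=h$ for every $g$. Hence $h$ is a nonnegative $\widehat{\f a}$-invariant element of $L^1(\mu)$, and if $h\neq0$ then $h/\no h_{L^1(\mu)}$ is the density of an a.c.i.p., contradicting our assumption. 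It remains to prove $h\neq0$, which is where the failure of convergence in measure is used. If $h=0$, then $\Lambda$ is purely singular, so for $\eta=\delta_0/2$ there is a set $E$ with $\mu(E)<\eta$ and $\Lambda(\mathbf 1_{E^c})=0$. On the other hand, for every $N$ we have $\int_{E^c}A_N f\,d\mu\ge\epsilon_0\,\mu\big(E^c\cap\{A_Nf>\epsilon_0\}\big)\ge\epsilon_0(\delta_0-\mu(E))\ge\epsilon_0\delta_0/2$, so every cluster value of $\int_{E^c}A_N f\,d\mu$ is at least $\epsilon_0\delta_0/2$; in particular $\Lambda(\mathbf 1_{E^c})\ge\epsilon_0\delta_0/2>0$, a contradiction. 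Thus $h\neq0$, and necessity follows.

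The main obstacle is exactly this last step: showing the normal part $h$ is nonzero. Everything preceding it is soft (producing an invariant mean, verifying its invariance via the Følner property, and splitting it by Yosida--Hewitt); the real content is that a quantitative failure of convergence in measure, encoded by $\mu(A_N f>\epsilon_0)\ge\delta_0$, prevents the limiting mean from being swept entirely onto sets of arbitrarily small $\mu$-measure. This is precisely the dichotomy between absolutely continuous (positive) and singular (null) asymptotic behaviour of the averages. A secondary point to handle with care is the left/right Følner asymmetry: the asymptotic invariance $\no{\widehat{\f a}_g A_N f-A_N f}_{L^1(\mu)}\to0$ relies on the right translates $F_N g$, so the argument is carried out along two-sided Følner sequences, which exist for every countable amenable group; this is the precise sense in which the Følner hypothesis is used.
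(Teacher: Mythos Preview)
Your proof is correct and takes a genuinely different route from the paper's. The paper argues directly: given no a.c.i.p.\ and any subsequence of $(A_N f)$, Koml\'os' theorem produces a further subsequence $(f_k)$ that Ces\`aro-converges a.s.\ to some $\overline f\ge 0$; the F\o lner property forces $\overline f$ to be $\widehat{\f a}$-invariant, hence $\overline f=0$, and an elementary lemma (non-negative, Ces\`aro-a.s.\ convergent to $0$ $\Rightarrow$ a subsequence converges a.s.\ to $0$) finishes. Your approach is dual: the means $\Lambda_N(b)=\int(A_Nf)b\,d\mu$ have an $\f a$-invariant weak-$*$ cluster point $\Lambda$ in $(L^\infty)^*$, the Yosida--Hewitt decomposition (preserved because the adjoint Koopman operators are lattice automorphisms) splits off a normal part $h\in L^1$ that is $\widehat{\f a}$-invariant, and the quantitative failure $\mu(A_Nf>\epsilon_0)\ge\delta_0$ prevents $\Lambda$ from being purely singular, so $h\neq 0$. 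The paper's route is more self-contained (Koml\'os plus a short lemma), while yours makes structurally transparent that ``no a.c.i.p.'' is exactly the absence of countably-additive mass in any limiting invariant mean.

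One small correction to your closing remark: you do not need two-sided F\o lner sequences. Expanding termwise via the duality gives $\Lambda_N(b)=\frac1{|F_N|}\sum_{g\in F_N}\int f\,(b\circ\f a_g)\,d\mu$, whence
\[
\bigl|\Lambda_N(b\circ\f a_h)-\Lambda_N(b)\bigr|\le\frac{|hF_N\triangle F_N|}{|F_N|}\no f_{L^1(\mu)}\no b_{\infty}\xrightarrow[N\to\infty]{}0
\]
by the \emph{left} F\o lner property alone, so the theorem holds for arbitrary (left) F\o lner sequences as stated. A cosmetic point: the exact claim ``$\Lambda(1_{E^c})=0$'' for purely singular $\Lambda$ follows by the standard refinement (take $E_n$ with $\mu(E_n)<2^{-n}$, $\Lambda(1_{E_n^c})<2^{-n}$ and set $E=\bigcup_{n\ge k}E_n$); alternatively, $\Lambda(1_{E^c})<\epsilon_0\delta_0/4$ already yields the contradiction.
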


The following proof of Theorem \ref{Theorem: Krengel Amenable} is inspired by the arguments in the proof of Aaronson to Krengel's theorem for $\mathbb{Z}$ in \cite[Theorem~1.4.4]{aaronson}. The property of $L^2$ that is presented in \cite[Lemma~1.4.5]{aaronson} will be replaced, as suggested in \cite[\textsection 1.4]{aaronson}, by the Komlós Theorem \cite[Theorem~1]{komlos} that is applied to $L^1$.

For the ease of notations let us say that a sequence $\left(x_{n}\right)_{n=1}^{\infty}$ is {\it Cesàro convergent} to $x$ if the sequence $\left(\frac{1}{N}\sum_{n=1}^{N}x_{n}\right)_{N=1}^{\infty}$ is convergent to $x$ as $N\to\infty$, and in each instance of this terminology we specify the mode of convergence we refer to.

\begin{proof}[Proof of Theorem \ref{Theorem: Krengel Amenable}]
One implication is elementary: if $\nu$ is an a.c.i.p. then $f:=d\nu/d\mu\in L^{1}\left(\mu\right)$ satisfies $\no f_{L^{1}\left(\mu\right)}=1$ and is $\widehat{\f a}$-invariant, so that its averages along every Følner sequence can not converge to $0$ in measure.

Let us show the converse. Suppose that there is no a.c.i.p., let $f\in L^{1}\left(\mu\right)$ and some Følner sequence $\left(F_{N}\right)_{N=1}^{\infty}$ for $G$. We then need to show that
$$A_{N}f:={\textstyle \frac{1}{\left|F_{N}\right|}\sum_{g\in F_{N}}}\widehat{\f a}_{g}f\xrightarrow[N\to\infty]{\mu}0.$$
Since $\left|A_{N}f\right|\leq A_{N}\left|f\right|$ we may replace $f$ by $\left|f\right|$, so we assume without loss of generality that $f$ is real-valued and non-negative. Let $\left(A_{N_{n}}f\right)_{k=1}^{\infty}$ be an arbitrary subsequence and we show that it has a further subsequence converging to $0$ almost surely. Note that, since $\widehat{\f a}$ is an action by isometries of $L^{1}\left(\mu\right)$,
$$\no{A_{N}f}_{L^{1}\left(\mu\right)}\leq\no f_{L^{1}\left(\mu\right)}\text{ for all }N.$$
Then, as $\mu$ is a probability measure, by the Komlós Theorem \cite[Theorem~1]{komlos} there is a subsequence of $\left(A_{N_{n}}f\right)_{n=1}^{\infty}$ that is Cesàro convergent almost surely. Denote this subsequence by $\left(f_{k}\right)_{k=1}^{\infty}$ where $f_{k}:=A_{N_{n_{k}}}f$, and denote its Cesàro almost sure limit by $\overline{f}\in L^{1}\left(\mu\right)$. Note that since $f$ is real-valued and non-negative, then so is $\overline{f}$.

We claim that $\overline{f}$ is $\widehat{\f a}$-invariant. In order to see that, let $h\in G$ be arbitrary and for every $K$ we can bound
\begin{equation}
\label{eq: a-invariant}
\left|\overline{f}-\widehat{\f a}_{h}\overline{f}\right|\leq\left|\overline{f}-{\textstyle \frac{1}{K}\sum_{k=1}^{K}}f_{k}\right|+\left|{\textstyle \frac{1}{K}\sum_{k=1}^{K}}\left(f_{k}-\widehat{\f a}_{h}f_{k}\right)\right|+\left|{\textstyle \frac{1}{K}\sum_{k=1}^{K}}\widehat{\f a}_{h}f_{k}-\widehat{\f a}_{h}\overline{f}\right|.
\end{equation}
The first term vanishes as $K\to\infty$ almost surely by the definition of $\overline{f}$. The last term is proportional, almost surely, to
$$\left|{\textstyle \frac{1}{K}\sum_{k=1}^{K}}f_{k}\circ\f a_{h}-\overline{f}\circ\f a_{h}\right|.$$
Hence, by the non-singularity of $\f a_h$, it also vanishes as $K\to\infty$ almost surely by the definition of $\overline{f}$. We show that the middle term vanishes almost surely along a subsequence $\left(K_L\right)_{L=1}^{\infty}$. Considering the $L^1\left(\mu\right)$-norm of the middle term of \eqref{eq: a-invariant}, by the Følner property we have
\begin{align*}
    \no{f_{k}-\widehat{\f a}_{h}f_{k}}_{L^{1}\left(\mu\right)}
    & = {\textstyle \frac{1}{\left|F_{N_{n_{k}}}\right|}}\no{{\textstyle \sum_{g\in F_{N_{n_{k}}}}}\widehat{\f a}_{g}f-{\textstyle \sum_{g\in hF_{N_{n_{k}}}}}\widehat{\f a}_{g}f}_{L^{1}\left(\mu\right)} \\
    & = {\textstyle \frac{1}{\left|F_{N_{n_{k}}}\right|}}\no{{\textstyle \sum_{g\in F_{N_{n_{k}}}\backslash hF_{N_{n_{k}}}}}\widehat{\f a}_{g}f-{\textstyle \sum_{g\in hF_{N_{n_{k}}}\backslash F_{N_{n_{k}}}}}\widehat{\f a}_{g}f}_{L^{1}\left(\mu\right)} \\
    & \leq {\textstyle \frac{\left|hF_{N_{n_{k}}}\triangle F_{N_{n_{k}}}\right|}{\left|F_{N_{n_{k}}}\right|}}\no f_{L^{1}\left(\mu\right)}\xrightarrow[k\to\infty]{}0.
\end{align*}
It follows by the Cesàro summation of real numbers that
$$\no{{\textstyle \frac{1}{K}\sum_{k=1}^{K}}\left(f_{k}-\widehat{\f a}_{h}f_{k}\right)}_{L^{1}\left(\mu\right)}\leq{\textstyle \frac{1}{K}\sum_{k=1}^{K}}\no{f_{k}-\widehat{\f a}_{h}f_{k}}_{L^{1}\left(\mu\right)}\xrightarrow[K\to\infty]{}0.$$
As $\mu$ is a probability measure, there can be found a subsequence $\left(K_L\right)_{L=1}^{\infty}$ of the positive integers such that
$${\textstyle \frac{1}{K_{L}}\sum_{k=1}^{K_{L}}}\left(f_{k}-\widehat{\f a}_{h}f_{k}\right)\xrightarrow[L\to\infty]{}0\text{ almost surely.}$$
Thus, from the bound \eqref{eq: a-invariant} we obtained that $\overline{f}=\widehat{\f a}_{h}\overline{f}$ almost surely with an arbitrary $h$, showing that $\overline{f}$ is $\widehat{\f a}$-invariant. Since $\overline{f}$ is real-valued and non-negative, the assumption that there is no a.c.i.p. implies that necessarily $\overline{f}=0$, as otherwise it would be, up to a normalization by a positive constant, the Radon-Nikodym derivative of an a.c.i.p. Thus, $\left(f_{k}\right)_{k=1}^{\infty}$ is a sequence of real-valued non-negative functions that is Cesàro convergent to $0$ almost surely and, by the lemma appears in Appendix \ref{Appendix: Lemma on Almost-Sure}, it has a subsequence converging to $0$ almost surely. This establishes the desired convergence in measure.
\end{proof}

From Theorem \ref{Theorem: Krengel Amenable} we obtain the following positive-null decomposition. Note that while Theorem \ref{Theorem: Krengel Amenable} is restricted to $G$-systems with probability measures, the following positive-null decomposition holds for general $G$-system with σ-finite measure.

\begin{theorem}[Positive-Null Decomposition]
\label{Theorem: Positive-Null Decomposition}
Let $G$ be a countable amenable group and $G\overset{\f a}{\curvearrowright}\left(\Omega,\mu\right)$ a $G$-system. There is a partition
$$\Omega=\mathcal{P}\cup\mathcal{N}$$
of $\Omega$ into disjoint $\f a$-invariant sets $\mathcal{P}$ and $\mathcal{N}$ such that the following properties hold.
\begin{enumerate}
\item There is a special real-valued non-negative $\widehat{\f a}$-invariant function $F\in L^{1}\left(\mu\right)$ such that $\mathcal{P}=\left\{ F>0\right\}$ and $\mathcal{N}=\left\{ F=0\right\}$;
\item the system $G\overset{\f a}{\curvearrowright}\left(\mathcal{P},\mu\mid_{\mathcal{P}}\right)$ has the a.c.i.p. $F\left(\omega\right)d\mu\left(\omega\right)$ which is mutually absolutely continuous with $\mu\mid_{\mathcal{P}}$;
\item the system $G\overset{\f a}{\curvearrowright}\left(\mathcal{N},\mu\mid_{\mathcal{N}}\right)$ has no a.c.i.p.;
\item for every Følner sequence $\left(F_{N}\right)_{N=1}^{\infty}$ for $G$ and every $f\in L^{1}\left(\mu\right)$,
$$\frac{1}{\left|F_{N}\right|}\sum_{g\in F_{N}}\widehat{\f a}_{g}f\xrightarrow[N\to\infty]{}0\text{ in measure on }\mathcal{N}.$$
\end{enumerate}
\end{theorem}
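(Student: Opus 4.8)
The plan is to reduce to a probability measure, construct a maximal $\widehat{\f a}$-invariant density by exhaustion, and then invoke Theorem~\ref{Theorem: Krengel Amenable} on the resulting null piece. First I would fix a probability measure $\mathbb{P}$ equivalent to $\mu$ (possible since $\mu$ is $\sigma$-finite), noting that both ``admitting an a.c.i.p.'' and ``having Cesàro averages that converge to $0$ in measure'' are invariants of the measure class. Writing $w=d\mathbb{P}/d\mu>0$, a chain-rule computation for $\f a_g'$ shows that the two dual actions are conjugate, $\widehat{\f a}_g^{\mathbb{P}}(f/w)=(\widehat{\f a}_g^{\mu}f)/w$, so the averages $A_N^{\mathbb{P}}f:=\tfrac{1}{\left|F_N\right|}\sum_{g\in F_N}\widehat{\f a}_g^{\mathbb{P}}f$ and $A_N^{\mu}f$ transform the same way. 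This lets me pass freely between $\mu$ and $\mathbb{P}$.

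Next I would consider the convex cone $\mathcal{I}$ of non-negative $\widehat{\f a}^{\mathbb{P}}$-invariant functions in $L^1\left(\mathbb{P}\right)$; since $\widehat{\f a}^{\mathbb{P}}$ acts by isometries, $\mathcal{I}$ is closed under addition and under $L^1$-limits. Put $s=\sup\{\mathbb{P}\left(\{h>0\}\right):h\in\mathcal{I}\}\le 1$, choose $h_n\in\mathcal{I}$ with $\mathbb{P}\left(\{h_n>0\}\right)\to s$, and set $F_0=\sum_n 2^{-n}h_n/\no{h_n}_{L^1\left(\mathbb{P}\right)}\in\mathcal{I}$. Then $\{F_0>0\}=\bigcup_n\{h_n>0\}$, so $\mathbb{P}\left(\{F_0>0\}\right)=s$, i.e.\ $F_0$ is an invariant density of maximal support. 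Define $\mathcal{P}=\{F_0>0\}$, $\mathcal{N}=\{F_0=0\}$ and $F=F_0 w$. Because $\f a_g'>0$, the invariance $\widehat{\f a}_g^{\mathbb{P}}F_0=F_0$ forces $\f a_g^{-1}\mathcal{P}=\mathcal{P}$ almost surely, so $\mathcal{P}$ and $\mathcal{N}$ are $\f a$-invariant; the conjugacy above makes $F$ a non-negative $\widehat{\f a}^{\mu}$-invariant element of $L^1\left(\mu\right)$ with $\{F>0\}=\mathcal{P}$, and $F\,d\mu=F_0\,d\mathbb{P}$ is an $\f a$-invariant measure equivalent to $\mu\mid_{\mathcal{P}}$. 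This gives properties (1) and (2).

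For (3), maximality is exactly the exhaustion I need: if the restriction to $\mathcal{N}$ carried an a.c.i.p.\ $\nu$, then (extending its density by zero, using that $\mathcal{N}$ is invariant) I would obtain a non-zero $g\in\mathcal{I}$ with $\{g>0\}\subseteq\mathcal{N}$ disjoint from $\mathcal{P}$, whence $F_0+g\in\mathcal{I}$ has support of measure $s+\mathbb{P}\left(\{g>0\}\right)>s$, a contradiction. For (4), I would apply Theorem~\ref{Theorem: Krengel Amenable} to the non-singular system $G\overset{\f a}{\curvearrowright}\left(\mathcal{N},\mathbb{P}_{\mathcal{N}}\right)$, where $\mathbb{P}_{\mathcal{N}}$ is $\mathbb{P}\mid_{\mathcal{N}}$ normalized to a probability (the case $\mathbb{P}\left(\mathcal{N}\right)=0$ being vacuous); by (3) it has no a.c.i.p., so the dual averages of every $L^1\left(\mathbb{P}_{\mathcal{N}}\right)$-function tend to $0$ in measure on $\mathcal{N}$. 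Given $f\in L^1\left(\mu\right)$, applying this to $\left(f/w\right)\mid_{\mathcal{N}}$ and using $A_N^{\mathbb{P}}\left(f/w\right)=\left(A_N^{\mu}f\right)/w$ together with the measure-class invariance of convergence in measure (multiplying by the fixed positive $w$ preserves it) yields $A_N^{\mu}f\to 0$ in measure on $\mathcal{N}$.

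I expect the main obstacle to be the exhaustion step underlying (3): one must ensure that the maximal-support invariant density genuinely leaves behind a null part carrying no invariant mass, and that an a.c.i.p.\ for the restricted system lifts to an honest element of $\mathcal{I}$ supported in $\mathcal{N}$. The remaining work is bookkeeping, namely the dual-action conjugacy and the transfer of Theorem~\ref{Theorem: Krengel Amenable} from the probability measure $\mathbb{P}_{\mathcal{N}}$ back to the $\sigma$-finite $\mu$ on the invariant set $\mathcal{N}$.
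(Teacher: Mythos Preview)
Your argument is correct. The overall architecture matches the paper's: both reduce to an equivalent probability measure via the conjugacy $\widehat{\f a}_g^{\mathbb P}(f/w)=(\widehat{\f a}_g^{\mu}f)/w$ (this is exactly the paper's operator $\mathfrak D$ in the Proposition following the theorem), and both obtain property~(4) by invoking Theorem~\ref{Theorem: Krengel Amenable} on the null part with the probability measure and then transferring back.

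The genuine difference is in how you produce $\mathcal P$ and $\mathcal N$ in the probability case. The paper does not spell this out: it cites \cite[Theorem~1.4.6]{aaronson} and notes that the Hopf conservative--dissipative decomposition used there carries over to countable groups. You instead run a direct exhaustion in the cone $\mathcal I$ of non-negative $\widehat{\f a}^{\mathbb P}$-invariant $L^1$-densities, taking a countable sum to realize the supremum of $\mathbb P(\{h>0\})$ and arguing maximality for~(3). This is more self-contained---it avoids the Hopf decomposition entirely and needs only that $\mathcal I$ is closed under $L^1$-limits (hence your series $\sum 2^{-n}h_n/\no{h_n}$ stays in $\mathcal I$). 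The paper's route, by outsourcing to Aaronson, is shorter to state but imports more machinery; your route makes the construction of $F$ explicit from the start, which is arguably cleaner given that property~(1) demands exactly such a function. Either way the substance of property~(4), which is the part genuinely tied to amenability and F{\o}lner sequences, comes from Theorem~\ref{Theorem: Krengel Amenable} in both approaches.
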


When $G\overset{\f a}{\curvearrowright}\left(\Omega,\mu\right)$ is a non-singular $G$-action and $\mu$ is a probability measure so that Theorem \ref{Theorem: Positive-Null Decomposition} is applicable, there is a standard way to obtain positive-null decomposition from Theorem \ref{Theorem: Positive-Null Decomposition} using the very same argument as in \cite[Theorem~1.4.6.]{aaronson}. Note that Hopf's conservative-dissipative decomposition that is being used in this proof does hold for general countable groups. See \cite[Section~1.6]{aaronson}. In order to establish Theorem \ref{Theorem: Positive-Null Decomposition} also for σ-finite spaces we use the following proposition, and this will complete the proof of Theorem \ref{Theorem: Positive-Null Decomposition}.

\begin{proposition}
If the positive-null decomposition as in Theorem \ref{Theorem: Positive-Null Decomposition} holds for probability spaces, then it also holds for σ-finite measure spaces.
\end{proposition}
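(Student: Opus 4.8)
The plan is to reduce to the probability-space hypothesis by replacing $\mu$ with a mutually absolutely continuous probability measure and transporting the resulting decomposition back. Since $\mu$ is σ-finite, I would first fix a strictly positive density $h\in L^{1}\left(\mu\right)$ with $\int_{\Omega}h\,d\mu=1$, obtained by partitioning $\Omega$ into sets of finite measure and weighting them summably, and set $\mu_{0}:=h\,d\mu$. Because non-singularity depends only on the measure class and $\mu_{0}\sim\mu$, the system $G\overset{\f a}{\curvearrowright}\left(\Omega,\mu_{0}\right)$ is a non-singular $G$-system on a probability space. By the assumed probability-space case it admits a positive-null decomposition $\Omega=\mathcal{P}_{0}\cup\mathcal{N}_{0}$ with a non-negative $\widehat{\f a}^{\mu_{0}}$-invariant $F_{0}\in L^{1}\left(\mu_{0}\right)$ satisfying properties (1)--(4) relative to $\mu_{0}$, where $\widehat{\f a}^{\mu_{0}}$ denotes the dual action computed with respect to $\mu_{0}$.

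I would then transport the decomposition back to $\mu$ by keeping the same sets $\mathcal{P}:=\mathcal{P}_{0}$ and $\mathcal{N}:=\mathcal{N}_{0}$, which remain disjoint, $\f a$-invariant and exhausting since the null ideals of $\mu$ and $\mu_{0}$ coincide, and by taking the rescaled density $F:=F_{0}\,h$. Two routine computations carry most of the weight. First, $\int_{\Omega}F\,d\mu=\int_{\Omega}F_{0}\,d\mu_{0}$, so $F\in L^{1}\left(\mu\right)$ with unit norm. Second, testing against $b\in L^{\infty}$ in the duality characterization $\int\widehat{\f a}_{g}\left(f\right)b\,d\mu=\int fb\circ\f a_{g}\,d\mu$ yields the intertwining relation $\widehat{\f a}_{g}\left(f_{0}h\right)=\left(\widehat{\f a}^{\mu_{0}}_{g}f_{0}\right)h$ for $f_{0}\in L^{1}\left(\mu_{0}\right)$; thus multiplication by $h$ is an isometric isomorphism $L^{1}\left(\mu_{0}\right)\to L^{1}\left(\mu\right)$ intertwining the two dual actions. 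In particular $\widehat{\f a}^{\mu_{0}}$-invariance of $F_{0}$ passes to $\widehat{\f a}$-invariance of $F$, and since $h>0$ one has $\left\{F>0\right\}=\left\{F_{0}>0\right\}=\mathcal{P}$ and $\left\{F=0\right\}=\mathcal{N}$, giving property (1). Properties (2) and (3) follow at once: the invariant measure $F\,d\mu=F_{0}\,d\mu_{0}$ is literally the a.c.i.p. produced on $\mathcal{P}_{0}$ and is mutually absolutely continuous with $\mu\!\mid_{\mathcal{P}}$ because $\mu\!\mid_{\mathcal{P}}\sim\mu_{0}\!\mid_{\mathcal{P}}$, while null-ness of $\left(\mathcal{N},\mu\!\mid_{\mathcal{N}}\right)$ is the null-ness of $\left(\mathcal{N}_{0},\mu_{0}\!\mid_{\mathcal{N}_{0}}\right)$, since the existence of an a.c.i.p. is detected by the measure class alone.

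The one place that needs genuine care, and the step I expect to be the main obstacle, is property (4), since the dual actions with respect to $\mu$ and $\mu_{0}$ genuinely differ. Using the intertwining relation I would rewrite, for $f\in L^{1}\left(\mu\right)$ and $f_{0}:=f/h\in L^{1}\left(\mu_{0}\right)$,
$$\frac{1}{\left|F_{N}\right|}\sum_{g\in F_{N}}\widehat{\f a}_{g}f=\left(\frac{1}{\left|F_{N}\right|}\sum_{g\in F_{N}}\widehat{\f a}^{\mu_{0}}_{g}f_{0}\right)h.$$
Property (4) for $\mu_{0}$ gives that the averages in parentheses tend to $0$ in measure on $\mathcal{N}$ with respect to $\mu_{0}$, hence also with respect to $\mu$, since convergence in measure is preserved under the mutually absolutely continuous change of measure. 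It then remains to observe that multiplication by the fixed, almost-everywhere finite positive function $h$ preserves convergence in measure: on any set of finite $\mu$-measure one splits according to $\left\{h\le M\right\}$ and $\left\{h>M\right\}$, uses almost-everywhere finiteness of $h$ to make the latter of small measure for large $M$, and the hypothesized convergence on the former. This delivers the desired convergence to $0$ in measure on $\mathcal{N}$ for the dual action $\widehat{\f a}$ and completes the proof.
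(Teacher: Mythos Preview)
Your proof is correct and follows essentially the same approach as the paper: pass to an equivalent probability measure, use the isometric isomorphism $f_{0}\mapsto f_{0}h$ from $L^{1}\left(\mu_{0}\right)$ to $L^{1}\left(\mu\right)$ (the paper calls it $\mathfrak{D}$, with $h=d\mu_{0}/d\mu$) together with the intertwining relation $\widehat{\f a}_{g}\circ\mathfrak{D}=\mathfrak{D}\circ\widehat{\f a}^{\mu_{0}}_{g}$, and transport the decomposition and the special function back. Your treatment of property~(4) is in fact more detailed than the paper's, which simply appeals to ``the properties of $\mathfrak{D}$'' without spelling out that multiplication by the a.e.\ finite $h$ preserves convergence in measure.
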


\begin{proof}
Let $G\overset{\f a}{\curvearrowright}\left(\Omega,\mu\right)$ be a non-singular $G$-system. Pick some Borel probability measure $\mu_{1}$ on $\Omega$ that is mutually absolutely continuous with $\mu$. Consider the linear operator $\mathfrak{D}:L^{1}\left(\mu_{1}\right)\to L^{1}\left(\mu\right)$ given by
$$\mathfrak{D}\left(f\right)=\frac{d\mu_{1}}{d\mu}f,\quad f\in L^{1}\left(\mu_{1}\right).$$
This is an isometric isomorphism of Banach spaces whose inverse is given by
$$\mathfrak{D}^{-1}\left(f\right)=\left(\frac{d\mu_{1}}{d\mu}\right)^{-1}f,\quad f\in L^{1}\left(\mu\right).$$
Let $\widehat{\f a}^{\left(1\right)}$ be the dual operator of $\f a$ with respect to $\mu_{1}$. Observe that $\widehat{\f a}$ and $\widehat{\f a}^{\left(1\right)}$ are conjugated by $\mathfrak{D}$, namely
\begin{equation}
\label{eq: DaaD}
\widehat{\f a}\circ\mathfrak{D}=\mathfrak{D}\circ\widehat{\f a}^{\left(1\right)}.
\end{equation}
By the assumption there is positive-null decomposition of $G\overset{\f a}{\curvearrowright}\left(\Omega,\mu_{1}\right)$ given by $\mathcal{P}=\left\{ F_{1}>0\right\}$  and $\mathcal{N}=\left\{ F_{1}=0\right\}$, for some real-valued non-negative $\widehat{\f a}^{\left(1\right)}$-invariant function $F_{1}\in L^{1}\left(\mu_{1}\right)$. 

We then claim that the decomposition $\Omega=\mathcal{P}\cup\mathcal{N}$ serves also as the positive-null decomposition with respect to $\mu$. First, obviously the existence or non-existence of a.c.i.p. on an $\f a$-invariant set is not affected by passing to a mutually absolutely continuous measure. Second, to find the special function in $L^{1}\left(\mu\right)$ representing this decomposition with respect to $\mu$, we simply choose
$$F:=\mathfrak{D}\left(F_{1}\right)\in L^{1}\left(\mu\right).$$
Obviously $\mathcal{P}=\left\{ F>0\right\}$  and $\mathcal{N}=\left\{ F=0\right\}$. Also, $F$ is $\widehat{\f a}$-invariant since, using the identity \eqref{eq: DaaD} and the $\widehat{\f a}^{\left(1\right)}$-invariance of $F_{1}$, for every $g\in G$,
$$\widehat{\f a}_{g}F=\mathfrak{D}\left(\widehat{\f a}_{g}^{\left(1\right)}F_{1}\right)=\mathfrak{D}\left(F_{1}\right)=F.$$
Finally, given a Følner sequence $\left(F_{N}\right)_{N=1}^{\infty}$ for $G$, for every $f\in L^{1}\left(\mu\right)$ we can use the identity \eqref{eq: DaaD} to write, for every $N$,
$${\textstyle \frac{1}{\left|F_{N}\right|} \sum_{g\in F_{N}}}\widehat{\f a}_{g}f=\mathfrak{D}\left({\textstyle \frac{1}{\left|F_{N}\right|}\sum_{g\in F_{N}}}\widehat{\f a}_{g}\mathfrak{D}^{-1}\left(f\right)\right).$$
By the properties of $\mathfrak{D}$, since the last part of Theorem \ref{Theorem: Positive-Null Decomposition} holds for $\mu_{1}$ it holds also for $\mu$.
\end{proof}

From the positive-null decomposition for $G$-systems we obtain a positive-null decomposition for $S\alpha S$ $G$-processes using the spectral representation in the standard way described by Samorodnitsky \cite{samorodnitsky2005}. Suppose that $\mathbf{X}$ is a stationary non-Gaussian $S\alpha S$ $G$-process with minimal spectral representation
$$\mathbf{X}\ed\left(\int_{\Omega}\f u_{g}f_{0}dM\right)_{g\in G}.$$
Applying the positive-null decomposition to the underlying $G$-system $G\overset{\f a}{\curvearrowright}\left(\Omega,\mu\right)$ we obtain corresponding positive and null parts $\mathcal{P}$ and $\mathcal{N}$. We then define the positive-null decomposition of $\mathbf{X}$ into $\mathbf{X}\ed\mathbf{X}^{\mathcal{P}}+\mathbf{X}^{\mathcal{N}}$ by letting
$$\mathbf{X}^{\mathcal{P}}=\left(\int_{\mathcal{P}}\f u_g f_{0}dM\right)_{g\in G}\,\,\,\text{ and  }\,\,\,\mathbf{X}^{\mathcal{N}}=\left(\int_{\mathcal{N}}\f u_g f_{0}dM\right)_{g\in G}.$$
Recalling the structure of Rosiński minimal spectral representation as in Theorem \ref{Theorem: Rosiński} and using the $\f a$-invariance of $\mathcal{P}$ and $\mathcal{N}$, we see that $\mathbf{X}^{\mathcal{P}}$ and $\mathbf{X}^{\mathcal{N}}$ are independent stationary non-Gaussian $S\alpha S$ $G$-processes. Thus, $\mathbf{X}$ has null (positive, resp.) spectral representation exactly when $\mathbf{X}\ed\mathbf{X}^{\mathcal{N}}$ ($\mathbf{X}\ed\mathbf{X}^{\mathcal{P}}$, resp.).

\section{Ergodic properties of $S\alpha S$ processes via the spectral representation}

\subsection{Characterization of weak-mixing}

In his work \cite{gross}, Gross gave an important condition for strong-mixing in stationary non-Gaussian $S\alpha S$ processes in terms of the spectral representation of the process. Here we introduce the translation of this condition for weak-mixing, using the notion of density for subsets of the group.

Let $G$ be a countable amenable group and fix a Følner sequence $\mathcal{F}=\left(F_{N}\right)_{N=1}^{\infty}$ for $G$. For a subset $D\subset G$ define the {\it $\mathcal{F}$-density} of $D$ to be
$$\mathrm{d}_{\mathcal{F}}\left(D\right)=\lim_{N\to\infty}\frac{\left|D\cap F_{N}\right|}{\left|F_{N}\right|}=\lim_{N\to\infty}\frac{1}{\left|F_{N}\right|}\sum_{g\in F_{N}}1_{D}\left(g\right)$$
when the limit exists. When $\mathrm{d}_{\mathcal{F}}\left(D\right)=1$ we say that $D$ has {\it full $\mathcal{F}$-density}. The Følner property of $\mathcal{F}$ makes the assignment $D\mapsto\mathrm{d}_{\mathcal{F}}\left(D\right)$ translation-invariant when it is well-defined, but in the following discussion we will not use this property. 

Recall the notation for limits as in \eqref{eq: countable limits}. By the classical Koopman--von Neumann Lemma (see \cite[Lemma~6.2 in \textsection 2.6]{petersen}), a real-valued non-negative function $\psi$ on $G$ satisfies
$$\lim_{N\to\infty}\frac{1}{\left|F_{N}\right|}\sum_{g\in F_{N}}\psi\left(g\right)=0$$
if and only if there exists a full $\mathcal{F}$-density set $D\subset G$ such that
$$\lim_{g\in D}\psi\left(g\right)=0.$$

The following theorem is the weak-mixing version of Gross' condition for strong-mixing as in \cite[Proposition~4.2]{wangroystoev}.

\begin{theorem}[Wang, Roy \& Stoev, following Gross]
Let $G$ be a countable amenable group and $\mathbf{X}$ be a stationary non-Gaussian $S\alpha S$ $G$-process with Rosiński minimal spectral representation as in Theorem \ref{Theorem: Rosiński}. Then $\mathbf{X}$ is weakly-mixing if and only if for every Følner sequence $\mathcal{F}$ for $G$ there exists a set $D\subset G$ of full $\mathcal{F}$-density such that, for every compact set $K\subset\left(0,\infty\right)$ and every $\epsilon>0$,
$$\lim_{g\in D}\mu\left(\left|f_{0}\right|^{\alpha}\in K,\left|f_{g}\right|^{\alpha}>\epsilon\right)=0.$$
\end{theorem}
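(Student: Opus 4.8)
The plan is to reduce weak-mixing to a Cesàro-averaged overlap condition on the single generator $f_0$, and then to pass to the pointwise full-density statement by means of the Koopman--von Neumann Lemma. Write $f_g:=\f{u}_gf_0$ and identify $\phi=\sum_{h\in H}c_hX_h\in\mathrm{Lin}(\mathbf X)$ (with $H\Subset G$) with its spectral function $f=\sum_{h\in H}c_hf_h\in L^\alpha(\mu)$, so that $\lambda_g\phi$ corresponds to $\f{u}_gf=\sum_{h\in H}c_hf_{gh}$. The defining identity \eqref{eq: alpha norm} then gives
\[
\mathbb E\bigl(\exp(i(\lambda_g\phi-\phi))\bigr)-\bigl|\mathbb E(\exp(i\phi))\bigr|^2=\exp\Bigl(-\int_\Omega|\f{u}_gf-f|^\alpha\,d\mu\Bigr)-\exp\Bigl(-2\int_\Omega|f|^\alpha\,d\mu\Bigr).
\]
By Definition \ref{Definition: Weak-mixing}(3) and the Stone--Weierstrass reduction used in the proof of Theorem \ref{Theorem: Ergodicity iff Weak-Mixing}, $\mathbf X$ is weakly-mixing precisely when, for every $\phi\in\mathrm{Lin}(\mathbf X)$, the Cesàro averages over $\mathcal F$ of the modulus of the right-hand side tend to $0$.

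The analytic heart is purely measure-theoretic and coincides with Gross' strong-mixing computation (cf. \cite{gross} and \cite[Proposition~4.2]{wangroystoev}). Since each $\f{u}_h$ is an isometry of $L^\alpha(\mu)$ one has $\int|f_g|^\alpha d\mu=\int|f_0|^\alpha d\mu$, and for the generator the integrand
\[
|f_0|^\alpha+|f_g|^\alpha-|f_g-f_0|^\alpha
\]
vanishes off the overlap $\{f_0\neq0\}\cap\{f_g\neq0\}$; truncating $f_0$ into the regions where $|f_0|^\alpha$ is large, lies in a compact $K\subset(0,\infty)$, or is small, shows that the exponential difference of the first paragraph vanishes exactly as the overlap masses $\mu(|f_0|^\alpha\in K,|f_g|^\alpha>\epsilon)$ do. For the \emph{forward} implication, testing weak-mixing on $\phi=c^{1/\alpha}X_e$ for all $c>0$ (so that $f=c^{1/\alpha}f_0$) therefore forces
\[
\lim_{N\to\infty}\frac{1}{|F_N|}\sum_{g\in F_N}\mu\bigl(|f_0|^\alpha\in K,|f_g|^\alpha>\epsilon\bigr)=0
\]
for every compact $K\subset(0,\infty)$ and $\epsilon>0$. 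To extract a \emph{single} full-density set I would fix $K_m=[1/m,m]$ and $\epsilon_n=1/n$ and set $\psi(g)=\sum_{m,n}2^{-(m+n)}\mu(|f_0|^\alpha\in K_m,|f_g|^\alpha>\epsilon_n)$, which is bounded because each slice satisfies $\mu(|f_0|^\alpha\geq1/m)\leq m\int|f_0|^\alpha d\mu<\infty$ and has vanishing Cesàro average. The Koopman--von Neumann Lemma yields a full-$\mathcal F$-density set $D$ with $\lim_{g\in D}\psi(g)=0$; as every compact $K$ lies in some $K_m$ and every $\epsilon$ exceeds some $\epsilon_n$, this one $D$ witnesses the claimed convergence for all $K,\epsilon$. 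Since weak-mixing does not depend on the Følner sequence, such a $D$ can be produced for every $\mathcal F$.

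For the \emph{converse} I would reverse the estimate. Given such a $D$, the overlap $\{f\neq0\}\cap\{\f{u}_gf\neq0\}$ for $f=\sum_{h\in H}c_hf_h$ is contained in $\bigcup_{h,h'\in H}\{f_{h'}\neq0\}\cap\{f_{gh}\neq0\}$; applying the measure-preserving change of variables induced by $\f{a}_{h'}$, the overlap mass of $f_{gh}$ with $f_{h'}$ is governed by $\mu(|f_0|^\alpha\in K,|f_k|^\alpha>\epsilon)$ for a group element $k$ obtained from $g,h,h'$ by the cocycle bookkeeping. Because the $\mathcal F$-density is translation-invariant, each two-sided translate of $D$ has full density, hence so does the finite intersection $D_H$ of the relevant translates; along $g\in D_H$ every pairwise overlap tends to $0$, so the exponential difference for $\phi$ does, and, being bounded and vanishing along the full-density set $D_H$, it also vanishes in Cesàro average. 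This is the weak-mixing criterion of the first paragraph.

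The step I expect to be the main obstacle is the uniformity in the analytic core: bounding $\exp(-\int|f_g-f_0|^\alpha d\mu)-\exp(-2\int|f_0|^\alpha d\mu)$ both above and below by the overlap masses $\mu(|f_0|^\alpha\in K,|f_g|^\alpha>\epsilon)$ demands estimating the contributions of the large- and small-value regions of $f_0$ uniformly in $g$ and simultaneously across the truncation thresholds, which is the delicate non-linear estimate (the map $t\mapsto t^\alpha$ is not additive) underlying Gross' theorem. The secondary point requiring care is the reduction from an arbitrary $\phi$ to the generator $f_0$, where one must verify that the single full-density set survives the finitely many two-sided translations coming from the support $H$ of $\phi$.
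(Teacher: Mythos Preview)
The paper does not prove this statement itself; it is quoted from \cite[Proposition~4.2]{wangroystoev}, and the paper's own contribution is the refinement in Theorem~\ref{Theorem: Weak-mixing Characterization} (condition~(4)) via the Simultaneous Koopman--von Neumann Lemma~\ref{Lemma: Simultaneous Kopmman-von Neumann}. Your outline is faithful to the structure of that literature: the Cambanis--Hardin--Weron characteristic-functional criterion, Gross' truncation estimates converting the exponential difference to overlap masses, and a Koopman--von Neumann passage from Ces\`aro convergence to convergence along a full-density set. Your identification of the ``main obstacle''---the uniform two-sided comparison between the exponential difference and the overlap masses---is correct; that is exactly Gross' analytic core.

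Your weighted-sum device in the forward direction, setting $\psi(g)=\sum_{m,n}2^{-(m+n)}\mu(|f_0|^\alpha\in K_m,|f_g|^\alpha>\epsilon_n)$ and applying the ordinary Koopman--von Neumann Lemma once, is a clean alternative to the paper's Simultaneous Koopman--von Neumann Lemma and accomplishes the same goal of producing a single $D$ valid for all $(K,\epsilon)$.

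There is, however, a genuine gap in your converse sketch. You invoke ``the measure-preserving change of variables induced by $\f a_{h'}$'' to reduce the overlap of $f_{h'}$ with $f_{gh}$ to that of $f_0$ with some $f_k$, but $\f a_{h'}$ is in general only non-singular: the Radon--Nikodym factor $\f a_{h'}'$ appears both in $d\mu$ under the substitution and \emph{inside} the indicator, since $\{|f_{h'}|^\alpha>\delta\}=\{\f a_{h'}'\cdot|f_0\circ\f a_{h'}|^\alpha>\delta\}$; the set is therefore not a translate of $\{|f_0|^\alpha>\delta\}$, and no group element $k$ falls out of mere cocycle bookkeeping. The reduction from arbitrary $f_1,f_2\in\mathrm{Lin}(f_0)$ to the generator pair $(f_0,\f u_g f_0)$ is precisely the ``Moreover'' clause of Theorem~\ref{Theorem: Weak-mixing Characterization}; the paper attributes it to Gross \cite[Theorem~2.1]{gross} following Maruyama, with the justification in \cite[Appendix~A]{wangroystoev}, and that argument proceeds through the structure of the $S\alpha S$ characteristic functional rather than a pointwise change of variables. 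A smaller related issue: your finite-intersection step for the translates of $D$ tacitly uses two-sided translation invariance of $\mathcal F$-density, which needs a two-sided F{\o}lner sequence, whereas the hypothesis is only a (left) F{\o}lner sequence.
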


This formulation was used in the proof of \cite[Theorem~4.1]{wangroystoev} as well as in the proof of \cite[Theorem~3.1]{samorodnitsky2005}. In the following we present a slight strengthening of the Koopman--von Neumann Lemma in order to show that, in the above theorem, it is sufficient to find a full $\mathcal{F}$-density set $D=D\left(K,\epsilon\right)$ that may depend on the choice of $K$ and $\epsilon$.

\begin{lemma}[A Simultaneous Koopman--von Neumann Lemma]
\label{Lemma: Simultaneous Kopmman-von Neumann}
Let $G$ be a countable amenable group with a Følner sequence $\mathcal{F}=\left(F_{N}\right)_{N=1}^{\infty}$. For a countable collection of real-valued non-negative functions $\Psi\subset l^{\infty}\left(G\right)$ the following are equivalent.
\begin{enumerate}
\item $\lim_{N\to\infty}\frac{1}{\left|F_{N}\right|}\sum_{g\in F_{N}}\psi\left(g\right)=0$ for all $\psi\in\Psi$.
\item There is $D\subset G$ of full $\mathcal{F}$-density such that $\lim_{g\in D}\psi\left(g\right)=0$ for all $\psi\in\Psi$.
\end{enumerate}
\end{lemma}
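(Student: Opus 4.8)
The plan is to reduce the whole countable family to a single dominating function and then invoke the classical (single-function) Koopman--von Neumann Lemma recalled just above. The implication (2)$\Rightarrow$(1) is the routine one: suppose $D$ has full $\mathcal{F}$-density and $\lim_{g\in D}\psi(g)=0$ for each $\psi\in\Psi$. Fixing $\psi$ and $\epsilon>0$, the set $\{g\in D:\psi(g)>\epsilon\}$ is finite, of some cardinality $K_\epsilon$, so for every $N$ I would split
$$\frac{1}{|F_N|}\sum_{g\in F_N}\psi(g)\le\frac{1}{|F_N|}\sum_{g\in F_N\cap D}\psi(g)+\|\psi\|_\infty\frac{|F_N\setminus D|}{|F_N|}\le\epsilon+\frac{\|\psi\|_\infty K_\epsilon}{|F_N|}+\|\psi\|_\infty\frac{|F_N\setminus D|}{|F_N|}.$$
Since the Følner sets are increasing and exhaust the infinite group, $|F_N|\to\infty$, and $\mathrm{d}_{\mathcal{F}}(D)=1$ forces $|F_N\setminus D|/|F_N|\to0$; hence $\limsup_N$ of the left side is at most $\epsilon$, and letting $\epsilon\downarrow0$ gives (1).

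For the substantive implication (1)$\Rightarrow$(2), I would enumerate $\Psi=\{\psi_m\}_{m\ge1}$, set weights $w_m=2^{-m}/(1+\|\psi_m\|_\infty)$, and define the single non-negative function
$$\Phi=\sum_{m\ge1}w_m\psi_m.$$
Each summand obeys $0\le w_m\psi_m\le2^{-m}$, so $\Phi\in\ell^\infty(G)$ is well-defined and non-negative. Because $\frac{1}{|F_N|}\sum_{g\in F_N}w_m\psi_m(g)\le2^{-m}$ for every $N$ and, by hypothesis (1), each such term tends to $0$ as $N\to\infty$, dominated convergence for the sum over $m$ yields $\frac{1}{|F_N|}\sum_{g\in F_N}\Phi(g)\to0$. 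I would then apply the classical Koopman--von Neumann Lemma to $\Phi$ to obtain a set $D\subset G$ of full $\mathcal{F}$-density with $\lim_{g\in D}\Phi(g)=0$.

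This single $D$ serves the whole family simultaneously. Indeed, non-negativity gives $\Phi(g)\ge w_m\psi_m(g)$ for every $m$ and $g$, so for each $m$ and $\epsilon>0$ one has the inclusion $\{g\in D:\psi_m(g)>\epsilon\}\subset\{g\in D:\Phi(g)>w_m\epsilon\}$, and the right-hand set is finite since $\lim_{g\in D}\Phi(g)=0$; hence $\lim_{g\in D}\psi_m(g)=0$ for every $m$, which is exactly (2). The only places I expect to spend the argument are the interchange of the limit in $N$ with the sum in $m$ (justified by the uniform summable bound $2^{-m}$) and the standing facts about the Følner sequence ensuring $|F_N|\to\infty$; I do not anticipate a genuine obstacle once the reduction to the dominating function $\Phi$ is in place, the whole point being that pointwise domination $\psi_m\le w_m^{-1}\Phi$ transfers the single-function conclusion to the entire family.
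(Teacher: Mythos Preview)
Your proof is correct and takes a genuinely different route from the paper's. For $(1)\Rightarrow(2)$, the paper proceeds by a direct diagonal construction: it lets $E_m^{(k)}=\{g:\psi^{(k)}(g)>1/m\}$, shows each has $\mathcal{F}$-density zero, forms the finite unions $E_m=E_m^{(1)}\cup\dotsb\cup E_m^{(m)}$, chooses thresholds $N_m$ so that the averages of $1_{E_m}$ over $F_N$ are small for $N\ge N_{m-1}$, and then splices together $E=\bigcup_{m\ge2}\bigl(E_m\cap(F_{N_m}\setminus F_{N_{m-1}})\bigr)$, taking $D=G\setminus E$. In other words, the paper essentially re-runs the single-function Koopman--von Neumann argument while interleaving the functions by index, in the spirit of Petersen's proof. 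Your reduction via the dominating function $\Phi=\sum_m w_m\psi_m$ is shorter and more conceptual: it black-boxes the classical lemma and uses only the pointwise inequality $w_m\psi_m\le\Phi$ to transfer the conclusion back to each $\psi_m$. The paper's approach, by contrast, avoids the (easy) interchange of limit and sum and makes the structure of $D$ explicit. Either way the content is the same; your argument simply packages the diagonalization inside the single-function lemma rather than redoing it.
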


\begin{proof}
The implication $\left(2\right)\implies\left(1\right)$ is an easy consequence of the Cesàro summation.  We show that $\left(1\right)\implies\left(2\right)$. Pick an ordering $\Psi=\left(\psi^{\left(k\right)}\right)_{k=1}^{\infty}$ and, for every $k$, consider the sets
$$E_{m}^{\left(k\right)}=\left\{ g\in G:\psi^{\left(k\right)}\left(g\right)>1/m\right\} ,\quad m=1,2,\dotsc.$$
By the assumption we see that $\ensuremath{\mathrm{d}_{\mathcal{F}}\left(E_{m}^{\left(k\right)}\right)=0}$ for every $k$ and $m$. Consider the sets
$$E_{m}=E_{m}^{\left(1\right)}\cup E_{m}^{\left(2\right)}\cup\dotsm\cup E_{m}^{\left(m\right)},\quad m=1,2,\dotsc.$$
Since each $E_{m}$ is a finite union of sets of $\mathcal{F}$-density zero, $\mathrm{d}_{\mathcal{F}}\left(E_{m}\right)=0$ for all $m$. Hence we can fix an increasing sequence $\left(N_{m}\right)_{m=2}^{\infty}$ of the positive integers such that, for every $m$,
$${\textstyle N\geq N_{m-1}\implies\frac{1}{\left|F_{N}\right|}\sum_{g\in F_{N}}1_{E_{m}}\left(g\right)<\frac{1}{m}}.$$
Then put
$$E={\textstyle \bigcup_{m=2}^{\infty}}\left(E_{m}\cap\left(F_{N_{m}}\backslash F_{N_{m-1}}\right)\right)\text{ and }D=G\backslash E.$$
Now we can proceed in a complete analogy with the proof of \cite[Lemma~6.2 in \textsection 2.6]{petersen} to prove that $\mathrm{d}_{\mathcal{F}}\left(D\right)=1$ and that $\lim_{g\in D}\psi^{\left(k\right)}\left(g\right)=0$ for each $k$.
\end{proof}

In the following theorem we introduce some conditions that are equivalent to weak-mixing. Condition $\left(2\right)$ is due to Cambanis, Hardin \& Weron \cite[Theorem~2]{CaHaWe}; Condition $\left(3\right)$ is the weak-mixing version of Gross' \cite[Theorem~2.7]{gross} as used by Samorodnitsky \cite{samorodnitsky2005} and Wang, Roy \& Stoev \cite[Theorem~4.2]{wangroystoev}; Condition $\left(4\right)$ is new and is a result of the above simultaneous Koopman--von Neumann Lemma. It is worthwhile to mention that the minimality of the Rosiński spectral representation does not play any role here (cf. \cite[\textsection 4]{gross}).

\begin{theorem}
\label{Theorem: Weak-mixing Characterization}
Let $G$ be a countable amenable group and $\mathbf{X}$ a stationary non-Gaussian $S\alpha S$ $G$-process with Rosiński (possibly non-minimal) spectral representation as in Theorem \ref{Theorem: Rosiński}. Denote by $\mathrm{Lin}\left(f_{0}\right)\subset L^{\alpha}\left(\mu\right)$ the linear subspace spanned by $\left\{ \f u_{g}f_{0}:g\in G\right\}$. Fix a Følner sequence $\mathcal{F}=\left(F_N\right)_{N=1}^{\infty}$ for $G$. The following are equivalent.
\begin{enumerate}
\item $\mathbf{X}$ is weakly-mixing;
\item for every $f_{1},f_{2}\in\mathrm{Lin}\left(f_{0}\right)$,
$$\lim_{N\to\infty}\frac{1}{\left|F_{N}\right|}\sum_{g\in F_{N}}\left|\exp\left(-\no{f_{1}+\f u_{g}f_{2}}_{\alpha}^{\alpha}\right)-\exp\left(-\no{f_{1}}_{\alpha}^{\alpha}\right)\exp\left(-\no{f_{2}}_{\alpha}^{\alpha}\right)\right|=0;$$
\item for every $f_{1},f_{2}\in\mathrm{Lin}\left(f_{0}\right)$ there exists a set $D\subset G$ of full $\mathcal{F}$-density such that, for every compact set $K\subset\left(0,\infty\right)$ and every $\epsilon>0$,
$$\lim_{g\in D}\mu\left(\left|f_{1}\right|\in K,\left|\f u_{g}f_{2}\right|>\epsilon\right)=0;$$
\item for every $f_{1},f_{2}\in\mathrm{Lin}\left(f_{0}\right)$, every compact set $K\subset\left(0,\infty\right)$ and every $\epsilon>0$, there exists a set $D\left(K,\epsilon\right)\subset G$ of full $\mathcal{F}$-density such that
$$\lim_{g\in D\left(K,\epsilon\right)}\mu\left(\left|f_{1}\right|\in K,\left|\f u_{g}f_{2}\right|>\epsilon\right)=0.$$
\end{enumerate}
Moreover, each of Conditions $\left(3\right)$ and $\left(4\right)$ is enough to be verified for $f_{1}=f_{0}$ and $f_{2}=\f u_{g}f_{0}$ for each arbitrary $g\in G$.
\end{theorem}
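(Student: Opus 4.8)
The plan is to establish the equivalences by recasting weak-mixing as a statement about characteristic functions on $\mathrm{Lin}(f_0)$, converting that into the measure-overlap condition through Gross' integrand estimate, and finally passing between the two density formulations in Conditions $(3)$ and $(4)$ by means of the Simultaneous Koopman--von Neumann Lemma. Throughout I would identify each $f_i\in\mathrm{Lin}(f_0)$ with some $\phi_i\in\mathrm{Lin}(\mathbf{X})$ via the spectral representation, so that $\mathbb{E}(\exp(i\phi_i))=\exp(-\no{f_i}_\alpha^\alpha)$ and $\f u_g f_2$ corresponds to $\lambda_g\phi_2$.

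For $(1)\Leftrightarrow(2)$ I would observe that, under these identifications,
$$\exp\left(-\no{f_1+\f u_g f_2}_\alpha^\alpha\right)=\mathbb{E}\left(\exp(i\phi_1)\,\overline{\lambda_g\exp(-i\phi_2)}\right),\qquad \exp\left(-\no{f_1}_\alpha^\alpha\right)\exp\left(-\no{f_2}_\alpha^\alpha\right)=\mathbb{E}\left(\exp(i\phi_1)\right)\overline{\mathbb{E}\left(\exp(-i\phi_2)\right)},$$
so Condition $(2)$ is exactly the weak-mixing formulation of Definition \ref{Definition: Weak-mixing}$(2)$ tested on the exponentials $\exp(i\phi)$, $\phi\in\mathrm{Lin}(\mathbf{X})$. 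Since the linear span of these exponentials is dense in $L^2(\mathbf{X})$ by the Stone--Weierstrass Theorem (exactly as in the proof of Theorem \ref{Theorem: Ergodicity iff Weak-Mixing}) and, by Cauchy--Schwarz together with the isometry of $\lambda_g$, the Cesàro averages in Definition \ref{Definition: Weak-mixing}$(2)$ are uniformly (in $N$) Lipschitz in $(\phi_0,\phi_1)\in L^2(\mathbf{X})^2$, it suffices to test on this dense set; this gives $(2)\Leftrightarrow(1)$.

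The analytic heart is $(2)\Leftrightarrow(3)$. Setting $I(g)=\no{f_1}_\alpha^\alpha+\no{f_2}_\alpha^\alpha-\no{f_1+\f u_g f_2}_\alpha^\alpha=\int_\Omega\big(|f_1|^\alpha+|\f u_g f_2|^\alpha-|f_1+\f u_g f_2|^\alpha\big)\,d\mu$, the summand in $(2)$ equals $\exp(-\no{f_1}_\alpha^\alpha-\no{f_2}_\alpha^\alpha)\,|\exp(I(g))-1|$, so $(2)$ is equivalent to the vanishing of the Cesàro averages of the bounded quantity $|I(g)|$. The key input, due to Gross and used by Wang, Roy \& Stoev \cite{gross, wangroystoev}, is a pointwise estimate for $h(a,b)=|a|^\alpha+|b|^\alpha-|a+b|^\alpha$ showing that $h$ is uniformly small outside the overlap region $\{|a|\in K,\ |b|>\epsilon\}$ while controlling its $\mu$-measure inside; consequently, along any subset of $G$, one has $I(g)\to0$ precisely when $\mu(|f_1|\in K,\,|\f u_g f_2|>\epsilon)\to0$ for every compact $K\subset(0,\infty)$ and every $\epsilon>0$. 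Feeding this into the Koopman--von Neumann Lemma \cite[Lemma~6.2 in \textsection 2.6]{petersen} in both directions bridges the Cesàro statement $(2)$ and the full-density statement $(3)$. I expect this estimate to be the main obstacle: for $\alpha\in[1,2)$ the integrand $h$ is not of constant sign, so it must be split according to the relative sizes of $|a|$ and $|b|$, which is the delicate point, and the reason I would lean on \cite{gross, wangroystoev} rather than reprove it.

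It remains to treat $(3)\Leftrightarrow(4)$ and the moreover claim. The implication $(3)\Rightarrow(4)$ is immediate, as the same set $D$ works for all $K,\epsilon$. For $(4)\Rightarrow(3)$ I would fix a countable family $\{(K_m,\epsilon_n)\}$ with the $K_m$ exhausting the compact subsets of $(0,\infty)$ and $\epsilon_n\downarrow0$; by the elementary direction of Koopman--von Neumann each $\psi_{m,n}(g):=\mu(|f_1|\in K_m,\,|\f u_g f_2|>\epsilon_n)$ has vanishing Cesàro average, so Lemma \ref{Lemma: Simultaneous Kopmman-von Neumann} produces a single set $D$ of full $\mathcal{F}$-density along which every $\psi_{m,n}$ tends to $0$, and monotonicity in $(K,\epsilon)$ extends this to all $K,\epsilon$. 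Finally, the moreover claim follows from the equivalences themselves together with the cited theorem of Wang, Roy \& Stoev \cite{wangroystoev}: verifying $(3)$ or $(4)$ for $f_1=f_0$ and $f_2=\f u_h f_0$ already includes the pair $f_1=f_2=f_0$, which by that theorem forces $\mathbf{X}$ to be weakly-mixing, whereupon $(1)\Rightarrow(3),(4)$ returns the condition for all $f_1,f_2\in\mathrm{Lin}(f_0)$ — so, as noted, the minimality of the representation is never used.
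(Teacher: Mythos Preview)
Your proposal is correct and follows essentially the same route as the paper: $(1)\Leftrightarrow(2)$ via Cambanis--Hardin--Weron and Stone--Weierstrass density of the exponentials, the passage between $(2)$ and $(3)$ via Gross' integrand estimates combined with the Koopman--von Neumann Lemma, $(3)\Rightarrow(4)$ trivially, and the return from $(4)$ via the Simultaneous Koopman--von Neumann Lemma applied to a countable exhausting family of pairs $(K,\epsilon)$. The only cosmetic difference is that the paper organizes the last step as $(4)\Rightarrow(1)$ (feeding the countable family directly into Gross' proof of $(3)\Rightarrow(1)$) rather than as $(4)\Rightarrow(3)$, and it attributes the ``moreover'' reduction to Gross--Maruyama as justified in \cite[Appendix~A]{wangroystoev} rather than deducing it from the stated Wang--Roy--Stoev theorem; the content is identical.
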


For the reader convenience we refer to the proof of each of the above implications. The only new argument is in the implication of $\left(4\right)\implies\left(1\right)$.

\begin{itemize}
\item The equivalence of $\left(1\right)$ and $\left(2\right)$ was established by Cambanis, Hardin \& Weron in \cite[Theorem~2]{CaHaWe} in their characterization of strong-mixing for $G=\mathbb{Z}$ and, passing to a set of full $\mathcal{F}$-density and using the Koopman--von Neumann Lemma, the same proof remains valid for the weak-mixing of a process indexed by a countable amenable group $G$.
\item The implication $\left(2\right)\implies\left(3\right)$ was established by Gross in \cite[Proposition~2.5(i)]{gross} together with the observation at the beginning of the proof of \cite[Theorem~2.7]{gross} in his characterization of strong-mixing for $G=\mathbb{Z}$. Passing to a set of full $\mathcal{F}$-density and using the Koopman--von Neumann Lemma, the same proof remains valid for the weak-mixing of a process indexed by a countable amenable group $G$. We remark that it is visible from the proof of \cite[Proposition~2.5(i)]{gross} that the assumption of $\left(2\right)$ provides a set $D$ of full $\mathcal{F}$-density that works simultaneously for all $K$ and $\epsilon$.
\item The implication $\left(3\right)\implies\left(4\right)$ is trivial.
\item The implication $\left(4\right)\implies\left(1\right)$ can be seen as follows. Let $\mathcal{C}$ be the collection of pairs $\left(K,\epsilon\right)$ where $K$ is a compact interval with positive rational endpoints and $\epsilon$ is the reciprocal of an arbitrary positive integer. Then $\left(4\right)$ is true (in particular) for all $\left(K,\epsilon\right)\in\mathcal{C}$ and $\mathcal{C}$ is countable, so that from the above Lemma \ref{Lemma: Simultaneous Kopmman-von Neumann} there is a set $D\subset G$ of full $\mathcal{F}$-density that satisfies $\left(3\right)$ simultaneously for all $\left(K,\epsilon\right)\in\mathcal{C}$. Finally, from Gross' proof of $\left(3\right)\implies\left(1\right)$ as in \cite[Theorem~2.7]{gross} it is clear that it is enough to assume $\left(3\right)$ only for $\left(K,\epsilon\right)\in\mathcal{C}$ in order to conclude $\left(1\right)$, so we conclude that $\left(4\right)\implies\left(1\right)$.
\item Finally, the fact that $\left(3\right)$ is sufficient to be verified for $f_{1}=f_{0}$ and $f_{2}=\f u_{g}f_{0}$ for each arbitrary $g\in G$ in order to deduce weak-mixing, was mentioned by Gross in \cite[Theorem~2.1]{gross} following Maruyama \cite{maruyama}. It was justified in \cite[Appendix~A]{wangroystoev} and the same reasoning is applied in showing the analogous statement for $\left(4\right)$.
\end{itemize}

We can now prove Theorems \ref{Theorem: weak-mixing null} and \ref{Theorem: Main Theorem}. In the following Propositions \ref{Proposition: ErgodicThenNull} and \ref{Proposition: NullThenErgodic}, $\mathbf{X}$ stands for a stationary non-Gaussian $S\alpha S$ $G$-process with Rosiński spectral representation as in Theorem \ref{Theorem: Rosiński}. In Proposition \ref{Proposition: ErgodicThenNull} we establish Theorem \ref{Theorem: weak-mixing null} and one implication in Theorem \ref{Theorem: Main Theorem} without assuming minimality of the Rosiński spectral representation. In Proposition \ref{Proposition: ErgodicThenNull} we establish the converse implication in Theorem \ref{Theorem: Main Theorem} and for that we do assume that the Rosiński spectral representation is minimal.

\begin{proposition}
\label{Proposition: NullThenErgodic}
If $\mathbf{X}$ has a null Rosiński (possibly non-minimal) spectral representation then it is weakly-mixing.
\end{proposition}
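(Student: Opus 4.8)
The plan is to verify weak-mixing by establishing Condition~(4) of Theorem~\ref{Theorem: Weak-mixing Characterization}. By the ``Moreover'' clause of that theorem it suffices to treat $f_{1}=f_{0}$ and $f_{2}=\f u_{h}f_{0}$ for each fixed $h\in G$, working with one fixed Følner sequence $\mathcal{F}=\left(F_{N}\right)_{N=1}^{\infty}$. The advantage of this reduction is that $\f u$ is an action, so $\f u_{g}\f u_{h}f_{0}=\f u_{gh}f_{0}$, and the Rosiński form from Theorem~\ref{Theorem: Rosiński} collapses the $\alpha$-th power into a single Radon--Nikodym cocycle term: since $\left|c_{gh}\right|=1$,
$$\left|\f u_{gh}f_{0}\right|^{\alpha}=\f a_{gh}'\cdot\left|f_{0}\right|^{\alpha}\circ\f a_{gh}=\widehat{\f a}_{gh}\left(\left|f_{0}\right|^{\alpha}\right),$$
and $\left|f_{0}\right|^{\alpha}\in L^{1}\left(\mu\right)$ because $f_{0}\in L^{\alpha}\left(\mu\right)$.

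Writing $p=\left|f_{0}\right|^{\alpha}$ and $E=\left\{\left|f_{0}\right|\in K\right\}$, I would first note that $\mu\left(E\right)<\infty$, since $K$ is bounded away from $0$ and $f_{0}\in L^{\alpha}\left(\mu\right)$. By the Koopman--von Neumann Lemma, producing the full $\mathcal{F}$-density set $D\left(K,\epsilon\right)$ required by Condition~(4) is equivalent to proving that
$$\frac{1}{\left|F_{N}\right|}\sum_{g\in F_{N}}\mu\left(E\cap\left\{\widehat{\f a}_{gh}p>\epsilon^{\alpha}\right\}\right)\xrightarrow[N\to\infty]{}0.$$
Interchanging the finite sum with the integral over $E$ and applying Chebyshev's inequality to the indicators, this average equals $\int_{E}\Psi_{N}\,d\mu$, where
$$\Psi_{N}=\frac{1}{\left|F_{N}\right|}\sum_{g\in F_{N}}1_{\left\{\widehat{\f a}_{gh}p>\epsilon^{\alpha}\right\}},\qquad 0\le\Psi_{N}\le\min\left(1,\ \epsilon^{-\alpha}\tfrac{1}{\left|F_{N}\right|}{\textstyle \sum_{g\in F_{N}}}\widehat{\f a}_{gh}p\right).$$

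Here the null hypothesis enters. The right translate $\left(F_{N}h\right)_{N=1}^{\infty}$ is again a (left) Følner sequence, because $\left|gF_{N}h\triangle F_{N}h\right|=\left|gF_{N}\triangle F_{N}\right|$ and $\left|F_{N}h\right|=\left|F_{N}\right|$. Since the underlying system is null, part~(4) of the positive-null decomposition (Theorem~\ref{Theorem: Positive-Null Decomposition}, with $\mathcal{N}=\Omega$), applied to $p\in L^{1}\left(\mu\right)$ along this translated sequence, gives
$$\frac{1}{\left|F_{N}\right|}\sum_{g\in F_{N}}\widehat{\f a}_{gh}p=\frac{1}{\left|F_{N}h\right|}\sum_{g'\in F_{N}h}\widehat{\f a}_{g'}p\xrightarrow[N\to\infty]{}0\text{ in measure}.$$
Combined with the bound on $\Psi_{N}$, this forces $\Psi_{N}\to0$ in measure. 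As $0\le\Psi_{N}\le1$ and $\mu\left(E\right)<\infty$, uniform boundedness upgrades convergence in measure to $\int_{E}\Psi_{N}\,d\mu\to0$, which is precisely the required Cesàro convergence; feeding it back through the Koopman--von Neumann Lemma yields the set $D\left(K,\epsilon\right)$ and hence Condition~(4).

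The main obstacle is this last upgrade. The null property only yields convergence \emph{in measure} of the Radon--Nikodym averages, never in $L^{1}$, so one cannot integrate them directly. The resolution is to pass first to the uniformly bounded averaged indicators $\Psi_{N}$ and to restrict to the finite-measure window $E=\left\{\left|f_{0}\right|\in K\right\}$ cut out by the compact set $K$; on a set of finite measure, convergence in measure together with uniform boundedness does give $L^{1}$ convergence. The other two reductions---to a single cocycle term via $\f u_{g}\f u_{h}f_{0}=\f u_{gh}f_{0}$, and to the translated Følner sequence $\left(F_{N}h\right)$---are exactly what allow the null hypothesis to be invoked term-by-term, and they make no use of minimality of the representation.
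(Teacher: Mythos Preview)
Your proof is correct and follows essentially the same approach as the paper: both verify Condition~(4) of Theorem~\ref{Theorem: Weak-mixing Characterization} by showing that the Ces\`aro average of $\mu\left(E\cap\{\widehat{\f a}_{g}|f_0|^\alpha>\epsilon\}\right)$ over $F_N$ vanishes, using the null hypothesis to obtain convergence in measure of the Radon--Nikodym averages and then upgrading via uniform boundedness on the finite-measure set $E$. The only cosmetic differences are that the paper invokes Theorem~\ref{Theorem: Krengel Amenable} on the conditioned probability $\mu_E$ together with an explicit $\eta$-threshold split, whereas you invoke part~(4) of Theorem~\ref{Theorem: Positive-Null Decomposition} on $\mu$ directly and phrase the upgrade as bounded convergence in measure; you also make the passage to the translated F\o lner sequence $(F_Nh)$ explicit, which the paper leaves implicit.
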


\begin{proof}
We aim to show Condition $\left(4\right)$ in Theorem \ref{Theorem: Weak-mixing Characterization} in order to establish that $\mathbf{X}$ is weakly-mixing. Let $K\subset\left(0,\infty\right)$ be a compact set and $\epsilon>0$. Put $E=\left\{ \left|f_{0}\right|^{\alpha}\in K\right\}$. Then $\mu\left(E\right)<\infty$ since $\inf K>0$ and $f_{0}\in L^{\alpha}\left(\mu\right)$. If $\mu\left(E\right)=0$ there is nothing to prove so we assume also that $\mu\left(E\right)>0$. Let $\left(F_{N}\right)_{N=1}^{\infty}$ be a Følner sequence for $G$ and, recalling the dual action defined in \ref{eq: dual action}, we put the random variables
$$W_{N}:={\textstyle \frac{1}{\left|F_{N}\right|}\sum_{g\in F_{N}}}\left|f_{g}\right|^{\alpha}={\textstyle \frac{1}{\left|F_{N}\right|}\sum_{g\in F_{N}}}\widehat{\f a}_{g}\left|f_{0}\right|^{\alpha},\quad N=1,2,\dotsc.$$
Let $\eta>0$ be arbitrary and write
\begin{align*}
& {\textstyle \frac{1}{\left|F_{N}\right|}\sum_{g\in F_{N}}}\mu\left(E\cap\left\{ \left|f_{g}\right|^{\alpha}>\epsilon\right\} \right) = \int_{E}{\textstyle \frac{1}{\left|F_{N}\right|}\sum_{g\in F_{N}}}1_{\left\{ \left|f_{g}\right|^{\alpha}>\epsilon\right\} }d\mu \\
& \qquad = \int_{E\cap\left\{ W_{N}\leq\eta\right\} }\dotsm d\mu+\int_{E\cap\left\{ W_{N}>\eta\right\} }\dotsm d\mu:=A_{N}\left(\eta\right)+B_{N}\left(\eta\right).
\end{align*}
For the term $A_{N}\left(\eta\right)$, using that $1_{\left\{ \left|f_{g}\right|^{\alpha}>\epsilon\right\} }\leq\epsilon^{-1}\left|f_{g}\right|^{\alpha}$ we have that
$$A_{N}\left(\eta\right)\leq\epsilon^{-1}\int_{E\cap\left\{ W_{N}\leq\eta\right\} }W_{N}d\mu\leq\epsilon^{-1}\eta\mu\left(E\right).$$
As for the term $B_{N}\left(\eta\right)$, bounding the integrand by $1$ we have
$$B_{N}\left(\eta\right)\leq\mu\left(E\cap\left\{ W_{N}>\eta\right\} \right)=\mu\left(E\right)\mu_{E}\left(W_{N}>\eta\right),$$
where $\mu_{E}\left(\cdot\right)$ denotes the probability measure obtained from $\mu$ by conditioning on the finite measure set $E$. Obviously, $\mu_{E}$ is absolutely continuous with respect to $\mu$. Hence, if there is an a.c.i.p. with respect to $\mu_{E}$ then it is also an a.c.i.p. with respect to $\mu$, contradicting the assumption that the given spectral representation of $\mathbf{X}$ is null. Thus, there is no a.c.i.p. for $\mu_{E}$ and, from Theorem \ref{Theorem: Krengel Amenable} applied to $\left|f_{0}\right|^{\alpha}\in L^{1}\left(\mu_{E}\right)$, we see that
$$\lim_{N\to\infty}B_{N}\left(\eta\right)\leq\mu\left(E\right)\lim_{N\to\infty}\mu_{E}\left(W_{N}>\eta\right)=0.$$
As this holds for an arbitrary positive $\eta$ we obtain that
$$\lim_{N\to\infty}{\textstyle \frac{1}{\left|F_{N}\right|}\sum_{g\in F_{N}}}\mu\left(\left|f_{0}\right|^{\alpha}\in K,\left|f_{g}\right|^{\alpha}>\epsilon\right)=\lim_{N\to\infty}{\textstyle \frac{1}{\left|F_{N}\right|}\sum_{g\in F_{N}}}\mu\left(E\cap\left\{ \left|f_{g}\right|^{\alpha}>\epsilon\right\} \right)=0.$$
Using the Koopman--von Neumann Lemma we see that Condition $\left(4\right)$ in Theorem \ref{Theorem: Weak-mixing Characterization} holds for $f_1=f_0$ and $f_2=f_g=\f u_g f_0$ with an arbitrary $g\in G$. In view of the last part of Theorem \ref{Theorem: Weak-mixing Characterization} we see that $\mathbf{X}$ is weakly-mixing.
\end{proof}

\begin{proposition}
\label{Proposition: ErgodicThenNull}
If $\mathbf{X}\neq 0$ and is ergodic then its Rosiński minimal spectral representation is null.
\end{proposition}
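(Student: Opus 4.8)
The plan is to argue by contraposition: assuming the minimal Rosiński spectral representation is not null, I will show that $\mathbf{X}$ fails to be weakly-mixing, which by Theorem~\ref{Theorem: Ergodicity iff Weak-Mixing} contradicts ergodicity. Applying the positive-null decomposition (Theorem~\ref{Theorem: Positive-Null Decomposition}) to the underlying system $G\overset{\f a}{\curvearrowright}\left(\Omega,\mu\right)$, non-nullity gives an $\f a$-invariant set $\mathcal{P}$ with $\mu\left(\mathcal{P}\right)>0$ carrying an a.c.i.p. $\nu=F\,d\mu$, where $F>0$ on $\mathcal{P}$ and $\widehat{\f a}_g F=F$. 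The first point to settle, and the only place minimality is used, is that $f_0$ does not vanish $\mu$-a.e.\ on $\mathcal{P}$: by minimality $\Omega$ is the common support of $\left\{\f u_g f_0:g\in G\right\}$, that is $\bigcup_{g}\f a_g^{-1}\left(\mathrm{supp}\,f_0\right)$ modulo null sets, so if $f_0=0$ a.e.\ on the invariant set $\mathcal{P}$ then non-singularity of $\f a$ forces $\mathcal{P}$ to be null, a contradiction. Hence $\left\{f_0\neq0\right\}\cap\mathcal{P}$ has positive measure.

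I would then contradict Condition~$\left(3\right)$ of Theorem~\ref{Theorem: Weak-mixing Characterization} for the pair $f_1=f_2=f_0$. By the Simultaneous Koopman--von Neumann Lemma~\ref{Lemma: Simultaneous Kopmman-von Neumann}, weak-mixing forces $\frac{1}{\left|F_N\right|}\sum_{g\in F_N}\mu\left(\left|f_0\right|\in K,\left|\f u_g f_0\right|>\epsilon\right)\to0$ for every compact $K\subset\left(0,\infty\right)$ and every $\epsilon>0$, so it suffices to exhibit one pair $\left(K,\epsilon\right)$ for which this Cesàro average has a strictly positive $\liminf$. The computation is carried out on $\mathcal{P}$ using $\nu$. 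Setting $\phi:=\left|f_0\right|^{\alpha}/F$, the invariance $\widehat{\f a}_g F=F$ gives $\f a_g'=F/\left(F\circ\f a_g\right)$ on $\mathcal{P}$ and hence the clean identity $\left|\f u_g f_0\right|^{\alpha}=F\cdot\left(\phi\circ\f a_g\right)$ there; moreover $\phi\geq0$, $\int\phi\,d\nu=\int_{\mathcal{P}}\left|f_0\right|^{\alpha}d\mu>0$, so $\phi\in L^{1}\left(\nu\right)$ and $\left\{\phi>0\right\}\cap\mathcal{P}=\left\{f_0\neq0\right\}\cap\mathcal{P}$.

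The main work is the lower bound. Choosing compact $K\subset\left(0,\infty\right)$ with $E:=\left\{\left|f_0\right|\in K\right\}\cap\mathcal{P}$ of positive measure, on $E$ one has $\left|f_0\right|^{\alpha}=F\phi\in\left[a,b\right]$ with $0<a<b<\infty$, so $\left|\f u_g f_0\right|^{\alpha}\geq a\,\left(\phi\circ\f a_g\right)/\phi$ there. I would then decompose $E$ dyadically by the size of $\phi$ into $E_j=E\cap\left\{2^{j}\leq\phi<2^{j+1}\right\}$ and fix $j$ with $\nu\left(E_j\right)>0$. Taking $\epsilon^{\alpha}=a/4$ turns the $\omega$-dependent threshold into the fixed set $G_j=\left\{\phi>2^{j-1}\right\}\supset E_j$, so that on $E_j$ the event $\left\{\left|\f u_g f_0\right|>\epsilon\right\}$ contains $\left\{\f a_g\,\cdot\in G_j\right\}$. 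Passing from $d\mu$ to $d\nu$ (using $1/F\geq 2^{j}/b$ on $E_j$) and applying the mean (or Lindenstrauss pointwise) ergodic theorem for the measure-preserving action on $\left(\mathcal{P},\nu\right)$, one gets $\frac{1}{\left|F_N\right|}\sum_{g\in F_N}1_{G_j}\circ\f a_g\to\mathbb{E}_{\nu}\left(1_{G_j}\mid\mathcal{I}\right)$ in $L^{1}\left(\nu\right)$, where $\mathcal{I}$ is the invariant $\sigma$-algebra. Hence the $\liminf$ of the full average is bounded below by $\tfrac{2^{j}}{b}\int_{E_j}\mathbb{E}_{\nu}\left(1_{G_j}\mid\mathcal{I}\right)d\nu\geq\tfrac{2^{j}}{b}\,\nu\left(E_j\right)^{2}>0$, the last step because $E_j\subset G_j$ gives $\int_{E_j}\mathbb{E}_{\nu}\left(1_{G_j}\mid\mathcal{I}\right)d\nu\geq\int\mathbb{E}_{\nu}\left(1_{E_j}\mid\mathcal{I}\right)^{2}d\nu\geq\nu\left(E_j\right)^{2}$. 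This strictly positive $\liminf$ contradicts weak-mixing, completing the contrapositive.

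The hardest part will be the passage to the invariant probability measure together with the ergodic averaging, since the threshold defining $\left\{\left|\f u_g f_0\right|>\epsilon\right\}$ depends on $\omega$ through $\phi\left(\omega\right)$ and so no ergodic theorem applies directly. The dyadic level-set decomposition is precisely what reduces the problem to averages $\frac{1}{\left|F_N\right|}\sum_g 1_{G_j}\circ\f a_g$ of a single fixed set, after which positivity of the limit follows from the standard fact that $\mathbb{E}_{\nu}\left(1_{G_j}\mid\mathcal{I}\right)>0$ a.e.\ on $G_j\supset E_j$. The only other subtlety, resolved by minimality as above, is guaranteeing that $f_0$ is nonzero on a positive-measure part of $\mathcal{P}$, so that the process genuinely detects the positive component.
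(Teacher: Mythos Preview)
Your argument is correct and in fact more self-contained than the paper's. The paper proceeds in two stages: first it uses the functoriality of the Rosiński representation to pass to the equivalent $\f a$-invariant probability measure $\nu$ on all of $\Omega$ (assuming the representation is entirely positive), so that $\f u_g^{\left(\nu\right)}f=f\circ\f a_g$ and the offending event becomes $\left\{\left|f_0'\right|^{\alpha}\circ\f a_g\in K\right\}$ for a \emph{fixed} set; the von Neumann ergodic theorem then gives the positive limit directly with no dyadic decomposition. Having handled only the fully positive case, the paper then needs a second step, appealing to an argument of Samorodnitsky \cite[Theorem~3.1]{samorodnitsky2005} to pass from ``$\mathbf{X}^{\mathcal P}$ non-ergodic'' to ``$\mathbf{X}$ non-ergodic''. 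Your approach instead stays with the original $\mu$-representation, encodes the same measure change in the identity $\left|\f u_g f_0\right|^{\alpha}=F\cdot\left(\phi\circ\f a_g\right)$ on $\mathcal P$, and pays for the $\omega$-dependence of the resulting threshold with the dyadic level-set trick; the reward is that you treat the general case $\mu\left(\mathcal P\right)>0$ directly and avoid the external reference. Both routes rest on the same ergodic-theoretic input (the mean ergodic theorem for the invariant probability $\nu$) and on the same use of minimality to ensure $f_0\not\equiv 0$ on $\mathcal P$.
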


Before we prove Proposition \ref{Proposition: ErgodicThenNull} let us recall the following functoriality of the Rosiński spectral representation. Suppose that $\mathbf{X}$ has a Rosiński spectral representation over the $G$-system $G\overset{\f a}{\curvearrowright}\left(\Omega,\mu\right)$. We then write
$$\mathbf{X}\ed\left(\int_{\Omega}\f u_{g}^{\left(\mu\right)}f_{0}dM_{\alpha}^{\left(\mu\right)}\right)_{g\in G},$$
where $M_{\alpha}^{\left(\mu\right)}$ is an $S\alpha S$ random measure controlled by $\mu$, $f_{0}\in L^{\alpha}\left(\mu\right)$ and
$$\f u_{g}^{\left(\mu\right)}f=c_{g}\sqrt[\alpha]{\frac{d\mu\circ\f a_{g}}{d\mu}}f_{0}\circ\f a_{g},\quad g\in G.$$
If $\nu$ is any probability measure that is mutually absolutely continuous with $\mu$, we can produce another Rosiński spectral representation of $\mathbf{X}$ over $G\overset{\f a}{\curvearrowright}\left(\Omega,\nu\right)$ as follows. Put
$$\Delta:=\frac{d\nu}{d\mu}\in L^{1}\left(\mu\right)$$
and note the relation
$$\frac{d\mu\circ\f a_{g}}{d\mu}=\frac{\Delta}{\Delta\circ\f a_{g}}\frac{d\nu\circ\f a_{g}}{d\nu},\quad g\in G.$$
Letting $\f u_{g}^{\left(\nu\right)}$ be defined analogously to $\f u_{g}^{\left(\mu\right)}$, we see that for every $g\in G$,
$$\f u_{g}^{\left(\mu\right)}f_{0}=c_{g}\sqrt[\alpha]{\frac{\Delta}{\Delta\circ\f a_{g}}}\sqrt[\alpha]{\frac{d\nu\circ\f a_{g}}{d\nu}}f_{0}\circ\f a_{g}=\sqrt[\alpha]{\Delta}\f u_{g}^{\left(\nu\right)}\left(f_{0}/\sqrt[\alpha]{\Delta}\right).$$
It is then routine to check that
$$\mathbf{X}\ed\left(\int_{\Omega}\f u_{g}^{\left(\nu\right)}\left(f_{0}/\sqrt[\alpha]{\Delta}\right)dM_{\alpha}^{\left(\nu\right)}\right)_{g\in G}$$
is also a Rosiński spectral representation of $\mathbf{X}$, where $M_{\alpha}^{\left(\nu\right)}$ is an $S\alpha S$ random measure controlled by $\nu$.

\begin{proof}[Proof of Proposition \ref{Proposition: ErgodicThenNull}]
By Theorem \ref{Theorem: Ergodicity iff Weak-Mixing} we may assume that $\mathbf{X}$ is weakly-mixing. Let us first show that the null part of the Rosiński spectral representation of $\mathbf{X}$ does not vanish.  Suppose toward a contradiction otherwise, so that by Theorem \ref{Theorem: Positive-Null Decomposition} there exists an a.c.i.p. $\nu$ which is mutually absolutely continuous with $\mu$ that is underlying the Rosiński minimal spectral representation.  From the above discussion we may assume that the Rosiński spectral representation is over $G\overset{\f a}{\curvearrowright}\left(\Omega,\nu\right)$, where $\nu$ is an $\f a$-invariant probability measure. Let $f_0\in L^{\alpha}\left(\nu\right)$ be the special function underlying this Rosiński spectral representation.

We now show that we are in violation of Condition $\left(4\right)$ for weak-mixing as in Theorem \ref{Theorem: Weak-mixing Characterization}.  Note that by the $\f a$-invariance of $\nu$, simply
$$\f u_{g}^{\left(\nu\right)}f=f\circ\f a_{g},\quad g\in G,\quad f\in L^1\left(\nu\right).$$
Let $\ip{\cdot}{\cdot}_{\nu}$ denotes the standard inner product on $L^{2}\left(\nu\right)$ and $\mathrm{Inv}\left(\f a\right)$ the σ-algebra of $\f a$-invariant Borel sets. From the von Neumann Ergodic Theorem, for every compact set $K\subset\left(0,\infty\right)$,
\begin{align*}
    & {\textstyle \frac{1}{\left|F_{N}\right|}\sum_{g\in F_{N}}}\nu\left(\left|f_{0}\right|^{\alpha}\in K,\left|\f u_{g}^{\left(\nu\right)}\left(f_{0}\right)\right|^{\alpha}\in K\right) \\
    & \qquad = \ip{1_{\left\{ \left|f_{0}\right|^{\alpha}\in K\right\} }}{{\textstyle \frac{1}{\left|F_{N}\right|}\sum_{g\in F_{N}}}1_{\left\{ \left|f_{0}\right|^{\alpha}\in K\right\} }\circ\f a_{g}}_{\nu} \\
    & \qquad\qquad \xrightarrow[N\to\infty]{} \ip{1_{\left\{ \left|f_{0}\right|^{\alpha}\in K\right\} }}{\mathbb{E}\left(1_{\left\{ \left|f_{0}\right|^{\alpha}\in K\right\} }\mid\mathrm{Inv}\left(\f a\right)\right)}_{\nu}.
\end{align*}
Since $\nu$ is a probability measure, $1_{\left\{ \left|f_{0}\right|^{\alpha}\in K\right\} }\nearrow1$ as $K\nearrow\left(0,\infty\right)$ in $L^{2}\left(\nu\right)$. Then for a sufficiently large $K$ the last term is strictly positive. Then, in view of Theorem \ref{Theorem: Weak-mixing Characterization}, it follows that $\mathbf{X}$ is not weakly-mixing.

We have shown that, as $\mathbf{X}$ is weakly-mixing, the null part in every Rosiński spectral representation of $\mathbf{X}$ does not vanish. Next we show further that when the Rosiński spectral representation of $\mathbf{X}$ is minimal its positive part $\mathcal{P}$ vanishes. Suppose toward a contradiction that $\mathcal{P}$ is of positive measure. By the minimality it follows that $\mathbf{X}^{\mathcal{P}}\neq 0$ and by the previous part of the proof $\mathbf{X}^{\mathcal{P}}$ is not weakly-mixing hence, by Theorem \ref{Theorem: Ergodicity iff Weak-Mixing}, it is also non-ergodic. Then following the very same argument as in the end of Samorodnitsky's proof of \cite[Theorem~3.1]{samorodnitsky2005} we conclude that $\mathbf{X}$ itself is non-ergodic, which is a contradiction.
\end{proof}

\subsection{Characterization of positive and null Rosiński minimal spectral representation}

We now formulate tests for processes having positive or null Rosiński minimal spectral representation. We recall that a Følner sequence $\left(F_N\right)_{N=1}^{\infty}$ is called {\it tempered} if there exists a positive constant $C$ such that, for every $N$,
$$\left|{\textstyle \bigcup_{K\leq N}}F_{K}^{-1}F_{N+1}\right|\leq C\left|F_{N+1}\right|.$$
By \cite[Proposition~1.4]{lindenstrauss} every Følner sequence has a tempered Følner subsequence.  By the Lindenstrauss Pointwise Ergodic Theorem \cite[Theorem~1.2]{lindenstrauss}, if $G\overset{\f a}{\curvearrowright}\left(\Omega,\mu\right)$ is a measure preserving action of a countable amenable group $G$ on a standard probability space $\left(\Omega,\mu\right)$, then for every tempered Følner sequence $\left(F_{N}\right)_{N=1}^{\infty}$ for $G$ and every $f\in L^{1}\left(\mu\right)$,
$$\lim_{N\to\infty}{\textstyle \frac{1}{\left|F_{N}\right|}\sum_{g\in F_{N}}}f\circ\f a_{g}=\mathbb{E}\left(f\mid\mathrm{Inv}\left(\f a\right)\right)\text{ almost surely.}$$
We shall exploit the fact that the limit in the Lindenstrauss Pointwise Ergodic Theorem holds, in particular, also in measure.

\begin{theorem}
\label{Theorem: Positive Null}
Let $G$ be a countable amenable group and $\mathbf{X}$ a stationary non-Gaussian $S\alpha S$ $G$-process with Rosiński minimal spectral representation as in Theorem \ref{Theorem: Rosiński}.
\begin{enumerate}
\item This spectral representation of $\mathbf{X}$ is null if and only if for every (equivalently, there exists a) Følner sequence $\left(F_{N}\right)_{N=1}^{\infty}$ for $G$ we have that
$$\frac{1}{\left|F_{N}\right|}\sum_{g\in F_{N}}\widehat{\f a}_{g}\left|f_{0}\right|^{\alpha}\xrightarrow[N\to\infty]{}0\text{ in measure}.$$
\item This spectral representation of $\mathbf{X}$ is positive if and only if every (equivalently, there exists a) Følner sequence $\left(F_{N}\right)_{N=1}^{\infty}$ for $G$ admits a (tempered) Følner subsequence $\left(F_{N_{M}}\right)_{M=1}^{\infty}$ such that
$$\frac{1}{\left|F_{N_{M}}\right|}\sum_{g\in F_{N_{M}}}\widehat{\f a}_{g}\left|f_{0}\right|^{\alpha}\xrightarrow[M\to\infty]{}\mathbb{E}\left(\left|f_{0}\right|^{\alpha}\mid\mathrm{Inv}\left(\f a\right)\right)\text{ almost surely}.$$
\end{enumerate}
\end{theorem}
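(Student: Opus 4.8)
The two parts share a common engine: reduce the dual action $\widehat{\f a}$ to a Koopman action by passing to an equivalent probability measure (and, in the positive case, to an invariant one), exactly as in the conjugation identity \eqref{eq: DaaD}. Throughout I would write $h=\left|f_{0}\right|^{\alpha}\in L^{1}\left(\mu\right)$ and $A_{N}=\frac{1}{\left|F_{N}\right|}\sum_{g\in F_{N}}\widehat{\f a}_{g}h$; since each $\widehat{\f a}_{g}$ preserves the $\mu$-integral, $\int_{\Omega}A_{N}\,d\mu=\int_{\Omega}h\,d\mu=:c_{0}$ for every $N$. The single place where minimality enters is the following observation, which I would record at the outset: if $B\subset\Omega$ is $\f a$-invariant with $\mu\left(B\right)>0$, then $h>0$ on a positive-measure subset of $B$. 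Indeed, Definition \ref{Definition: Spectral Representation}(1) says $\bigcup_{g}\f a_{g}^{-1}\left\{ f_{0}\neq0\right\}$ exhausts $\Omega$ modulo $\mu$, so by $\f a$-invariance and non-singularity $\left\{ h>0\right\} \cap B$ cannot be $\mu$-null.

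For Part (1) the forward implication is Theorem \ref{Theorem: Krengel Amenable} transported to the σ-finite setting. I would fix a probability measure $\mu_{1}\sim\mu$ together with the conjugating isometry $\mathfrak{D}\left(f\right)=\frac{d\mu_{1}}{d\mu}f$ satisfying \eqref{eq: DaaD}. If the system is null then so is $G\overset{\f a}{\curvearrowright}\left(\Omega,\mu_{1}\right)$, so Theorem \ref{Theorem: Krengel Amenable} yields $\frac{1}{\left|F_{N}\right|}\sum_{g\in F_{N}}\widehat{\f a}_{g}^{\left(1\right)}\mathfrak{D}^{-1}h\to0$ in measure along every Følner sequence, and applying $\mathfrak{D}$ (multiplication by a fixed a.e.-finite density preserves convergence in measure on finite-measure sets) gives $A_{N}\to0$ in measure. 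The reverse implication I would prove by contraposition: if the system is not null then, by Theorem \ref{Theorem: Positive-Null Decomposition}, the set $\mathcal{P}$ has positive measure and carries an invariant probability measure $\nu\sim\mu|_{\mathcal{P}}$ with density $\phi$. The conjugation identity (verified as in the functoriality discussion preceding Proposition \ref{Proposition: ErgodicThenNull}, using $\f a_{g}'=\phi/\phi\circ\f a_{g}$) gives $A_{N}=\phi\cdot\frac{1}{\left|F_{N}\right|}\sum_{g\in F_{N}}\left(h/\phi\right)\circ\f a_{g}$ on $\mathcal{P}$, with $h/\phi\in L^{1}\left(\nu\right)$. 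Passing to a tempered subsequence and invoking Lindenstrauss, these averages converge a.s. to $\mathbb{E}_{\nu}\left(h/\phi\mid\mathrm{Inv}\left(\f a\right)\right)$, whose $\nu$-integral is $\int_{\mathcal{P}}h\,d\mu>0$ by the minimality observation; hence $A_{N}\not\to0$ in measure on $\mathcal{P}$. Since the forward half holds for every Følner sequence and the contrapositive defeats even one, the chain ``null $\iff$ for every $\iff$ there exists'' closes.

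For Part (2), in the positive case I would fix the invariant probability measure $\nu\sim\mu$ with density $\phi=d\nu/d\mu$ on all of $\Omega$, so that $A_{N}=\phi\cdot\frac{1}{\left|F_{N}\right|}\sum_{g\in F_{N}}\left(h/\phi\right)\circ\f a_{g}$ globally. Given any Følner sequence I would extract a tempered Følner subsequence via \cite[Proposition~1.4]{lindenstrauss} and apply the Lindenstrauss Pointwise Ergodic Theorem to $h/\phi\in L^{1}\left(\nu\right)$ on the measure-preserving system $\left(\Omega,\nu\right)$; multiplying the a.s. limit by $\phi$ gives $A_{N_{M}}\to\phi\,\mathbb{E}_{\nu}\left(h/\phi\mid\mathrm{Inv}\left(\f a\right)\right)$ a.s. This limit is exactly the object I take as $\mathbb{E}\left(\left|f_{0}\right|^{\alpha}\mid\mathrm{Inv}\left(\f a\right)\right)$, the $\widehat{\f a}$-equivariant conditional expectation; it is independent of $\nu$ and, crucially, satisfies the integral identity $\int_{\Omega}\mathbb{E}\left(\left|f_{0}\right|^{\alpha}\mid\mathrm{Inv}\left(\f a\right)\right)d\mu=c_{0}$.

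The backward implication of Part (2) is the step I expect to be the main obstacle, and I would again argue by contraposition. If the representation is not positive then $\mathcal{N}$ has positive measure and, by the minimality observation, $\int_{\mathcal{N}}h\,d\mu>0$. On $\mathcal{N}$ the system is null, so by Part (1) one has $A_{N}\to0$ in measure on $\mathcal{N}$, whence any a.s. subsequential limit $g_{\infty}$ vanishes a.s. on $\mathcal{N}$. Combining this with conservation of mass on the measure-preserving part $\mathcal{P}$ gives $\int_{\Omega}g_{\infty}\,d\mu=\int_{\mathcal{P}}h\,d\mu<c_{0}=\int_{\Omega}\mathbb{E}\left(\left|f_{0}\right|^{\alpha}\mid\mathrm{Inv}\left(\f a\right)\right)d\mu$, so $g_{\infty}$ cannot coincide with $\mathbb{E}\left(\left|f_{0}\right|^{\alpha}\mid\mathrm{Inv}\left(\f a\right)\right)$, contradicting the hypothesis. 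The delicate points requiring care are the bookkeeping of the three mutually absolutely continuous measures $\mu,\mu_{1},\nu$, and the precise justification that the mass $\int_{\mathcal{N}}h\,d\mu$ genuinely escapes — that is, that convergence in measure to $0$ on the null part forces the a.s. limit to shed exactly this mass — which is where the minimality observation and the integral identity for $\mathbb{E}\left(\left|f_{0}\right|^{\alpha}\mid\mathrm{Inv}\left(\f a\right)\right)$ must be used in tandem.
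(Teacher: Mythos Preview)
Your proposal is correct and follows essentially the same route as the paper: reduce the dual action to a Koopman action via an equivalent (invariant) probability measure, invoke Theorem~\ref{Theorem: Krengel Amenable}/Theorem~\ref{Theorem: Positive-Null Decomposition} for the null part and the Lindenstrauss Pointwise Ergodic Theorem on the positive part, and close with the minimality observation that $\left|f_{0}\right|^{\alpha}$ cannot vanish on a nontrivial invariant set. The paper's proof is a bit more economical in two places. First, it proves the converse of Part~(1) directly rather than by contraposition: assuming $A_{F_{N}}\left|f_{0}\right|^{\alpha}\to 0$ in measure, it applies Lindenstrauss to $1_{\mathcal{P}}\frac{d\mu}{d\nu}\left|f_{0}\right|^{\alpha}\in L^{1}\left(\nu\right)$ and uses strict positivity of the conditional expectation to force $1_{\mathcal{P}}\left|f_{0}\right|^{\alpha}=0$, hence $\mu\left(\mathcal{P}\right)=0$ by minimality. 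Second, for the backward direction of Part~(2) the paper (via the remark following the theorem) would simply note that $\mathbb{E}\left(\left|f_{0}\right|^{\alpha}\mid\mathrm{Inv}\left(\f a\right)\right)>0$ almost surely, which immediately contradicts the a.s.\ limit vanishing on $\mathcal{N}$; your mass-escape/integral-comparison argument reaches the same contradiction but is more circuitous and requires you to justify that the $\sigma$-finite conditional expectation satisfies $\int_{\Omega}\mathbb{E}\left(\left|f_{0}\right|^{\alpha}\mid\mathrm{Inv}\left(\f a\right)\right)d\mu=c_{0}$ in the non-positive setting as well.
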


Note that since the conditional expectation is a strictly positive operator and the Rosiński spectral representation is assumed to be minimal, $\mathbb{E}\left(\left|f_{0}\right|^{\alpha}\mid\mathrm{Inv}\left(\f a\right)\right)$ is positive almost surely. Thus, the two alternatives in Theorem \ref{Theorem: Positive Null} stand in the opposite extremes of each other.

For the ease of notation in the following proof, for $F\Subset G$ we let $A_{F}:L^{1}\left(\mu\right)\to L^{1}\left(\mu\right)$ be the operator
$$A_{F}f=\frac{1}{\left|F\right|}\sum_{g\in F}\widehat{\f a}_{g}f.$$
Note that if $E\in\mathrm{Inv}\left(\f a\right)$ then $A_{F}\left(1_{E}f\right)=1_{E}A_{F}\left(f\right)$. 

\begin{proof}[Proof of Theorem \ref{Theorem: Positive Null}]
We prove the first part and the second part can be proved in a similar fashion. If the Rosiński minimal spectral representation of $\mathbf{X}$ is null, then by Theorem \ref{Theorem: Krengel Amenable} applied to $\left|f_{0}\right|^{\alpha}\in L^{1}\left(\mu\right)$ we have $A_{F_{N}}\left|f_{0}\right|^{\alpha}\xrightarrow[N\to\infty]{\mu}0$ for every Følner sequence $\left(F_{N}\right)_{N=1}^{\infty}$ for $G$.

Conversely, suppose that $A_{F_{N}}\left|f_{0}\right|^{\alpha}\xrightarrow[N\to\infty]{\mu}0$ for some Følner sequence $\left(F_{N}\right)_{N=1}^{\infty}$ for $G$. By the definition of the positive part $\mathcal{P}$ in Theorem \ref{Theorem: Positive-Null Decomposition} the measure $\mu\mid_{\mathcal{P}}$ has an a.c.i.p. that we denote $\nu$. Consider the function
$$1_{\mathcal{P}}\frac{d\mu}{d\nu}\left|f_{0}\right|^{\alpha}\in L^{1}\left(\nu\right).$$
Applying the Lindenstrauss Pointwise Ergodic Theorem, there is a subsequence $\left(N_{M}\right)_{M=1}^{\infty}$ of the positive integers such that $\left(F_{N_M}\right)_{M=1}^{\infty}$ is tempered and
$$\lim_{M\to\infty}{\textstyle \frac{1}{\left|F_{N_M}\right|}\sum_{g\in F_{N_M}}}\left(1_{\mathcal{P}}\frac{d\mu}{d\nu}\left|f_{0}\right|^{\alpha}\right)\circ\f a_{g}=\mathbb{E}\left(1_{\mathcal{P}}\frac{d\mu}{d\nu}\left|f_{0}\right|^{\alpha}\mid\mathrm{Inv}\left(\f a\right)\right)\text{ \ensuremath{\nu}-almost surely.}$$
However, since $\mathcal{P}$ is an $\f a$-invariant set and $\nu$ is an $\f a$-invariant measure, for every $g\in G$ we may write
$$\left(1_{\mathcal{P}}\frac{d\mu}{d\nu}\left|f_{0}\right|^{\alpha}\right)\circ\f a_{g}=1_{\mathcal{P}}\frac{d\mu\circ\f a_{g}}{d\nu}\left|f_{0}\right|^{\alpha}\circ\f a_{g}=1_{\mathcal{P}}\frac{d\mu}{d\nu}\widehat{\f a}_{g}\left|f_{0}\right|^{\alpha}.$$
It follows that
$$1_{\mathcal{P}}\frac{d\mu}{d\nu}\lim_{M\to\infty}A_{F_{N_{M}}}\left|f_{0}\right|^{\alpha}=\mathbb{E}\left(1_{\mathcal{P}}\frac{d\mu}{d\nu}\left|f_{0}\right|^{\alpha}\mid\mathrm{Inv}\left(\f a\right)\right)\text{ \ensuremath{\nu\text{- hence }\mu}-almost surely.}$$
Here we used that on $\mathcal{P}$, $\nu$ and $\mu$ are mutually absolutely continuous. Together with the assumption $A_{F_{N}}\left|f_{0}\right|^{\alpha}\xrightarrow[N\to\infty]{\mu}0$ we obtain that
$$\mathbb{E}\left(1_{\mathcal{P}}\frac{d\mu}{d\nu}\left|f_{0}\right|^{\alpha}\mid\mathrm{Inv}\left(\f a\right)\right)=0\text{ }\mu\text{-almost surely.}$$
Since the conditional expectation is a strictly positive operator and $d\mu/d\nu$ is positive almost surely, we conclude that $1_{\mathcal{P}}\left|f_{0}\right|^{\alpha}=0$, showing that the Rosiński minimal spectral representation of $\mathbf{X}$ is null.
\end{proof}

\section{Constructions and examples}

Here we use constructions from ergodic theory to produce stationary non-Gaussian $S\alpha S$ processes on amenable groups with interesting ergodic properties.

Let $G$ be a countable group. A $G$-system $G\overset{\f a}{\curvearrowright}\left(\Omega,\mu\right)$ is called {\it conservative} if it has no wandering set of positive measure, where a Borel set $W$ in $\Omega$ is {\it wandering} if the sets in $\left\{\f a_g\left(W\right):g\in G\right\}$ are pairwise disjoint up to a $\mu$-null set. On the other extreme, a $G$-system $G\overset{\f a}{\curvearrowright}\left(\Omega,\mu\right)$ is called {\it totally dissipative} if there exists a wandering set $W$ in $\Omega$ such that, up to a $\mu$-null set,
$$\Omega={\textstyle \bigcup_{g\in G}}\f a_g\left(W\right).$$
By the well-known {\it Hopf Decomposition}, for every $G$-system $G\overset{\f a}{\curvearrowright}\left(\Omega,\mu\right)$ there is a partition
$$\Omega=\mathcal{C}\cup\mathcal{D}$$
of $\Omega$ into disjoint $\f a$-invariant sets $\mathcal{C}$ and $\mathcal{D}$ such that the $G$-system $G\overset{\f a}{\curvearrowright}\left(\mathcal{C},\mu\right)$ is conservative and the $G$-system $G\overset{\f a}{\curvearrowright}\left(\mathcal{D},\mu\right)$ is totally dissipative. For detailed presentations we refer to \cite[\textsection 3.1]{krengel1985} and \cite[\textsection 1.6]{aaronson}.

Suppose that $\mathbf{X}$ is a stationary non-Gaussian $S\alpha S$ $G$-process with minimal spectral representation as in Definition \ref{Definition: Spectral Representation},
\begin{equation}
\mathbf{X}\ed\left(\int_{\Omega}\f u_{g}f_{0}dM_{\alpha}\right)_{g\in G}.
\end{equation}
Let $\mathcal{C}\cup\mathcal{D}$ be the Hopf Decomposition of the $G$-system underlying this spectral representation. As $\mathcal{C}$ and $\mathcal{D}$ are invariant sets, we can write $\mathbf{X}\ed\mathbf{X}^{\mathcal{C}}+\mathbf{X}^{\mathcal{D}}$ where
$$\mathbf{X}^{\mathcal{C}}=\left(\int_{\mathcal{C}}\f u_g f_{0}dM_{\alpha}\right)_{g\in G}\,\,\,\text{ and  }\,\,\,\mathbf{X}^{\mathcal{D}}=\left(\int_{\mathcal{D}}\f u_g f_{0}dM_{\alpha}\right)_{g\in G}.$$
Then $\mathbf{X}^{\mathcal{C}}$ and $\mathbf{X}^{\mathcal{D}}$ are independent stationary non-Gaussian $S\alpha S$ $G$-processes (cf. \cite{rosinski1995, roysam}).

It turns out that the process $\mathbf{X}^{\mathcal{D}}$ has always a certain structure called {\it mixed moving average} \cite[Corollary~4.6]{rosinski1995}. Many properties of mixed moving average $S\alpha S$ processes are known, among them is that they are strongly-mixing \cite[Theorem~3]{surgailis1993}. Thus, in the constructions to follow we focus on stationary $S\alpha S$ $G$-processes arising from conservative $G$-systems.

\begin{remark}
\label{Remark: conservative part}
The Hopf Decomposition of a stationary $S\alpha S$ $G$-process is defined with respect to a {\it minimal} spectral representation. Yet, if $\mathbf{X}$ is a stationary $S\alpha S$ $G$-process with an arbitrary conservative spectral representation then $\mathbf{X}^{\mathcal{D}}$ vanishes. In order to see this, observe that by the virtue of Hardin's proof of the existence of minimal spectral representation \cite[Theorem~5.1]{hardin}, if we are given any spectral representation of $\mathbf{X}$, its minimal spectral representation can be realized by passing to a sub-σ-algebra of the $G$-system underlying the given spectral representation. As conservativeness is preserved when passing to a sub-σ-algebra, the existence of a conservative spectral representation of $\mathbf{X}$ implies that its minimal spectral representation is conservative, thus $\mathbf{X}^{\mathcal{D}}$ vanishes.
\end{remark}

Recall that a $G$-process $\mathbf{X}$ is {\it strongly-mixing} if for every pair of events $A,B$ in $\mathbf{X}$,
$$\lim_{g\in G}\mathbb{P}_{\mathbf{X}}\left(A\cap\lambda_{g}\left(B\right)\right)=\mathbb{P}_{\mathbf{X}}\left(A\right)\mathbb{P}_{\mathbf{X}}\left(B\right).$$
It is easy to see that a strongly-mixing $G$-process on a countable amenable group $G$ is weakly-mixing as in Definition \ref{Definition: Weak-mixing}.

Similarly to Theorem \ref{Theorem: weak-mixing null} about weak-mixing, the following theorem is convenient for checking whether a stationary $S\alpha S$ $G$-process is strongly-mixing. It was proved by Gross in \cite[Theorem~2.7]{gross} for $\mathbb{Z}$ and its proof remains valid for all countable groups.

\begin{theorem}[Gross]
\label{Theorem: strong-mixing characterization}
Let $G$ be a countable group and $\mathbf{X}$ a stationary non-Gaussian $S\alpha S$ $G$-process with (possibly non-minimal) spectral representation as in Definition \ref{Definition: Spectral Representation}. Then $\mathbf{X}$ is strongly-mixing if and only if, for every compact set $K\subset\left(0,\infty\right)$ and every $\epsilon>0$,
$$\lim_{g\in G}\mu\left(\left|f_{0}\right|^{\alpha}\in K,\left|\f u_g f_0\right|^{\alpha}>\epsilon\right)=0.$$
\end{theorem}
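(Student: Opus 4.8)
The plan is to follow the same skeleton as the proof of the weak-mixing characterization in Theorem~\ref{Theorem: Weak-mixing Characterization}, systematically replacing the averaged Følner-density limits by the ordinary limit $\lim_{g\in G}$ in the sense of \eqref{eq: countable limits}. A key simplification is that strong-mixing is a statement about the genuine limit along $G$, so no amenability is used: neither Følner sequences nor the Koopman--von Neumann lemma enter, and every step is valid for an arbitrary countable group $G$. In effect the whole argument is a transcription of Gross's proof for $\mathbb{Z}$ with $n\to\infty$ replaced by the limit \eqref{eq: countable limits} along $G$.

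First I would reduce strong-mixing to a statement about exponentials. Just as the three formulations of weak-mixing in Definition~\ref{Definition: Weak-mixing} are equivalent, the event-wise definition of strong-mixing given above is equivalent, through the Stone--Weierstrass density of the span of $\left\{\exp\left(i\phi\right):\phi\in\mathrm{Lin}\left(\mathbf{X}\right)\right\}$ in $L^2\left(\mathbf{X}\right)$, to the requirement that, for all $\phi,\phi'\in\mathrm{Lin}\left(\mathbf{X}\right)$,
$$\lim_{g\in G}\mathbb{E}\left(\exp\left(i\phi\right)\overline{\exp\left(i\lambda_g\phi'\right)}\right)=\mathbb{E}\left(\exp\left(i\phi\right)\right)\overline{\mathbb{E}\left(\exp\left(i\phi'\right)\right)}.$$
Expressing $\phi,\phi'$ through the spectral representation as $f_1,f_2\in\mathrm{Lin}\left(f_0\right)$ and using the identity \eqref{eq: alpha norm}, the left-hand side equals $\exp\left(-\no{f_1-\f u_g f_2}_{\alpha}^{\alpha}\right)$; by the symmetry of the quasi-norm this is exactly the strong-mixing analogue of Condition~$(2)$ in Theorem~\ref{Theorem: Weak-mixing Characterization},
$$\lim_{g\in G}\left(\exp\left(-\no{f_1+\f u_g f_2}_{\alpha}^{\alpha}\right)-\exp\left(-\no{f_1}_{\alpha}^{\alpha}\right)\exp\left(-\no{f_2}_{\alpha}^{\alpha}\right)\right)=0.$$

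Next I would invoke Gross's analytic passage from the exponential condition to the measure-theoretic one. The argument of \cite[Proposition~2.5(i)]{gross} feeding into \cite[Theorem~2.7]{gross} converts the displayed condition into
$$\lim_{g\in G}\mu\left(\left|f_1\right|\in K,\,\left|\f u_g f_2\right|>\epsilon\right)=0$$
for every compact $K\subset\left(0,\infty\right)$ and every $\epsilon>0$. The decisive observation is that each estimate in Gross's manipulation is carried out pointwise in a single $g\in G$ and never averages over the group, so the ordinary limit is preserved verbatim and the proof transfers from $\mathbb{Z}$ to any countable $G$ without change.

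Finally I would reduce a general pair $f_1,f_2\in\mathrm{Lin}\left(f_0\right)$ to the single pair $f_1=f_0$, $f_2=f_0$, recovering precisely the stated condition with $\left|f_0\right|^{\alpha}\in K$ and $\left|\f u_g f_0\right|^{\alpha}>\epsilon$. This is the Maruyama-type reduction referenced in the last line of Theorem~\ref{Theorem: Weak-mixing Characterization}, following \cite{maruyama} and justified in \cite[Appendix~A]{wangroystoev}. I expect the only real work to lie here: one must check that the reduction, which in the weak-mixing setting is phrased through full-density sets and the simultaneous Koopman--von Neumann lemma, collapses cleanly once those devices are stripped away and replaced by ordinary limits. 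Because strong-mixing discards the averaging entirely, this step should be strictly easier than its weak-mixing counterpart.
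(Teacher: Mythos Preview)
Your proposal is correct and is exactly the approach the paper takes: the paper does not supply an independent proof of this theorem but simply states that Gross's proof for $\mathbb{Z}$ in \cite[Theorem~2.7]{gross} ``remains valid for all countable groups,'' and your sketch is a faithful unpacking of that transference via the strong-mixing analogues of the steps in Theorem~\ref{Theorem: Weak-mixing Characterization}. If anything you have written more than the paper does.
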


\subsection{Bernoulli shift constructions}

Let us introduce the (non-singular) Bernoulli shift model. Suppose that $G$ is a countable group and consider the space $\Sigma=\left\{0,1\right\}^G$ equipped with the usual product σ-algebra. For every $g\in G$ let $\rho_g$ be a distribution on $\left\{0,1\right\}$ such that $\rho_g\left(0\right)$ and $\rho_g\left(1\right)$ are non-zero, and consider the probability product measure $\rho=\bigotimes_{g\in G}\rho_g$ on $\Sigma$. The {\it shift} action of $G$ on $\Sigma$ is defined, for each $g\in G$ and $\omega=\left(\omega_{h}\right)_{h\in G}\in\Sigma$, by
$$\f s_{g}\left(\omega\right)=\left(\omega_{gh}\right)_{h\in G}.$$
When the shift is non-singular with respect to $\rho$, the $G$-system $G\overset{\f s}{\curvearrowright}\left(\Sigma,\rho\right)$ is called a (non-singular) {\it Bernoulli shift}. In order to check the non-singularity property for a Bernoulli shift one can use the fundamental {\it Kakutani Criterion} for mutual absolute continuity of probability product measures \cite{kakutani1948equivalence}, by which a pair of probability product measures $\rho=\bigotimes_{g\in G}\rho_{g}$ and $\varrho=\bigotimes_{g\in G}\varrho_{g}$ on $\Sigma$ are mutually absolutely continuous if and only if
\begin{equation}
\label{eq: Kakutani}
\sum_{g\in G}\sum_{i\in\left\{ 0,1\right\} }\left(\sqrt{\rho_{g}\left(i\right)}-\sqrt{\varrho_{g}\left(i\right)}\right)^{2}<\infty.
\end{equation}
(cf. \cite[\textsection 2]{vaeswahl2018} and \cite[\textsection 1.2]{bjorklundkosloff2018}).

The problem whether there is a conservative Bernoulli shift on $\mathbb{Z}$ that is null was answered positively few decades ago by Krengel \cite{krengel1970}, and this answer was strengthened by Hamachi \cite{hamachi1981bernoulli} who constructed such a Bernoulli shift without absolutely continuous invariant measure, also not a σ-finite one. See also \cite{kosloff2011, kosloff2014, danilenko2019}. Only in recent years, Vaes \& Wahl \cite{vaeswahl2018}; Björklund \& Kosloff \cite{bjorklundkosloff2018}; and Björklund, Kosloff \& Vaes \cite{bjorklundkosloffvaes2021} introduced examples of null conservative Bernoulli shifts on an arbitrary countable amenable group. Such an explicit construction can also be found in \cite[\textsection 6]{kosloffsoo2021}. Using those results we are able to show the following.

 \begin{proposition}
\label{Proposition: Existence}
Every countable amenable group $G$ admits, for every $\alpha\in\left(0,2\right)$, a non-zero stationary $S\alpha S$ $G$-process $\mathbf{X}$ that is strongly-mixing and is not a mixed moving average (namely, $\mathbf{X}^{\mathcal{D}}$ vanishes).
\end{proposition}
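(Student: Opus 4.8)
The plan is to realise $\mathbf X$ through a null conservative non-singular Bernoulli shift and to read off strong-mixing from Gross' criterion, Theorem~\ref{Theorem: strong-mixing characterization}. Using the cited results of Vaes--Wahl and Bj\"orklund--Kosloff(--Vaes) \cite{vaeswahl2018,bjorklundkosloff2018,bjorklundkosloffvaes2021} (or the explicit construction in \cite[\textsection 6]{kosloffsoo2021}), I would fix a product measure $\rho=\bigotimes_{g\in G}\rho_g$ on $\Sigma=\{0,1\}^G$ whose shift $G\overset{\f s}{\curvearrowright}(\Sigma,\rho)$ is non-singular, conservative and null, and take the special function to be $f_0=1_A$ for the one-coordinate cylinder $A=\{\omega:\omega_e=0\}$. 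The associated process
$$\mathbf X=\left(\intop_\Sigma\sqrt[\alpha]{\tfrac{d\rho\circ\f s_g}{d\rho}}\,f_0\circ\f s_g\,dM_\alpha\right)_{g\in G}$$
is a (possibly non-minimal) Rosi\'nski spectral representation with trivial cocycle, and it is non-zero because $\no{f_0}_\alpha=\rho(A)^{1/\alpha}>0$. Crucially, the same system and the same $f_0$ will serve every $\alpha\in(0,2)$ at once, since $|f_0|^\alpha=1_A$ does not depend on $\alpha$.

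First I would dispose of the ``not a mixed moving average'' clause. Since the shift is conservative, Remark~\ref{Remark: conservative part} shows that conservativity is inherited by the minimal spectral representation of $\mathbf X$, whence its dissipative part $\mathbf X^{\mathcal D}$ vanishes. As $\mathbf X\neq0$ is then purely conservative, it is not a mixed moving average.

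The substance of the statement is strong-mixing. Writing $\f s_g'=\tfrac{d\rho\circ\f s_g}{d\rho}$, for $f_0=1_A$ we have $|\f u_g f_0|^\alpha=\widehat{\f s}_g 1_A=\f s_g'\,1_{\f s_g^{-1}A}\le\f s_g'$, so that for every compact $K\subset(0,\infty)$ and every $\epsilon>0$,
$$\rho\!\left(|f_0|^\alpha\in K,\ |\f u_g f_0|^\alpha>\epsilon\right)\le\rho\!\left(\f s_g'>\epsilon\right).$$
By Theorem~\ref{Theorem: strong-mixing characterization} it therefore suffices to show that the Radon--Nikodym cocycle dissipates in measure, i.e.\ $\f s_g'\to0$ in $\rho$-measure as $g\to\infty$ in the sense of \eqref{eq: countable limits}. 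Here I would exploit the product structure: since $\rho\circ\f s_g=\bigotimes_{h}\rho_{g^{-1}h}$, the Hellinger affinity factorises and gives the exact identity
$$\mathbb E_\rho\big[\sqrt{\f s_g'}\big]=\prod_{h\in G}\big(1-H^2(\rho_{g^{-1}h},\rho_h)\big)\le e^{-D_g},\qquad D_g:=\sum_{h\in G}H^2(\rho_{g^{-1}h},\rho_h).$$
Markov's inequality then yields $\rho(\f s_g'>\epsilon)\le\epsilon^{-1/2}e^{-D_g}$, so the whole proposition comes down to the single asymptotic statement $D_g\to\infty$; equivalently, the pushed measures $\rho\circ\f s_g$ become asymptotically singular to $\rho$.

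The main obstacle is precisely this last point. Non-singularity of the shift is exactly the requirement that the Kakutani sum $\sum_h\sum_i\big(\sqrt{\rho_{g^{-1}h}(i)}-\sqrt{\rho_h(i)}\big)^2=2D_g$ be finite for each individual $g$, whereas strong-mixing needs the complementary cofinite divergence $D_g\to\infty$. This is a genuine strengthening that does not follow from conservativity and nullity alone. Writing $\rho_g(0)=\tfrac12+c_g$, so that $D_g\asymp\sum_h(c_{g^{-1}h}-c_h)^2$ for small $c_g$, the plan is to choose the marginals so that this quantity is finite for each $g$ (non-singularity) but tends to $\infty$ with $g$ (dissipation of the cocycle), while the cited recurrence and type criteria continue to force conservativity and nullity. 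Verifying that these three asymptotic demands are simultaneously realisable over an arbitrary countable amenable group --- rather than merely over $\mathbb Z^d$ --- is the crux of the construction.
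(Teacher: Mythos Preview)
Your framework is exactly the paper's: build the process over a conservative non-singular Bernoulli shift, use Remark~\ref{Remark: conservative part} to kill $\mathbf X^{\mathcal D}$, and deduce strong-mixing from Gross' criterion via the Markov bound $\rho(\f s_g'>\epsilon)\le\epsilon^{-1/2}\,\mathbb E_\rho[\sqrt{\f s_g'}]$. (Two cosmetic differences: the paper takes $f_0=1$ rather than $1_A$, so that $|\f u_g f_0|^\alpha=\f s_g'$ exactly; and nullity of the shift is not actually needed for the statement, only conservativity and the zero-type property.)

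You correctly isolate the only real issue: the dissipation $\mathbb E_\rho[\sqrt{\f s_g'}]\to0$, equivalently $D_g\to\infty$, is an extra requirement beyond non-singularity, conservativity and nullity, and you stop short of verifying it. The paper closes this gap without computing $D_g$ directly, using instead Lemma~\ref{Lemma: Kosloff Bernoulli shift is zero-type} (Kosloff's dichotomy): if the marginals $\rho_g$ converge to a nondegenerate limit $p$, then either $\rho$ is equivalent to the i.i.d.\ product $\bigotimes_g p$, or the shift is zero-type. In the Vaes--Wahl construction one has $\rho_g(0)=\tfrac12+\Psi(g)$ with $\Psi(g)\to0$ and $\sum_g\Psi(g)^2=\infty$; the first condition gives convergence of marginals to $(1/2,1/2)$, while the second forces, via Kakutani, that $\rho$ is \emph{not} equivalent to the uniform product. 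The dichotomy then delivers the zero-type property for free. Alternatively, the same Vaes--Wahl function already satisfies $\|c_g\|_{l^2(G)}^2\to\infty$ (this is part of their Proposition~6.8), which is essentially your $D_g\to\infty$ up to constants since the marginals are bounded away from $0$ and $1$; so your direct route also works once you quote that specific property rather than just ``conservative and null''.
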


The main challenge is to show that every countable amenable group admits a {\it zero-type} conservative Bernoulli shift. Following \cite{kosloff2013}, we say that a Bernoulli shift $G\overset{\f s}{\curvearrowright}\left(\Sigma,\rho\right)$ is {\it zero-type} if
$$\lim_{g\in G}\int_{\Sigma}\sqrt{\frac{d\rho\circ\f s_{g}}{d\rho}}d\rho=0.$$ Here we use a construction of conservative Bernoulli shift due to Vaes \& Wahl \cite{vaeswahl2018} and apply the following lemma, whose proof in \cite[Lemma~4]{kosloff2013} is valid for all countable groups.

\begin{lemma}[Kosloff]
\label{Lemma: Kosloff Bernoulli shift is zero-type}
Let $G$ be a countable group and $G\overset{\f s}{\curvearrowright}\left(\Sigma,\rho\right)$ a Bernoulli shift. Suppose that $\rho=\bigotimes_{g\in G}\rho_{g}$ and the limit $\lim_{g\in G}\rho_g\left(i\right)=p\left(i\right)\neq 0$ exists for $i\in\left\{0,1\right\}$. Then either $\rho$ is mutually absolutely continuous with $\bigotimes_{g\in G}\left(p\left(0\right),p\left(1\right)\right)$, or that the Bernoulli shift $G\overset{\f s}{\curvearrowright}\left(\Sigma,\rho\right)$ is zero-type.
\end{lemma}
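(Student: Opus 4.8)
The plan is to reduce the zero-type condition to the Kakutani series \eqref{eq: Kakutani} by recognizing the defining integral as an infinite product of Hellinger affinities, and then to prove the dichotomy in its contrapositive form: assuming the shift is \emph{not} zero-type, I would deduce that $\rho$ is mutually absolutely continuous with $\bar\rho:=\bigotimes_{g\in G}(p(0),p(1))$. Note first that $p(0)+p(1)=1$ and $p(i)\neq 0$, so $\bar\rho$ is a genuine nondegenerate i.i.d. measure.

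First I would compute the relevant density. Since $\f s_g$ carries the $h$-coordinate to the $gh$-coordinate, the measure $\rho\circ\f s_{g}$ is again a product measure whose $h$-th marginal is $\rho_{g^{-1}h}$ (the precise side is immaterial, as the limit below ranges over all of $G$). Writing $H(\sigma,\tau)=\sum_{i\in\{0,1\}}\sqrt{\sigma(i)\tau(i)}$ for the Hellinger affinity of two distributions on $\{0,1\}$, the factorization of affinities over product measures (the analytic heart of Kakutani's theorem) gives
$$\int_{\Sigma}\sqrt{\frac{d\rho\circ\f s_{g}}{d\rho}}\,d\rho=\prod_{k\in G}H(\rho_{gk},\rho_{k}).$$
I would record that $H(\sigma,\tau)=1-h^{2}(\sigma,\tau)$, where $h^{2}(\sigma,\tau)=\tfrac12\sum_{i}(\sqrt{\sigma(i)}-\sqrt{\tau(i)})^{2}$ is the squared Hellinger distance, so that in this notation the Kakutani criterion \eqref{eq: Kakutani} for $\rho\sim\bar\rho$ reads $\sum_{g}h^{2}(\rho_{g},p)<\infty$; the hypothesis $\rho_{g}(i)\to p(i)$ is exactly $h^{2}(\rho_{g},p)\to0$. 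Since $\rho_g(i)\neq 0$ every factor $H(\rho_{gk},\rho_k)$ is strictly positive.

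Now suppose the shift is not zero-type. By the convention \eqref{eq: countable limits} there is $\epsilon>0$ and an infinite set of $g$'s along which $\prod_{k}H(\rho_{gk},\rho_{k})\ge\epsilon$; enumerate these as $g_{1},g_{2},\dotsc$, noting that an infinite subset of a countable group escapes every finite set, so $g_{n}k\to\infty$ for each fixed $k$. Applying $\log(1-x)\le-x$ to each factor yields the uniform bound
$$\sum_{k\in G}h^{2}(\rho_{g_{n}k},\rho_{k})\le-\log\prod_{k}H(\rho_{g_{n}k},\rho_{k})\le\log(1/\epsilon),\quad n=1,2,\dotsc.$$
It remains to convert this into summability of $h^{2}(\rho_{k},p)$, for which the triangle inequality for the Hellinger metric gives $h(\rho_{k},p)\le h(\rho_{g_{n}k},\rho_{k})+h(\rho_{g_{n}k},p)$, hence $h^{2}(\rho_{k},p)\le2h^{2}(\rho_{g_{n}k},\rho_{k})+2h^{2}(\rho_{g_{n}k},p)$.

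The main obstacle, and the one point where care is genuinely needed, is that summing this last inequality over all of $G$ is useless: after reindexing, $\sum_{k}h^{2}(\rho_{g_{n}k},p)=\sum_{k}h^{2}(\rho_{k},p)$ reintroduces the very (possibly infinite) quantity we are trying to control. The remedy is to truncate to a finite set before summing. Fixing a finite $F\Subset G$ and $\eta>0$, since $g_{n}k\to\infty$ for each of the finitely many $k\in F$ I may choose $n$ so large that $h(\rho_{g_{n}k},p)<\eta$ simultaneously for all $k\in F$; summing over $k\in F$ and inserting the displayed bound gives
$$\sum_{k\in F}h^{2}(\rho_{k},p)\le2\log(1/\epsilon)+2\left|F\right|\eta^{2}.$$
Letting $\eta\to0$ with $F$ fixed leaves $\sum_{k\in F}h^{2}(\rho_{k},p)\le2\log(1/\epsilon)$, and since $F$ was an arbitrary finite set, $\sum_{k\in G}h^{2}(\rho_{k},p)\le2\log(1/\epsilon)<\infty$. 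By Kakutani's criterion \eqref{eq: Kakutani} this means $\rho\sim\bar\rho$, which establishes the dichotomy.
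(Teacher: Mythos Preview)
The paper does not actually give a proof of this lemma; it simply records the statement and cites \cite[Lemma~4]{kosloff2013}, remarking that the argument there ``is valid for all countable groups.'' Your proof is correct and self-contained, and it follows what is essentially the expected line (and presumably the one in the cited reference): factorize $\int_\Sigma\sqrt{d(\rho\circ\f s_g)/d\rho}\,d\rho$ as the product of coordinatewise Hellinger affinities, convert the lower bound $\ge\epsilon$ along the infinite set $\{g_n\}$ into the uniform bound $\sum_k h^2(\rho_{g_nk},\rho_k)\le\log(1/\epsilon)$ via $\log(1-x)\le-x$, and then use the triangle inequality for the Hellinger metric together with the hypothesis $h(\rho_g,p)\to0$ on finite blocks to control the Kakutani series $\sum_k h^2(\rho_k,p)$. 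The truncation-to-finite-$F$ trick you flag is exactly the right way to avoid the circularity, and the passage to the supremum over $F$ is clean.
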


\begin{proof}[Proof of Proposition \ref{Proposition: Existence}]
Since $G$ is amenable, by \cite[Proposition~6.8]{vaeswahl2018} there exists a function $\Psi:G\to\left(0,1/3\right)$ such that, if we let
$$c_g=\left(\Psi\left(h\right)-\Psi\left(gh\right)\right)_{h\in G},\quad g\in G,$$
the following properties hold.
\begin{enumerate}
    \item $\lim_{g\in G}\Psi\left(g\right)=0$ and $\sum_{g\in G}\Psi\left(g\right)^{2}=\infty$;
    \item $c_g\in l^2\left(G\right)$ for all $g\in G$;
    \item $\lim_{g\in G}\no{c_{g}}_{l^{2}\left(G\right)}^{2}=\infty$ and $\sum_{g\in G}\exp\left(-45\no{c_{g}}_{l^{2}\left(G\right)}^{2}\right)=\infty.$
\end{enumerate}
Given such a function $\Psi$, define a probability product measure $\rho=\bigotimes_{g\in G}\rho_g$ on $\Sigma=\left\{0,1\right\}^{G}$ by
$$\rho_{g}\left(0\right)=1-\rho_{g}\left(1\right)={\textstyle \frac{1}{2}}+\Psi\left(g\right),\quad g\in G.$$
Note that $\rho_g\left(i\right)\geq 1/6$ for all $g\in G$ and $i\in\left\{0,1\right\}$. It follows that, for every $g\in G$,
\begin{align*}
    {\textstyle \sum_{h\in G}\sum_{i\in\left\{ 0,1\right\} }}\left(\sqrt{\rho_{h}\left(i\right)}-\sqrt{\rho_{gh}\left(i\right)}\right)^{2}
    & \leq{\textstyle \frac{3}{2} \sum_{h\in G}\sum_{i\in\left\{ 0,1\right\} }}\left(\rho_{h}\left(i\right)-\rho_{gh}\left(i\right)\right)^{2} \\
    & ={\textstyle \frac{3}{2}}\no{c_{g}}_{l^{2}\left(G\right)}^{2}<\infty,
\end{align*}
so from the Kakutani Criterion \eqref{eq: Kakutani} it follows that $\rho$ and $\rho\circ\f s_g$ are mutually absolutely continuous. Thus, the resulting Bernoulli shift is non-singular. Since $\rho_g\left(i\right)\geq 1/6$ for all $g\in G$ and $i\in\left\{0,1\right\}$, it follows from \cite[Proposition~4.1(1)]{vaeswahl2018} that since $\sum_{g\in G}\exp\left(-\kappa\no{c_{g}}_{l^{2}\left(G\right)}^{2}\right)=\infty$ with $\kappa=45$, the resulting Bernoulli shift is conservative.

We claim that this Bernoulli shift is zero-type. From the properties of $\Psi$ we see that $\lim_{g\in G}\rho_g\left(i\right)=1/2$ for $i\in\left\{0,1\right\}$ and that
\begin{align*}
{\textstyle \sum_{g\in G}\sum_{i\in\left\{ 0,1\right\} }}\left(\sqrt{\rho_{g}\left(i\right)}-\sqrt{1/2}\right)^{2}
& \geq {\textstyle \frac{1}{4} \sum_{g\in G}\sum_{i\in\left\{ 0,1\right\} }}\left(\rho_{g}\left(i\right)-1/2\right)^{2}\\
& ={\textstyle \frac{1}{2}\sum_{g\in G}}\Psi\left(g\right)^{2}=\infty.
\end{align*}
It follows from the Kakutani Criterion \eqref{eq: Kakutani} that $\rho$ is not mutually absolutely continuous with $\bigotimes_{g\in G}\left(1/2,1/2\right)$. Then from Lemma \ref{Lemma: Kosloff Bernoulli shift is zero-type} the resulting Bernoulli shift is zero-type.

Finally, given some $\alpha\in\left(0,2\right)$, let $M_{\alpha}$ be an $S\alpha S$ random measure controlled by $\rho$ and define
$$\mathbf{X}=\left(\int_{\Omega}\sqrt[\alpha]{\frac{d\rho\circ\f s_{g}}{d\rho}} dM_{\alpha}\right)_{g\in G}.$$
This is a stationary $S\alpha S$ $G$-process and the Bernoulli shift $G\overset{\f s}{\curvearrowright}\left(\Sigma,\rho\right)$ constructed above is the $G$-system underlying its Rosiński minimal spectral representation. Since this Bernoulli shift is conservative, and in view of Remark \ref{Remark: conservative part}, we see that $\mathbf{X}^{\mathcal{D}}$ vanishes. Since the underlying Bernoulli shift is zero-type, for every $\epsilon>0$,
$$\rho\left(\frac{d\rho\circ\f s_{g}}{d\rho}>\epsilon\right)\leq\epsilon^{-1/2}\int_{\Sigma}\sqrt{\frac{d\rho\circ\f s_{g}}{d\rho}}d\rho\xrightarrow[g\in G]{}0.$$
By a simple application of Theorem \ref{Theorem: strong-mixing characterization} with $f_0=1$ we see that $\mathbf{X}$ is strongly-mixing.
\end{proof}

\subsection{Infinite measure preserving constructions}

Recall that a $G$-system is {\it ergodic} if every invariant integrable function on the underlying measure space is almost surely constant. Let $\ensuremath{G}\overset{\f a}{\curvearrowright}\left(\Omega,\mu\right)$ be an infinite (σ-finite, as always) measure preserving ergodic $G$-system. Note that such a $G$-system is always null. We say that $\ensuremath{G}\overset{\f a}{\curvearrowright}\left(\Omega,\mu\right)$ is {\it rigid} if there is an infinite sequence $\left(g_n\right)_{n=1}^{\infty}$ in $G$ such that, for every Borel set $E$ in $\Omega$ with $\mu\left(E\right)<\infty$,
\begin{equation}
    \label{eq: rigid}
    \lim_{n\to\infty}\mu\left(E\cap\f a_{g_n}\left(E\right)\right)=\mu\left(E\right).
\end{equation}

Danilenko used the method of $\left(C,F\right)$-construction in order to present various infinite measure preserving systems of groups that enjoy special recurrence properties. Here we exploit the constructions in \cite{danilenko2016, danilenko2017} and, combined with Theorem \ref{Theorem: weak-mixing null} for weak-mixing and Theorem \ref{Theorem: strong-mixing characterization} for strong-mixing, we obtain the following result. Recall that the Heisenberg group $H_3\left(\mathbb{Z}\right)$ is the group of $3\times 3$ upper triangular matrices with integer entries and $1$ on the main diagonal. This is a nilpotent group of order $2$ and in particular it is amenable.

\begin{proposition}
Let $\Gamma$ be either a countable Abelian group with an element of infinite order or the Heisenberg group. Then $\Gamma$ admits, for every $\alpha\in\left(0,2\right)$, a stationary $S\alpha S$ $\Gamma$-process that is weakly-mixing but not strongly-mixing.
\end{proposition}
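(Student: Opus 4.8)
The plan is to realize such a process as a \emph{measure-preserving} spectral integral over a carefully chosen $\Gamma$-system, so that weak-mixing comes for free from nullity while the failure of strong-mixing is forced by a rigidity property of the system. Concretely, I would first invoke Danilenko's $\left(C,F\right)$-constructions from \cite{danilenko2016, danilenko2017} to produce, for each such group $\Gamma$, an infinite measure preserving ergodic $\Gamma$-system $\Gamma\overset{\f a}{\curvearrowright}\left(\Omega,\mu\right)$ that is rigid in the sense of \eqref{eq: rigid}, witnessed by some infinite sequence $\left(g_n\right)_{n=1}^{\infty}$ in $\Gamma$. Since an infinite measure preserving ergodic system carries no a.c.i.p., this system is null.

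Fix $\alpha\in\left(0,2\right)$, choose a Borel set $A\subset\Omega$ with $0<\mu\left(A\right)<\infty$, and set $f_0=1_A\in L^{\alpha}\left(\mu\right)$. Let $M_{\alpha}$ be an $S\alpha S$ random measure controlled by $\mu$ and define $\mathbf{X}$ by \eqref{eq: represeted process}; because $\f a$ preserves $\mu$ the Radon--Nikodym factors are trivial, so $\f u_g f_0=f_0\circ\f a_g$. As the underlying system is null, Theorem \ref{Theorem: weak-mixing null} immediately gives that $\mathbf{X}$ is weakly-mixing, and $\mathbf{X}\neq 0$ since $f_0\neq 0$.

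It remains to show that $\mathbf{X}$ is not strongly-mixing, for which I would apply Gross' characterization, Theorem \ref{Theorem: strong-mixing characterization}. With $f_0=1_A$ one has $\left|f_0\right|^{\alpha}=1_A$ and $\left|\f u_g f_0\right|^{\alpha}=1_{\f a_g^{-1}\left(A\right)}$; hence, for any compact $K\subset\left(0,\infty\right)$ with $1\in K$ and any $\epsilon\in\left(0,1\right)$,
$$\mu\left(\left|f_0\right|^{\alpha}\in K,\,\left|\f u_g f_0\right|^{\alpha}>\epsilon\right)=\mu\left(A\cap\f a_g^{-1}\left(A\right)\right).$$
Using that $\f a_{g_n}$ preserves $\mu$ I rewrite $\mu\left(A\cap\f a_{g_n}^{-1}\left(A\right)\right)=\mu\left(\f a_{g_n}\left(A\right)\cap A\right)$, and rigidity \eqref{eq: rigid} applied to $E=A$ gives $\mu\left(A\cap\f a_{g_n}\left(A\right)\right)\to\mu\left(A\right)>0$. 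Thus the displayed quantity does not tend to $0$ along $\left(g_n\right)$, so the set $\left\{g:\mu\left(\left|f_0\right|^{\alpha}\in K,\left|\f u_g f_0\right|^{\alpha}>\epsilon\right)>\mu\left(A\right)/2\right\}$ is infinite; by the convention \eqref{eq: countable limits} the limit in Theorem \ref{Theorem: strong-mixing characterization} fails, so $\mathbf{X}$ is not strongly-mixing.

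The routine verifications (that $1_A\in L^{\alpha}\left(\mu\right)$, that the measure-preserving specialization of \eqref{eq: represeted process} is a legitimate spectral representation, and the bookkeeping turning $\f a_g^{-1}$ into $\f a_g$) are straightforward. The substantive input, and the only real obstacle, is the existence of a rigid infinite measure preserving ergodic action for each of the two families of groups; this is exactly what the cited $\left(C,F\right)$-constructions of Danilenko supply, and I would simply quote the relevant existence statements for a countable Abelian group with an element of infinite order and for the Heisenberg group $H_3\left(\mathbb{Z}\right)$.
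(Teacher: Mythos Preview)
Your proposal is correct and follows essentially the same route as the paper: invoke Danilenko's $\left(C,F\right)$-constructions to obtain a rigid infinite measure preserving ergodic $\Gamma$-system, define $\mathbf{X}$ via \eqref{eq: represeted process}, deduce weak-mixing from nullity through Theorem \ref{Theorem: weak-mixing null}, and obstruct strong-mixing by feeding the rigidity sequence into Gross' criterion (Theorem \ref{Theorem: strong-mixing characterization}). The only cosmetic difference is that the paper takes $f_0$ bounded and supported on all of $\Omega$ (so that the displayed representation is in fact the Rosi\'nski \emph{minimal} one), whereas you take $f_0=1_A$; since neither Theorem \ref{Theorem: weak-mixing null} nor Theorem \ref{Theorem: strong-mixing characterization} requires minimality, your simpler choice is perfectly adequate.
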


\begin{proof}
From \cite[Theorem~0.1]{danilenko2016} (when $\Gamma$ is a countable Abelian group with an element of infinite order) and  \cite[Theorem~7.3]{danilenko2017} (when $\Gamma=H_3\left(\mathbb{Z}\right)$ is the Heisenberg group), there exists an essentially free, infinite measure preserving $\Gamma$-system $\ensuremath{\Gamma}\overset{\f a}{\curvearrowright}\left(\Omega,\mu\right)$ that is ergodic and rigid.

Given $\alpha\in\left(0,2\right)$, let $M_{\alpha}$ be an $S\alpha S$ random measure controlled by $\mu$, choose an arbitrary bounded real-valued $f_0\in L^{\alpha}\left(\mu\right)$ supported on all of $\Omega$ and define
$$\mathbf{X}=\left(\int_{\Omega}f_{0}\circ\f a_{\gamma}dM_{\alpha}\right)_{\gamma\in\Gamma}.$$
This is a stationary $S\alpha S$ $\Gamma$-process and $\Gamma\overset{\f a}{\curvearrowright}\left(\Omega,\mu\right)$ is the measure preserving $\Gamma$-system underlying its Rosiński minimal spectral representation.

Since $\ensuremath{\Gamma}\overset{\f a}{\curvearrowright}\left(\Omega,\mu\right)$ is null, as all infinite measure preserving ergodic systems, it follows from Theorem \ref{Theorem: weak-mixing null} that $\mathbf{X}$ is weakly-mixing. Let us show that $\mathbf{X}$ is not strongly-mixing. As $\left|f_0\right|^{\alpha}$ is bounded and integrable, there is a sufficiently small $\epsilon>0$ such that $\left|f_0\right|^{\alpha}<\epsilon^{-1}$ almost surely and the set $E:=\left\{\left|f_0\right|^{\alpha}>\epsilon\right\}$ satisfies $0<\mu\left(E\right)<\infty$. Let $\left(\gamma_n\right)_{n=1}^{\infty}$ be an infinite sequence in $\Gamma$ that is satisfying the rigidity property of $\Gamma\overset{\f a}{\curvearrowright}\left(\Omega,\mu\right)$ as in \eqref{eq: rigid}. Then
\begin{align*}
    \mu\left(\left|f_{0}\right|^{\alpha}\in\left[\epsilon,\epsilon^{-1}\right],\left|f_{0}\circ\f a_{\gamma_{n}^{-1}}\right|^{\alpha}>\epsilon\right)
    & =\mu\left(\left|f_{0}\right|^{\alpha}\geq\epsilon,\left|f_{0}\circ\f a_{\gamma_{n}^{-1}}\right|^{\alpha}>\epsilon\right) \\
    & \geq\mu\left(E\cap\f a_{\gamma_{n}}\left(E\right)\right)\xrightarrow[n\to\infty]{}\mu\left(E\right)>0.
\end{align*}
Thus, from Theorem \ref{Theorem: strong-mixing characterization} it follows that $\mathbf{X}$ is not strongly-mixing.
\end{proof}

\begin{appendix}

\section*{The Dye-Douglass criterion}
\label{Appendix: Dye-Douglass Criterion}

The proof Douglass gave in \cite[Theorem~4.1]{douglass} to Theorem \ref{Theorem: Dye-Douglass Criterion} made an essential use of a non-trivial statement by Dye in \cite[\textsection 1]{dye} that we could not locate a proof of it in the literature. Here we introduce the Dye-Douglass Criterion with a proof of Dye's statement.

We start with some notations. Let $G$ be a countable amenable group. Denote by $\mathcal{P}\left(G\right)$ the set of finitely supported probability measures on $G$, viewed as a convex subset of $l^{1}\left(G\right)\subset l^{\infty}\left(G\right)$, so we can freely write $\lambda_{g}p$ and $\rho_{g}p$ for $p\in\mathcal{P}\left(G\right)$. For $F\Subset G$ define $p_{F}\in\mathcal{P}\left(G\right)$ by
$$p_{F}\left(g\right)={\textstyle \frac{1}{\left|F\right|}}1_{F}\left(g\right),\quad g\in G.$$
For $k\in G$ denote $G_{k}=\left\{ \left(g,h\right)\in G\times G:gh=k\right\}$. For $p,q\in\mathcal{P}\left(G\right)$ define their convolution $p\ast q\in\mathcal{P}\left(G\right)$ by
$$p\ast q\left(k\right)=\sum_{\left(g,h\right)\in G_k}p\left(g\right)q\left(h\right),\quad g\in G.$$
For $\psi\in l^{\infty}\left(G\right)$ and $p\in\mathcal{P}\left(G\right)$ write
$$E_{p}\left(\psi\right)=\sum_{g\in G}p\left(g\right)\varphi\left(g\right).$$
Note the formula
$$E_{p\ast q}\left(\psi\right)=\sum_{k\in G}\sum_{\left(g,h\right)\in G_k}p\left(g\right)q\left(h\right)\psi\left(gh\right)=\sum_{g,h\in G}p\left(g\right)q\left(h\right)\psi\left(gh\right).$$

\begin{definition}[Dye \cite{dye}]
For a real-valued function $\psi\in l^{\infty}\left(G\right)$ let
$$\lambda^{-}\left(\psi\right)=\sup_{p\in\mathcal{P}\left(G\right)}\inf_{q\in\mathcal{P}\left(G\right)}E_{q\ast p}\left(\psi\right)=\sup_{p\in\mathcal{P}\left(G\right)}\inf_{h\in G}E_{p}\left(\lambda_{h}\psi\right);$$
$$\lambda^{+}\left(\psi\right)=\inf_{p\in\mathcal{P}\left(G\right)}\sup_{q\in\mathcal{P}\left(G\right)}E_{q\ast p}\left(\psi\right)=\inf_{p\in\mathcal{P}\left(G\right)}\sup_{h\in G}E_{p}\left(\lambda_{h}\psi\right);$$
$$\rho^{-}\left(\psi\right)=\sup_{p\in\mathcal{P}\left(G\right)}\inf_{q\in\mathcal{P}\left(G\right)}E_{p\ast q}\left(\psi\right)=\sup_{p\in\mathcal{P}\left(G\right)}\inf_{h\in G}E_{p}\left(\rho_{h}\psi\right);$$
$$\rho^{+}\left(\psi\right)=\inf_{p\in\mathcal{P}\left(G\right)}\sup_{q\in\mathcal{P}\left(G\right)}E_{p\ast q}\left(\psi\right)=\inf_{p\in\mathcal{P}\left(G\right)}\sup_{h\in G}E_{p}\left(\rho_{h}\psi\right).$$
\end{definition}

Note that always $\lambda^{-}\left(\psi\right)\leq\lambda^{+}\left(\psi\right)$ and $\rho^{-}\left(\psi\right)\leq\rho^{+}\left(\psi\right)$. Each of the second equalities appeared in the above definition are due to an observation of Douglass. In order to see why this is true let us explain, for instance, why
$$\sup_{q\in\mathcal{P}\left(G\right)}E_{q\ast p}\left(\psi\right)=\sup_{h\in G}E_{p}\left(\lambda_{h}\psi\right)$$
for every $p\in\mathcal{P}\left(G\right)$. Indeed, one inequality follows by replacing the supremum over $\mathcal{P}\left(G\right)$ by the supremum over the Dirac measures on $G$. For the converse inequality note that for every $q\in\mathcal{P}\left(G\right)$ we have
$$E_{q\ast p}\left(\psi\right)={\textstyle \sum_{h\in G}}q\left(h\right)E_{p}\left(\lambda_{h}\psi\right)\leq\sup_{h\in G}E_{p}\left(\lambda_{h}\psi\right).$$

Recalling Definition \ref{Definition: Almost Convergence} of almost convergence and universal mean and Definition \ref{Definition: Folner Convergence} of Følner convergence and Følner mean, we formulate the Dye-Douglass Criterion as follows.

\begin{theorem}[The Dye-Douglass Criterion]
Let $G$ be a countable amenable group. For a real-valued function $\psi\in l^{\infty}\left(G\right)$ the following are equivalent.
\begin{enumerate}
\item $\lambda^{-}\left(\psi\right)=\lambda^{+}\left(\psi\right)$;
\item $\rho^{-}\left(\psi\right)=\rho^{+}\left(\psi\right)$;
\item $\psi$ is Følner convergent;
\item $\psi$ is almost convergent.
\end{enumerate}
In case that $\psi$ satisfies these properties, its universal mean, its Følner mean, the common value of $\lambda^{-}\left(\psi\right)$ and $\lambda^{+}\left(\psi\right)$ as well as the common value of $\rho^{-}\left(\psi\right)$ and $\rho^{+}\left(\psi\right)$ all coincide.
\end{theorem}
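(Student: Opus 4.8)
The plan is to route all four conditions through the single bridge of invariant means, isolating the genuinely non-commutative content in exactly one implication. The first thing I would record are the elementary bounds $\lambda^-(\psi)\leq\lambda^+(\psi)$, $\rho^-(\psi)\leq\rho^+(\psi)$ together with the ``crossed'' bounds $\rho^-(\psi)\leq\lambda^+(\psi)$ and $\lambda^-(\psi)\leq\rho^+(\psi)$. I would obtain the crossed bounds not from a minimax theorem — which is unavailable here, since the pairing $(p,q)\mapsto E_{p\ast q}(\psi)$ is not jointly weak-$*$ continuous, and a naive minimax would falsely collapse all four quantities — but from the mean identities below applied to any two-sided invariant mean, whose existence is cited in the body. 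Concretely, in Step~1 I would show that a bounded real $\psi$ receives a single value from all left-invariant means precisely when the uniform-closed convex hull $K^{\lambda}$ of the left translates $\{\lambda_g\psi\}$ contains a constant, equal to that value. The forward direction is immediate because a left-invariant mean is constant on $K^{\lambda}$; the reverse uses a Hahn--Banach attainment argument to realise the extreme means, plus a convolution step (convolving near-optimizers of the two one-sided expressions) to manufacture a single combination of arbitrarily small oscillation. Unwinding the second forms in Dye's definition, the oscillation of a right combination $\sum_h q(h)\rho_h\psi$ is governed by $\lambda^+-\lambda^-$, so that $K^{\rho}$ contains a constant iff (1) holds; symmetrically $K^{\lambda}$ contains a constant iff (2) holds. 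Here one must track the left/right bookkeeping, namely that left-invariant means realise $[\rho^-(\psi),\rho^+(\psi)]$ while right-invariant means realise $[\lambda^-(\psi),\lambda^+(\psi)]$. Consequently (4) — all left and all right means agreeing at one value — is equivalent to (1) and (2) together, the matching of the two constants being automatic from any two-sided mean.

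In Step~2 I would tie in Følner convergence. For a two-sided Følner sequence $(F_N)$ I would establish $\lambda^+(\psi)=\lim_N\sup_g E_{gF_N}(\psi)$ and $\lambda^-(\psi)=\lim_N\inf_g E_{gF_N}(\psi)$, with the two analogues over the right translates $F_N h$. The inequalities ``$\leq/\geq$'' are got by testing $p=p_{F_N}$ in the relevant infima and suprema, and the asymptotic optimality of Følner sets is precisely Douglass's argument using the Følner property. Reading off the definition of Følner convergence, (3) then amounts to $\lambda^-=\lambda^+$ and $\rho^-=\rho^+$ sharing a common value, i.e. to (1) and (2). At this stage (2), (3), (4) and the conjunction of (1) and (2) are all equivalent, each of them implies (1), and the single missing link is the implication (1)$\Rightarrow$(2).

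That implication — equivalently, that a constant lying in $K^{\rho}$ must also lie in $K^{\lambda}$ — is Dye's statement, and I expect it to be the main obstacle, being the one irreducibly non-commutative point. I would begin from the remark that $(F_N^{-1})$ is again a two-sided Følner sequence, so that (1) forces the right combinations $\sum_h p_{F_N}(h)\rho_h\psi$ to converge uniformly to $c=\lambda^\pm(\psi)$, placing $c$ in $K^{\rho}$; the real task is to transfer this to $K^{\lambda}$, i.e. to deduce uniform convergence of the right-translate averages $E_{F_N h}(\psi)$ to $c$, which by Step~2 gives $\rho^-=\rho^+=c$ and hence (2). The difficulty is exactly that a one-sided Følner tiling of a right translate $F_N h$ — which is uniformly left-Følner, the left-invariance being preserved under right translation — reproduces right translates of the base set and so loops back to $\rho$-averages rather than to the left-translate averages that (1) controls. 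Following Dye, I would overcome this by a two-step symmetrization against an approximately two-sided-invariant Følner set, which converts the uniform control of left-translate averages furnished by (1) into the required uniform control of the right-translate averages. The reverse implication (2)$\Rightarrow$(1) then needs no further work: it is the same statement for the opposite group $G^{\mathrm{op}}$, under which the roles of $\lambda$ and $\rho$ are simply interchanged.

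Assembling Steps~1--3 yields the full equivalence of (1)--(4). For the concluding clause on coincidence of constants, I would note that any two-sided invariant mean $\mathbf{m}_0$ satisfies $\mathbf{m}_0(\psi)\in[\rho^-,\rho^+]\cap[\lambda^-,\lambda^+]$, so in the convergent case the universal mean, the Følner mean, the common value of $\lambda^{\pm}(\psi)$ and the common value of $\rho^{\pm}(\psi)$ all equal $\mathbf{m}_0(\psi)$, and therefore all coincide.
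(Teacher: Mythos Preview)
Your Steps 1 and 2 are structurally sound and cover the same ground as the paper's equivalences $(3)\Leftrightarrow(4)\Leftrightarrow(1)\wedge(2)$; your Hahn--Banach/convolution route to realising the extreme means plays the role of the paper's explicit construction of invariant means. The divergence, and the genuine gap, is your Step 3.

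You correctly isolate $(1)\Rightarrow(2)$ as the irreducibly non-commutative crux and propose to handle it ``following Dye'' via ``a two-step symmetrization against an approximately two-sided-invariant Følner set.'' But this is a label, not an argument: you supply no mechanism for converting uniform control of the averages $E_{gF_N}(\psi)$ into uniform control of the averages $E_{F_Nh}(\psi)$, and you yourself call this ``the main obstacle.'' The entire reason the paper's appendix exists is that the author could not locate a proof of Dye's statement in the literature; invoking Dye by name without supplying the mechanism leaves your proof incomplete at exactly the step that matters.

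The paper avoids attacking $(1)\Rightarrow(2)$ head-on. Its cycle is $(1)\lor(2)\Rightarrow(3)\Rightarrow(4)\Rightarrow(1)\land(2)$, with Douglass's lemmas supplying the first arrow and a short uniform-convergence argument the second. The paper's replacement for Dye's missing proof is the contrapositive of $(4)\Rightarrow(1)$: assuming $\lambda^-(\psi)<\lambda^+(\psi)$, one chooses for each $N$ elements $h_N^{\pm}\in G$ nearly attaining the two extremes of $h\mapsto E_{p_{F_N}}(\lambda_h\psi)$ along a two-sided F\o lner sequence, and sets
\[
\mathbf{m}^{\pm}(b)=\mathcal{L}\Bigl(\bigl(E_{p_{F_N}}(\lambda_{h_N^{\pm}}b)\bigr)_{N\geq 1}\Bigr)
\]
for a fixed Banach limit $\mathcal{L}$ on $l^\infty(\mathbb{N})$. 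The two-sided F\o lner property forces each $\mathbf{m}^{\pm}$ to be \emph{right}-invariant, and by construction $\mathbf{m}^-(\psi)<\mathbf{m}^+(\psi)$, contradicting almost convergence. With this concrete Banach-limit construction in hand, $(1)\Rightarrow(2)$ drops out of the cycle rather than requiring an independent symmetrization argument.
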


\begin{proof}
$\left(1\right)\vee\left(2\right)\implies\left(3\right)$. This was proved by Douglass. We shortly mention how $\left(1\right)\implies\left(3\right)$ and the implication $\left(2\right)\implies\left(3\right)$ is similar. By \cite[Lemma~3.1]{douglass}, if $\left(F_{N}\right)_{N=1}^{\infty}$ is a left-Følner sequence for $G$, then for every $p\in\mathcal{P}\left(G\right)$,
$$\lim_{N\to\infty}\no{p\ast p_{F_{N}}-p_{F_{N}}}_{l^{1}\left(G\right)}=0.$$
When $\left(F_{N}\right)_{N=1}^{\infty}$ is a two-sided Følner sequence then by \cite[Lemma~4.1]{douglass}, for every real-valued $\psi\in l^{\infty}\left(G\right),$
$$\lambda^{-}\left(\psi\right)\leq\liminf_{N\to\infty}\inf_{h\in G}E_{p_{F_{N}}}\left(\lambda_{h}\psi\right)\leq\limsup_{N\to\infty}\sup_{h\in G}E_{p_{F_{N}}}\left(\lambda_{h}\psi\right)\leq\lambda^{+}\left(\psi\right).$$
It is now clear that $\lambda^{-}\left(\psi\right)=\lambda^{+}\left(\psi\right)$ implies that $\psi$ is Følner convergent.

$\left(3\right)\implies\left(4\right)$. Let $\mathbf{m}$ be a left-invariant mean on $G$ and fix some two-sided Følner sequence $\left(F_{N}\right)_{N=1}^{\infty}$ for $G$. For every $N$ let $\psi_{N}:G\to\mathbb{R}$ be the function
$$\psi_{N}\left(h\right)=E_{p_{F_{N}}}\left(\rho_{h^{-1}}\psi\right)={\textstyle \frac{1}{\left|F_{N}\right|}\sum_{g\in F_{N}}}\rho_{h^{-1}}\psi\left(g\right)={\textstyle \frac{1}{\left|F_{N}\right|}\sum_{g\in F_{N}}}\lambda_{g}\psi\left(h\right).$$
Thus, $\psi_{N}$ is a convex combination of left-translations of $\psi$. As $\mathbf{m}$ is left-invariant, $\mathbf{m}\left(\psi_{N}\right)=\mathbf{m}\left(\psi\right)$ for every $N$. Since $\mathbf{m}$ has norm $1$ as a linear functional on $l^{\infty}\left(G\right)$ we obtain that
$$\left|\mathbf{m}\left(\psi\right)-E_{G}^{\text{Følner}}\left(\psi\right)\right|=\left|\mathbf{m}\left(\psi_{N}-E_{G}^{\text{Følner}}\left(\psi\right)\right)\right|\leq\no{\psi_{N}-E_{G}^{\text{Følner}}\left(\psi\right)}_{l^{\infty}\left(G\right)}.$$
However, the assumption that $\psi$ is Følner convergent means that $\left(\psi_{N}\right)_{N=1}^{\infty}$ converges to $E_{G}^{\text{Følner}}\left(\psi\right)$ uniformly, hence $\mathbf{m}\left(\psi\right)=E_{G}^{\text{Følner}}\left(\psi\right)$. A similar argument shows that $\mathbf{m}\left(\psi\right)=E_{G}^{\text{Følner}}\left(\psi\right)$ also for every right-invariant mean $\mathbf{m}$.

$\left(4\right)\implies\left(1\wedge 2\right)$. We show that $\left(4\right)\implies\left(1\right)$ and the implication $\left(4\right)\implies\left(2\right)$ is similar. Suppose to the contrary that $\lambda^{-}\left(\psi\right)<\lambda^{+}\left(\psi\right)$. Thus, for every $p\in\mathcal{P}\left(G\right)$ we have
\begin{equation}
    \label{eq: L-<L+}
    \inf_{h\in G}E_{p}\left(\lambda_{h}\psi\right)\leq\lambda^{-}\left(\psi\right)<\lambda^{+}\left(\psi\right)\leq\sup_{h\in G}E_{p}\left(\lambda_{h}\psi\right).
\end{equation}
Put $\epsilon=\left(\lambda^{+}\left(\psi\right)-\lambda^{-}\left(\psi\right)\right)/3$. Let $\left(F_{N}\right)_{N=1}^{\infty}$ be an arbitrary two-sided Følner sequence for $G$. For every $N$, considering the element $p_{F_{N}}\in\mathcal{P}\left(G\right)$, in view of \eqref{eq: L-<L+} we see that there can be found group elements $h_{N}^{-}$ and $h_{N}^{+}$ such that
\begin{equation}
    \label{eq:L-L+}
    E_{p_{F_{N}}}\left(\lambda_{h_{N}^{-}}\psi\right)<\lambda^{-}\left(\psi\right)+\epsilon<\lambda^{+}\left(\psi\right)-\epsilon<E_{p_{F_{N}}}\left(\lambda_{h_{N}^{+}}\psi\right).
\end{equation}
Let $\mathcal{L}$ be some Banach limit on $l^{\infty}\left(\mathbb{N}\right)$, the Banach space of bounded sequences indexed by the positive integers. That is to say, $\mathcal{L}$ is a shift-invariant positive linear functional on $l^{\infty}\left(\mathbb{N}\right)$ that, on the subspace of convergent sequences, coincides with the usual limit functional. Such a linear functional satisfies, for every real-valued sequence $\left(x_{N}\right)_{N=1}^{\infty}\in l^{\infty}\left(\mathbb{N}\right)$,
$$\liminf_{N\to\infty}x_{N}\leq\mathcal{L}\left(\left(x_{N}\right)_{N=1}^{\infty}\right)\leq\limsup_{N\to\infty}x_{N}.$$
For the existence and the properties of Banach limits see for instance \cite[\textsection 3.4]{krengel1985}. Having $\mathcal{L}$ in hands, for $s\in\left\{ +,-\right\} $ define a mean $\mathbf{m}^{s}$ on $G$ by
$$\mathbf{m}^{s}\left(b\right)=\mathcal{L}\left(\left(E_{p_{F_{N}}}\left(\lambda_{h_{N}^{s}}b\right)\right)_{N=1}^{\infty}\right),\quad b\in l^{\infty}\left(G\right).$$
From the properties of $\mathcal{L}$ it is easy to see that $\mathbf{m}^{s}$ is a mean. We claim that $\mathbf{m}^{s}$ is right-invariant. Indeed, given $b\in l^{\infty}\left(G\right)$ and $h\in G$, for every $N$ we have
\begin{align*}
    E_{p_{F_{N}}}\left(\lambda_{h_{N}^{s}}\rho_{h}b\right)	
    & = {\textstyle \frac{1}{\left|F_{N}\right|}\sum_{g\in F_{N}h}}\lambda_{h_{N}^{s}}b\left(g\right) \\
    & = E_{p_{F_{N}}}\left(\lambda_{h_{N}^{s}}b\right)+{\textstyle \frac{1}{\left|F_{N}\right|}\sum_{g\in F_{N}h\backslash F_{N}}}\lambda_{h_{N}^{s}}b\left(g\right)-{\textstyle \frac{1}{\left|F_{N}\right|}\sum_{g\in F_{N}\backslash F_{N}h}}\lambda_{h_{N}^{s}}b\left(g\right).
\end{align*}
Since $\left(F_{N}\right)_{N=1}^{\infty}$ is two-sided and $b\in l^{\infty}\left(G\right)$, from the Følner property the second and third terms vanish as $N\to\infty$. As $\mathcal{L}$ assigns zero to sequences converging to zero, we conclude that $\mathbf{m}^{s}\left(\rho_{h}b\right)=\mathbf{m}^{s}\left(b\right)$. Finally, in view of \eqref{eq:L-L+} and the properties of $\mathcal{L}$ we deduce that
\begin{equation}
    \mathbf{m}^{-}\left(\psi\right)\leq\lambda^{-}\left(\psi\right)+\epsilon<\lambda^{+}\left(\psi\right)-\epsilon\leq\mathbf{m}^{+}\left(\psi\right)\nonumber
\end{equation}
showing that $\psi$ is not almost convergent.
\end{proof}

\section*{A lemma on almost-sure convergence}
\label{Appendix: Lemma on Almost-Sure}

The following auxiliary lemma can be deduced from the considerations appear in the proof of \cite[Theorem~1.4.4]{aaronson}. We introduce the proof here for completeness.

\begin{lemma}
Let $\left(\Omega,\mu\right)$ be a standard probability space and $\left(f_{k}\right)_{k=1}^{\infty}\subset L^{1}\left(\mu\right)$ a bounded sequence of real-valued non-negative functions that is Cesàro convergent to $0$ almost surely, namely
$$\frac{1}{K}\sum_{k=1}^{K}f_{k}\xrightarrow[K\to\infty]{}0\text{ almost surely}.$$
Then $\left(f_{k}\right)_{k=1}^{\infty}$ has a subsequence convergent to $0$ almost surely.
\end{lemma}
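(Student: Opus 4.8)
The plan is to upgrade the elementary pointwise observation --- that for $\mu$-a.e.\ $\omega$ the non-negative sequence $\left(f_{k}\left(\omega\right)\right)_{k}$ has Cesàro mean tending to $0$ and therefore satisfies $\liminf_{k}f_{k}\left(\omega\right)=0$ --- into a single subsequence of indices that works simultaneously for almost every $\omega$. The one genuine difficulty is precisely this uniformization in $\omega$: a priori the good indices depend on the point. I would resolve it by combining Egorov's theorem with a summable (Borel--Cantelli type) diagonal selection. Write $g_{K}=\frac{1}{K}\sum_{k=1}^{K}f_{k}\geq0$, so that $g_{K}\to0$ almost surely by hypothesis.

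First I would fix, via Egorov's theorem on the finite measure space $\left(\Omega,\mu\right)$, an increasing sequence of measurable sets $A_{1}\subseteq A_{2}\subseteq\dotsm$ with $\mu\left(\Omega\setminus A_{m}\right)<2^{-m}$ on each of which $g_{K}\to0$ uniformly as $K\to\infty$; taking finite unions of the sets produced by Egorov makes them nested while preserving uniform convergence, and $\mu\left(\bigcup_{m}A_{m}\right)=1$. It therefore suffices to produce one subsequence $f_{k_{j}}$ with $f_{k_{j}}\to0$ almost surely on each $A_{m}$, since their union is conull. The key quantitative input is that integrating the uniform bound gives, for every fixed $m$,
$$\frac{1}{K}\sum_{k=1}^{K}\int_{A_{m}}f_{k}\,d\mu=\int_{A_{m}}g_{K}\,d\mu\leq\mu\left(A_{m}\right)\sup_{\omega\in A_{m}}g_{K}\left(\omega\right)\xrightarrow[K\to\infty]{}0.$$
Hence the non-negative numbers $a_{k}^{\left(m\right)}:=\int_{A_{m}}f_{k}\,d\mu$ have vanishing Cesàro mean, which forces $\liminf_{k}a_{k}^{\left(m\right)}=0$, so arbitrarily small values of $a_{k}^{\left(m\right)}$ occur for infinitely many $k$.

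I would then select indices recursively: given $k_{1}<\dotsm<k_{j-1}$, use $\liminf_{k}a_{k}^{\left(j\right)}=0$ to choose $k_{j}>k_{j-1}$ with $a_{k_{j}}^{\left(j\right)}=\int_{A_{j}}f_{k_{j}}\,d\mu<2^{-j}$. The monotonicity $A_{m}\subseteq A_{j}$ for $j\geq m$ now pays off: for fixed $m$ one has $\int_{A_{m}}f_{k_{j}}\,d\mu\leq a_{k_{j}}^{\left(j\right)}<2^{-j}$ for all $j\geq m$, so $\sum_{j\geq m}\int_{A_{m}}f_{k_{j}}\,d\mu<\infty$. By Tonelli's theorem $\sum_{j\geq m}f_{k_{j}}<\infty$ almost surely on $A_{m}$, and thus $f_{k_{j}}\to0$ almost surely on $A_{m}$. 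Letting $m$ range and using $\mu\left(\bigcup_{m}A_{m}\right)=1$ yields $f_{k_{j}}\to0$ almost surely on $\Omega$, completing the argument. I expect the Egorov step and the choice of the summable bound $2^{-j}$ along the \emph{diagonal} family $\left(A_{j}\right)$ to be the crux; once the selection is arranged so that each fixed $A_{m}$ sees a summable tail, the passage from convergence in mean on $A_{m}$ to almost sure convergence is routine.
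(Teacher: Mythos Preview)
Your argument is correct and follows essentially the same route as the paper's proof: apply Egorov to the Ces\`aro averages, use the resulting $L^{1}$-convergence on the Egorov sets to select indices with summable integrals, and then pass to almost-sure convergence by a Borel--Cantelli type argument. The only cosmetic differences are that you nest the Egorov sets and close with Tonelli on each $A_{m}$, whereas the paper leaves the sets un-nested and finishes with Markov's inequality plus Borel--Cantelli on the whole space; your recursive choice $k_{j}>k_{j-1}$ also sidesteps the small adjustment the paper needs to ensure the selected indices are increasing.
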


\begin{proof}
For every positive integer $m$ define $b_{m}\in\left\{ f_{k}:k\in\mathbb{N}\right\}$ as follows. Using Egorov's Theorem choose a Borel set $E_{m}$ in $\Omega$ with $\mu\left(\Omega\backslash E_{m}\right)<2^{-m}$ such that on $E_{m}$, the almost sure Cesàro convergence of $\left(f_{k}\right)_{k=1}^{\infty}$ to $0$ is uniformly in $k$. In particular, $\left(f_{k}\right)_{k=1}^{\infty}$ is Cesàro convergent to $0$ in $L^{1}\left(\mu\right)$ on $E_{m}$, so there is a sequence $\left(K\left(m,r\right)\right)_{r=1}^{\infty}$ of integers such that
$${\textstyle \frac{1}{K\left(m,r\right)}\sum_{k=1}^{K\left(m,r\right)}}\int_{E_{m}}f_{k}d\mu<2^{-r},\quad r=1,2,\dotsc.$$
Then we can extract a sequence $\left(k\left(m,r\right)\right)_{r=1}^{\infty}$ of integers, $1\leq k\left(m,r\right)\leq K\left(m,r\right)$, such that
$$\int_{E_{m}}f_{k\left(m,r\right)}d\mu<2^{-r},\quad r=1,2,\dotsc.$$
Then define
$$b_{m}:=f_{k\left(m,m\right)}.$$
Doing this for every $m$, after a simple adjustment we can make $\left(b_{m}\right)_{m=1}^{\infty}$ a subsequence of $\left(f_{k}\right)_{k=1}^{\infty}$. We show that it is convergent to $0$ almost surely. Let $\epsilon>0$ be arbitrary. Using Markov's inequality, for every $m$ we have
\begin{align*}
    \mu\left(b_{m}\geq\epsilon\right)
    & \leq \mu\left(\Omega\backslash E_{m}\right)+\mu\left(E_{m}\cap\left\{ b_{m}\geq\epsilon\right\} \right) \\
    & < 2^{-m}+\epsilon^{-1}\int_{E_{m}}b_{m}d\mu<\left(1+\epsilon^{-1}\right)2^{-m}.
\end{align*}
From the Borel-Cantelli lemma we conclude that $b_{m}\xrightarrow[m\to\infty]{}0$ almost surely.
\end{proof}

\end{appendix}

\begin{acks}[Acknowledgments]
I am in debt to my advisor, Zemer Kosloff, for introducing me to P. Roy's work \cite{roy2020}, for many helpful discussions and for his dedication and constant support throughout this research. I thank Amichai Lampert and Hagai Lavner for a few helpful discussions about the {\it Følner convergence} property. I also thank the referee for their time and their useful comments. A special thanks goes to Mahan Mj, Parthanil Roy and Sourav Sarkar who, after this work was first published, kindly let me know that they are working on similar problems. The research was partially supported by ISF grant No. 1180/22.
\end{acks}

\bibliographystyle{imsart-number}
\bibliography{References.bib}

\begin{thebibliography}{62}

\bibitem{aaronson}
\begin{bbook}[author]
\bauthor{\bsnm{Aaronson},~\bfnm{Jon}\binits{J.}}
(\byear{1997}).
\btitle{An introduction to infinite ergodic theory}
\bvolume{50}.
\bpublisher{American Mathematical Soc.}
\end{bbook}
\endbibitem

\bibitem{bekka2020}
\begin{bbook}[author]
\bauthor{\bsnm{Bekka},~\bfnm{Bachir}\binits{B.}} \AND \bauthor{\bparticle{de}
  \bsnm{La~Harpe},~\bfnm{Pierre}\binits{P.}}
(\byear{2020}).
\btitle{Unitary representations of groups, duals, and characters}
\bvolume{250}.
\bpublisher{American Mathematical Soc.}
\end{bbook}
\endbibitem

\bibitem{bekka2008}
\begin{bbook}[author]
\bauthor{\bsnm{Bekka},~\bfnm{Bachir}\binits{B.}}, \bauthor{\bparticle{de}
  \bsnm{La~Harpe},~\bfnm{Pierre}\binits{P.}} \AND
  \bauthor{\bsnm{Valette},~\bfnm{Alain}\binits{A.}}
(\byear{2008}).
\btitle{Kazhdan's property (T)}.
\bpublisher{Cambridge university press}.
\end{bbook}
\endbibitem

\bibitem{berglund}
\begin{bbook}[author]
\bauthor{\bsnm{Berglund},~\bfnm{John~F}\binits{J.~F.}} \AND
  \bauthor{\bsnm{Hofmann},~\bfnm{Karl~Heinrich}\binits{K.~H.}}
(\byear{2006}).
\btitle{Compact semitopological semigroups and weakly almost periodic
  functions}
\bvolume{42}.
\bpublisher{Springer}.
\end{bbook}
\endbibitem

\bibitem{bhattroy}
\begin{barticle}[author]
\bauthor{\bsnm{Bhattacharya},~\bfnm{Ayan}\binits{A.}} \AND
  \bauthor{\bsnm{Roy},~\bfnm{Parthanil}\binits{P.}}
(\byear{2018}).
\btitle{A large sample test for the length of memory of stationary symmetric
  stable random fields via nonsingular Zd-actions}.
\bjournal{Journal of Applied Probability}
\bvolume{55}
\bpages{179--195}.
\end{barticle}
\endbibitem

\bibitem{bjorklundkosloff2018}
\begin{barticle}[author]
\bauthor{\bsnm{Bjorklund},~\bfnm{Michael}\binits{M.}} \AND
  \bauthor{\bsnm{Kosloff},~\bfnm{Zemer}\binits{Z.}}
(\byear{2018}).
\btitle{Bernoulli actions of amenable groups with weakly mixing Maharam
  extensions}.
\bjournal{arXiv preprint arXiv:1808.05991}.
\end{barticle}
\endbibitem

\bibitem{bjorklundkosloffvaes2021}
\begin{barticle}[author]
\bauthor{\bsnm{Bj{\"o}rklund},~\bfnm{Michael}\binits{M.}},
  \bauthor{\bsnm{Kosloff},~\bfnm{Zemer}\binits{Z.}} \AND
  \bauthor{\bsnm{Vaes},~\bfnm{Stefaan}\binits{S.}}
(\byear{2021}).
\btitle{Ergodicity and type of nonsingular Bernoulli actions}.
\bjournal{Inventiones mathematicae}
\bvolume{224}
\bpages{573--625}.
\end{barticle}
\endbibitem

\bibitem{burckel}
\begin{bbook}[author]
\bauthor{\bsnm{Burckel},~\bfnm{Robert~B}\binits{R.~B.}}
(\byear{1970}).
\btitle{Weakly almost periodic functions on semigroups}.
\bpublisher{Gordon \& Breach Science Pub}.
\end{bbook}
\endbibitem

\bibitem{CaHaWe}
\begin{barticle}[author]
\bauthor{\bsnm{Cambanis},~\bfnm{Stamatis}\binits{S.}},
  \bauthor{\bsnm{Hardin~Jr},~\bfnm{Clyde~D}\binits{C.~D.}} \AND
  \bauthor{\bsnm{Weron},~\bfnm{Aleksander}\binits{A.}}
(\byear{1987}).
\btitle{Ergodic properties of stationary stable processes}.
\bjournal{Stochastic Processes and their Applications}
\bvolume{24}
\bpages{1--18}.
\end{barticle}
\endbibitem

\bibitem{danilenko2016}
\begin{barticle}[author]
\bauthor{\bsnm{Danilenko},~\bfnm{Alexandre}\binits{A.}}
(\byear{2016}).
\btitle{Finite ergodic index and asymmetry for infinite measure preserving
  actions}.
\bjournal{Proceedings of the American Mathematical Society}
\bvolume{144}
\bpages{2521--2532}.
\end{barticle}
\endbibitem

\bibitem{danilenko2017}
\begin{barticle}[author]
\bauthor{\bsnm{Danilenko},~\bfnm{Alexandre~I}\binits{A.~I.}}
(\byear{2017}).
\btitle{Directional recurrence and directional rigidity for infinite measure
  preserving actions of nilpotent lattices}.
\bjournal{Ergodic Theory and Dynamical Systems}
\bvolume{37}
\bpages{1841--1861}.
\end{barticle}
\endbibitem

\bibitem{danilenko2019}
\begin{barticle}[author]
\bauthor{\bsnm{Danilenko},~\bfnm{Alexandre~I}\binits{A.~I.}} \AND
  \bauthor{\bsnm{Lema{\'n}czyk},~\bfnm{Mariusz}\binits{M.}}
(\byear{2019}).
\btitle{K-property for Maharam extensions of non-singular Bernoulli and Markov
  shifts}.
\bjournal{Ergodic Theory and Dynamical Systems}
\bvolume{39}
\bpages{3292--3321}.
\end{barticle}
\endbibitem

\bibitem{day}
\begin{barticle}[author]
\bauthor{\bsnm{Day},~\bfnm{Mahlon~M}\binits{M.~M.}}
(\byear{1957}).
\btitle{Amenable semigroups}.
\bjournal{Illinois Journal of Mathematics}
\bvolume{1}
\bpages{509--544}.
\end{barticle}
\endbibitem

\bibitem{douglass}
\begin{barticle}[author]
\bauthor{\bsnm{Douglass},~\bfnm{Steven~A}\binits{S.~A.}}
(\byear{1968}).
\btitle{On a concept of summability in amenable semigroups}.
\bjournal{Mathematica Scandinavica}
\bvolume{23}
\bpages{96--102}.
\end{barticle}
\endbibitem

\bibitem{dye}
\begin{barticle}[author]
\bauthor{\bsnm{Dye},~\bfnm{HA}\binits{H.}}
(\byear{1965}).
\btitle{On the ergodic mixing theorem}.
\bjournal{Transactions of the American Mathematical Society}
\bvolume{118}
\bpages{123--130}.
\end{barticle}
\endbibitem

\bibitem{grabarnikhrushovski1995}
\begin{barticle}[author]
\bauthor{\bsnm{Grabarnik},~\bfnm{G}\binits{G.}} \AND
  \bauthor{\bsnm{Hrushovski},~\bfnm{E}\binits{E.}}
(\byear{1995}).
\btitle{Singular compactness and the Neveu decomposition}.
\bjournal{Israel Journal of Mathematics}
\bvolume{89}
\bpages{135--139}.
\end{barticle}
\endbibitem

\bibitem{greenleaf}
\begin{bincollection}[author]
\bauthor{\bsnm{Greenleaf},~\bfnm{Frederick}\binits{F.}}
(\byear{1969}).
\btitle{Invariant means on topological groups and their applications}.
In \bbooktitle{Van Nostrand Mathematical Studies Series, No. 16}
\bpublisher{Van Nostrand Reinhold Company}.
\end{bincollection}
\endbibitem

\bibitem{gross}
\begin{barticle}[author]
\bauthor{\bsnm{Gross},~\bfnm{Aaron}\binits{A.}}
(\byear{1994}).
\btitle{Some mixing conditions for stationary symmetric stable stochastic
  processes}.
\bjournal{Stochastic Processes and their Applications}
\bvolume{51}
\bpages{277--295}.
\end{barticle}
\endbibitem

\bibitem{hajiankakutani}
\begin{barticle}[author]
\bauthor{\bsnm{Hajian},~\bfnm{Arshag~B}\binits{A.~B.}} \AND
  \bauthor{\bsnm{Kakutani},~\bfnm{Shizuo}\binits{S.}}
(\byear{1964}).
\btitle{Weakly wandering sets and invariant measures}.
\bjournal{Transactions of the American Mathematical Society}
\bvolume{110}
\bpages{136--151}.
\end{barticle}
\endbibitem

\bibitem{hamachi1981bernoulli}
\begin{barticle}[author]
\bauthor{\bsnm{Hamachi},~\bfnm{Toshihiro}\binits{T.}}
(\byear{1981}).
\btitle{On a Bernoulli shift with non-identical factor measures}.
\bjournal{Ergodic Theory and Dynamical Systems}
\bvolume{1}
\bpages{273--283}.
\end{barticle}
\endbibitem

\bibitem{hardin}
\begin{barticle}[author]
\bauthor{\bsnm{Hardin~Jr},~\bfnm{Clyde~D}\binits{C.~D.}}
(\byear{1982}).
\btitle{On the spectral representation of symmetric stable processes}.
\bjournal{Journal of Multivariate Analysis}
\bvolume{12}
\bpages{385--401}.
\end{barticle}
\endbibitem

\bibitem{hewitt}
\begin{bbook}[author]
\bauthor{\bsnm{Hewitt},~\bfnm{Edwin}\binits{E.}} \AND
  \bauthor{\bsnm{Ross},~\bfnm{Kenneth~A}\binits{K.~A.}}
(\byear{2013}).
\btitle{Abstract Harmonic Analysis: Volume II: Structure and Analysis for
  Compact Groups Analysis on Locally Compact Abelian Groups}
\bvolume{152}.
\bpublisher{Springer}.
\end{bbook}
\endbibitem

\bibitem{kakutani1948equivalence}
\begin{barticle}[author]
\bauthor{\bsnm{Kakutani},~\bfnm{Shizuo}\binits{S.}}
(\byear{1948}).
\btitle{On equivalence of infinite product measures}.
\bjournal{Annals of Mathematics}
\bpages{214--224}.
\end{barticle}
\endbibitem

\bibitem{kerr2016ergodic}
\begin{barticle}[author]
\bauthor{\bsnm{Kerr},~\bfnm{David}\binits{D.}} \AND
  \bauthor{\bsnm{Li},~\bfnm{Hanfeng}\binits{H.}}
(\byear{2016}).
\btitle{Ergodic theory}.
\bjournal{Springer Monographs in Mathematics. Springer, Cham}.
\end{barticle}
\endbibitem

\bibitem{komlos}
\begin{barticle}[author]
\bauthor{\bsnm{Komlos},~\bfnm{Janos}\binits{J.}}
(\byear{1967}).
\btitle{A generalization of a problem of Steinhaus}.
\bjournal{Acta Mathematica Academiae Scientiarum Hungaricae}
\bvolume{18}
\bpages{217--229}.
\end{barticle}
\endbibitem

\bibitem{kosloff2011}
\begin{barticle}[author]
\bauthor{\bsnm{Kosloff},~\bfnm{Zemer}\binits{Z.}}
(\byear{2011}).
\btitle{On a type III1 Bernoulli shift}.
\bjournal{Ergodic Theory and Dynamical Systems}
\bvolume{31}
\bpages{1727--1743}.
\end{barticle}
\endbibitem

\bibitem{kosloff2013}
\begin{barticle}[author]
\bauthor{\bsnm{Kosloff},~\bfnm{Zemer}\binits{Z.}}
(\byear{2013}).
\btitle{The zero-type property and mixing of Bernoulli shifts}.
\bjournal{Ergodic Theory and Dynamical Systems}
\bvolume{33}
\bpages{549--559}.
\end{barticle}
\endbibitem

\bibitem{kosloff2014}
\begin{barticle}[author]
\bauthor{\bsnm{Kosloff},~\bfnm{Zemer}\binits{Z.}}
(\byear{2014}).
\btitle{On the K property for Maharam extensions of Bernoulli shifts and a
  question of Krengel}.
\bjournal{Israel Journal of Mathematics}
\bvolume{199}
\bpages{485--506}.
\end{barticle}
\endbibitem

\bibitem{kosloffsoo2021}
\begin{barticle}[author]
\bauthor{\bsnm{Kosloff},~\bfnm{Z}\binits{Z.}} \AND
  \bauthor{\bsnm{Soo},~\bfnm{T}\binits{T.}}
(\byear{2021}).
\btitle{The orbital equivalence of bernoulli actions and their sinai factors}.
\bjournal{Journal of Modern Dynamics}
\bvolume{17}
\bpages{145--182}.
\end{barticle}
\endbibitem

\bibitem{krengel1967}
\begin{binproceedings}[author]
\bauthor{\bsnm{Krengel},~\bfnm{Ulrich}\binits{U.}}
(\byear{1967}).
\btitle{Classification of states for operators}.
In \bbooktitle{Proc. Fifth Berkeley Sympos. Math. Statist. and Probability
  (Berkeley, Calif., 1965/66)}
\bvolume{2}
\bpages{415--M29}.
\end{binproceedings}
\endbibitem

\bibitem{krengel1970}
\begin{bincollection}[author]
\bauthor{\bsnm{Krengel},~\bfnm{Ulrich}\binits{U.}}
(\byear{1970}).
\btitle{Transformations without finite invariant measure have finite strong
  generators}.
In \bbooktitle{Contributions to Ergodic Theory and Probability}
\bpages{133--157}.
\bpublisher{Springer}.
\end{bincollection}
\endbibitem

\bibitem{krengel1985}
\begin{bbook}[author]
\bauthor{\bsnm{Krengel},~\bfnm{Ulrich}\binits{U.}}
(\byear{1985}).
\btitle{Ergodic theorems}
\bvolume{6}.
\bpublisher{Walter de Gruyter}.
\end{bbook}
\endbibitem

\bibitem{kuelbs}
\begin{barticle}[author]
\bauthor{\bsnm{Kuelbs},~\bfnm{James}\binits{J.}}
(\byear{1973}).
\btitle{A representation theorem for symmetric stable processes and stable
  measures on H}.
\bjournal{Zeitschrift fur Wahrscheinlichkeitstheorie und verwandte Gebiete}
\bvolume{26}
\bpages{259--271}.
\end{barticle}
\endbibitem

\bibitem{lindenstrauss}
\begin{barticle}[author]
\bauthor{\bsnm{Lindenstrauss},~\bfnm{Elon}\binits{E.}}
(\byear{2001}).
\btitle{Pointwise theorems for amenable groups}.
\bjournal{Inventiones mathematicae}
\bvolume{146}
\bpages{259--295}.
\end{barticle}
\endbibitem

\bibitem{lorentz}
\begin{barticle}[author]
\bauthor{\bsnm{Lorentz},~\bfnm{George~Gunther}\binits{G.~G.}}
(\byear{1948}).
\btitle{A contribution to the theory of divergent sequences}.
\bjournal{Acta mathematica}
\bvolume{80}
\bpages{167--190}.
\end{barticle}
\endbibitem

\bibitem{maruyama}
\begin{barticle}[author]
\bauthor{\bsnm{Maruyama},~\bfnm{Gisiro}\binits{G.}}
(\byear{1970}).
\btitle{Infinitely divisible processes}.
\bjournal{Theory of Probability and Its Applications}
\bvolume{15}
\bpages{1--22}.
\end{barticle}
\endbibitem

\bibitem{mj2022}
\begin{barticle}[author]
\bauthor{\bsnm{Mj},~\bfnm{Mahan}\binits{M.}},
  \bauthor{\bsnm{Roy},~\bfnm{Parthanil}\binits{P.}} \AND
  \bauthor{\bsnm{Sarkar},~\bfnm{Sourav}\binits{S.}}
(\byear{2022}).
\btitle{Mixing Properties of Stable Random Fields Indexed by Amenable and
  Hyperbolic Groups}.
\bjournal{arXiv preprint arXiv:2205.15849}.
\end{barticle}
\endbibitem

\bibitem{namioka}
\begin{barticle}[author]
\bauthor{\bsnm{Namioka},~\bfnm{Isaac}\binits{I.}}
(\byear{1964}).
\btitle{Folner's conditions for amenable semi-groups}.
\bjournal{Mathematica Scandinavica}
\bvolume{15}
\bpages{18--28}.
\end{barticle}
\endbibitem

\bibitem{paterson}
\begin{bbook}[author]
\bauthor{\bsnm{Paterson},~\bfnm{Alan~LT}\binits{A.~L.}}
(\byear{2000}).
\btitle{Amenability}
\bvolume{29}.
\bpublisher{American Mathematical Soc.}
\end{bbook}
\endbibitem

\bibitem{petersen}
\begin{bbook}[author]
\bauthor{\bsnm{Petersen},~\bfnm{Karl~E.}\binits{K.~E.}}
(\byear{1983}).
\btitle{Ergodic Theory}.
\bpublisher{Cambridge University Press}.
\end{bbook}
\endbibitem

\bibitem{podgorski}
\begin{barticle}[author]
\bauthor{\bsnm{Podgorski},~\bfnm{Krzysztof}\binits{K.}}
(\byear{1992}).
\btitle{A note on ergodic symmetric stable processes}.
\bjournal{Stochastic processes and their applications}
\bvolume{43}
\bpages{355--362}.
\end{barticle}
\endbibitem

\bibitem{podwer}
\begin{bincollection}[author]
\bauthor{\bsnm{Podgorski},~\bfnm{Krzysztof}\binits{K.}} \AND
  \bauthor{\bsnm{Weron},~\bfnm{Aleksander}\binits{A.}}
(\byear{1991}).
\btitle{Characterizations of ergodic stationary stable processes via the
  dynamical functional}.
In \bbooktitle{Stable Processes and Related Topics}
\bpages{317--328}.
\bpublisher{Springer}.
\end{bincollection}
\endbibitem

\bibitem{rosinski1995}
\begin{barticle}[author]
\bauthor{\bsnm{Rosinski},~\bfnm{Jan}\binits{J.}}
(\byear{1995}).
\btitle{On the structure of stationary stable processes}.
\bjournal{The Annals of Probability}
\bpages{1163--1187}.
\end{barticle}
\endbibitem

\bibitem{rosinski2000}
\begin{barticle}[author]
\bauthor{\bsnm{Rosinski},~\bfnm{Jan}\binits{J.}}
(\byear{2000}).
\btitle{Decomposition of stationary $\alpha$-stable random fields}.
\bjournal{Annals of Probability}
\bpages{1797--1813}.
\end{barticle}
\endbibitem

\bibitem{rosinski2006}
\begin{barticle}[author]
\bauthor{\bsnm{Rosi{\'n}ski},~\bfnm{J}\binits{J.}}
(\byear{2006}).
\btitle{Minimal integral representations of stable processes}.
\bjournal{Probability and Mathematical Statistics}
\bvolume{26}.
\end{barticle}
\endbibitem

\bibitem{rossam}
\begin{barticle}[author]
\bauthor{\bsnm{Rosinski},~\bfnm{Jan}\binits{J.}} \AND
  \bauthor{\bsnm{Samorodnitsky},~\bfnm{Gennady}\binits{G.}}
(\byear{1996}).
\btitle{Classes of mixing stable processes}.
\bjournal{Bernoulli}
\bpages{365--377}.
\end{barticle}
\endbibitem

\bibitem{rosinskizak}
\begin{barticle}[author]
\bauthor{\bsnm{Rosinski},~\bfnm{Jan}\binits{J.}} \AND
  \bauthor{\bsnm{Zak},~\bfnm{Tomasz}\binits{T.}}
(\byear{1997}).
\btitle{The equivalence of ergodicity and weak mixing for infinitely divisible
  processes}.
\bjournal{Journal of Theoretical Probability}
\bvolume{10}
\bpages{73--86}.
\end{barticle}
\endbibitem

\bibitem{roy2007}
\begin{barticle}[author]
\bauthor{\bsnm{Roy},~\bfnm{Emmanuel}\binits{E.}}
(\byear{2007}).
\btitle{Ergodic properties of Poissonian ID processes}.
\bjournal{The Annals of Probability}
\bvolume{35}
\bpages{551--576}.
\end{barticle}
\endbibitem

\bibitem{roy2012}
\begin{barticle}[author]
\bauthor{\bsnm{Roy},~\bfnm{Emmanuel}\binits{E.}}
(\byear{2012}).
\btitle{Maharam extension and stationary stable processes}.
\bjournal{The Annals of Probability}
\bvolume{40}
\bpages{1357--1374}.
\end{barticle}
\endbibitem

\bibitem{roy2020}
\begin{barticle}[author]
\bauthor{\bsnm{Roy},~\bfnm{Parthanil}\binits{P.}}
(\byear{2020}).
\btitle{Group measure space construction, ergodicity and W*-rigidity for stable
  random fields}.
\bjournal{arXiv preprint arXiv:2007.14821}.
\end{barticle}
\endbibitem

\bibitem{roysam}
\begin{barticle}[author]
\bauthor{\bsnm{Roy},~\bfnm{Parthanil}\binits{P.}} \AND
  \bauthor{\bsnm{Samorodnitsky},~\bfnm{Gennady}\binits{G.}}
(\byear{2008}).
\btitle{Stationary symmetric alpha-stable discrete parameter random fields}.
\bjournal{Journal of Theoretical Probability}
\bvolume{21}
\bpages{212--233}.
\end{barticle}
\endbibitem

\bibitem{ryll}
\begin{binproceedings}[author]
\bauthor{\bsnm{Ryll-Nardzewski},~\bfnm{Czeslaw}\binits{C.}}
(\byear{1967}).
\btitle{On fixed points of semigroups of endomorphisms of linear spaces}.
In \bbooktitle{Proc. Fifth Berkeley Sympos. Math. Statist. and Probability}
\bpages{55--61}.
\end{binproceedings}
\endbibitem

\bibitem{samorodnitsky2005}
\begin{barticle}[author]
\bauthor{\bsnm{Samorodnitsky},~\bfnm{Gennady}\binits{G.}}
(\byear{2005}).
\btitle{Null flows, positive flows and the structure of stationary symmetric
  stable processes}.
\bjournal{The Annals of Probability}
\bvolume{33}
\bpages{1782--1803}.
\end{barticle}
\endbibitem

\bibitem{samorodnitsky2016}
\begin{bbook}[author]
\bauthor{\bsnm{Samorodnitsky},~\bfnm{Gennady}\binits{G.}}
(\byear{2016}).
\btitle{Stochastic processes and long range dependence}
\bvolume{26}.
\bpublisher{Springer}.
\end{bbook}
\endbibitem

\bibitem{samtakku}
\begin{bbook}[author]
\bauthor{\bsnm{Samorodnitsky},~\bfnm{Gennady}\binits{G.}} \AND
  \bauthor{\bsnm{Taqqu},~\bfnm{Murad~S}\binits{M.~S.}}
(\byear{1994}).
\btitle{Stable Non-Gaussian Random Processes: Stochastic Models with Infinite
  Variance: Stochastic Modeling}.
\bpublisher{Chapman and Hall, New York.}
\end{bbook}
\endbibitem

\bibitem{sarkaroy}
\begin{barticle}[author]
\bauthor{\bsnm{Sarkar},~\bfnm{Sourav}\binits{S.}} \AND
  \bauthor{\bsnm{Roy},~\bfnm{Parthanil}\binits{P.}}
(\byear{2018}).
\btitle{Stable random fields indexed by finitely generated free groups}.
\bjournal{The Annals of Probability}
\bvolume{46}
\bpages{2680--2714}.
\end{barticle}
\endbibitem

\bibitem{schilder}
\begin{barticle}[author]
\bauthor{\bsnm{Schilder},~\bfnm{Michael}\binits{M.}}
(\byear{1970}).
\btitle{Some structure theorems for the symmetric stable laws}.
\bjournal{The Annals of Mathematical Statistics}
\bvolume{41}
\bpages{412--421}.
\end{barticle}
\endbibitem

\bibitem{surgailis1993}
\begin{barticle}[author]
\bauthor{\bsnm{Surgailis},~\bfnm{Donatas}\binits{D.}},
  \bauthor{\bsnm{Rosinski},~\bfnm{Jan}\binits{J.}},
  \bauthor{\bsnm{Mandrekar},~\bfnm{V}\binits{V.}} \AND
  \bauthor{\bsnm{Cambanis},~\bfnm{Stamatis}\binits{S.}}
(\byear{1993}).
\btitle{Stable mixed moving averages}.
\bjournal{Probability Theory and Related Fields}
\bvolume{97}
\bpages{543--558}.
\end{barticle}
\endbibitem

\bibitem{takahashi}
\begin{binproceedings}[author]
\bauthor{\bsnm{Takahashi},~\bfnm{Wataru}\binits{W.}}
(\byear{1971}).
\btitle{Invariant functions for amenable semigroups of positive contractions on
  L1}.
In \bbooktitle{Kodai Mathematical Seminar Reports}
\bvolume{23}
\bpages{131--143}.
\bpublisher{Department of Mathematics, Tokyo Institute of Technology}.
\end{binproceedings}
\endbibitem

\bibitem{vaeswahl2018}
\begin{barticle}[author]
\bauthor{\bsnm{Vaes},~\bfnm{Stefaan}\binits{S.}} \AND
  \bauthor{\bsnm{Wahl},~\bfnm{Jonas}\binits{J.}}
(\byear{2018}).
\btitle{Bernoulli actions of type III1 and L2-cohomology}.
\bjournal{Geometric and Functional Analysis}
\bvolume{28}
\bpages{518--562}.
\end{barticle}
\endbibitem

\bibitem{wangroystoev}
\begin{barticle}[author]
\bauthor{\bsnm{Wang},~\bfnm{Yizao}\binits{Y.}},
  \bauthor{\bsnm{Roy},~\bfnm{Parthanil}\binits{P.}} \AND
  \bauthor{\bsnm{Stoev},~\bfnm{Stilian~A}\binits{S.~A.}}
(\byear{2013}).
\btitle{Ergodic properties of sum-and max-stable stationary random fields via
  null and positive group actions}.
\bjournal{The Annals of Probability}
\bvolume{41}
\bpages{206--228}.
\end{barticle}
\endbibitem

\bibitem{weiss}
\begin{barticle}[author]
\bauthor{\bsnm{Weiss},~\bfnm{Benjamin}\binits{B.}}
(\byear{2003}).
\btitle{Actions of amenable groups}.
\bjournal{Topics in dynamics and ergodic theory}
\bvolume{310}
\bpages{226--262}.
\end{barticle}
\endbibitem

\end{thebibliography}

\end{document}